\documentclass[11pt]{article}
\usepackage{hyperref}
\hypersetup{%
  colorlinks=true,
  linkcolor=black,
  citecolor=black,
  urlcolor=blue,
  pdftitle={The large diffusion limit for the heat equation in the exterior of the unit ball with a dynamical boundary condition},
  pdfauthor={Fila, Ishige, Kawakami, Lankeit},
  pdfkeywords={heat equation; dynamical boundary condition; large diffusion limit},
  bookmarksopen=false,
} 

\usepackage[truedimen,margin=30truemm, bottom=30truemm]{geometry}

\usepackage[T1]{fontenc}
\usepackage[utf8]{inputenc}
\usepackage{latexsym,amssymb}
\usepackage{color}
\usepackage{amsmath,amsthm}

\newtheorem{Theorem}{\bf Theorem}[section]
\newtheorem{Lemma}{\bf Lemma}[section]
\newtheorem{Proposition}{\bf Proposition}[section]
\newtheorem{Corollary}{\bf Corollary}[section]
\newtheorem{Remark}{\bf Remark}[section]
\newtheorem{Example}{\bf Example}[section]
\newtheorem{Definition}{\bf Definition}[section]

\newcommand{\Bbar}{\overline{B}}
\newcommand{\set}[1]{\{#1\}}
\newcommand{\f}[2]{\frac{#1}{#2}}
\DeclareMathOperator{\argmax}{argmax}

\newenvironment{theorem}{\begin{Theorem}$\!\!\!$}{\end{Theorem}}
\newenvironment{lemma}{\begin{Lemma}$\!\!\!$}{\end{Lemma}}

\newenvironment{corollary}{\begin{Corollary}$\!\!\!$}{\end{Corollary}}

\newenvironment{definition}{\begin{Definition}$\!\!\!$}{\end{Definition}}

\numberwithin{equation}{section}

\title{The large diffusion limit for the heat equation\\
in the exterior of the unit ball\\
with a dynamical boundary condition
}
\usepackage{authblk}

\author[1]{Marek Fila\footnote{e-mail: fila@fmph.uniba.sk}}
\author[2]{Kazuhiro Ishige\footnote{e-mail: ishige@ms.u-tokyo.ac.jp}}
\author[3]{Tatsuki Kawakami\footnote{e-mail: kawakami@math.ryukoku.ac.jp}}
\author[1,4]{Johannes Lankeit\footnote{e-mail: jlankeit@math.upb.de}}

\affil[1]{Department of Applied Mathematics and Statistics, Comenius University, 
Mlynsk\'a dolina, 84248 Bratislava, Slovakia}
\affil[2]{Graduate School of Mathematical Sciences, The University of Tokyo, 
3-8-1 Komaba, Meguro-ku, Tokyo 153-8914, Japan}
\affil[3]{Applied Mathematics and Informatics Course, Faculty of Advanced Science and Technology,
Ryukoku University, 1-5 Yokotani, Seta Oe-cho, Otsu, Shiga 520-2194, Japan}
\affil[4]{Institut f\"ur Mathematik, Universit\"at Paderborn, Warburger Str.~100, 33098 Paderborn, Germany}

\date{}

\begin{document}
\maketitle

\begin{abstract}
\noindent 
We study the heat equation in the exterior of the unit ball with a linear dynamical boundary condition. 
Our main aim is to find upper and lower bounds for the rate of convergence to solutions of the Laplace equation 
with the same dynamical boundary condition as the diffusion coefficient tends to infinity.\\
 \textbf{Keywords:} heat equation, dynamical boundary condition, large diffusion limit\\
 \textbf{MSC (2020):} 35K05, 35B40
\end{abstract}
\section{Introduction}
We consider the problem
\begin{equation}
\label{eq:1.1}
\left\{
\begin{array}{ll}
\displaystyle{\varepsilon\partial_tu_\varepsilon-\Delta u_\varepsilon=0}, &
x\in\Omega:=\{x\in{\mathbb R}^N:|x|>1\},\,\,\,t>0,\vspace{5pt}\\
\displaystyle{\partial_tu_\varepsilon+\partial_\nu u_\varepsilon=0}, & x\in\partial\Omega,\,\,\, t>0,\vspace{5pt}\\
\displaystyle{u_\varepsilon(x,0)=\varphi(x)},\qquad & x\in\Omega,\vspace{5pt}\\
\displaystyle{u_\varepsilon(x,0)=\varphi_b(x)},\qquad & x\in\partial\Omega,
\end{array}
\right.
\end{equation}
where $N\ge3$, $\Delta$ is the $N$-dimensional Laplacian (in $x$), 
$\nu$ is the exterior normal vector to $\partial \Omega$, 
$\partial_t:=\partial/\partial t$, $\partial_\nu:=\partial/\partial\nu$, 
and $(\varphi,\varphi_b)$ is a pair of measurable functions in $\Omega$ and $\partial\Omega$,
respectively.
Our aim is to study the convergence as $\varepsilon\to0$ of the solution $u_\varepsilon$ to
the solution~$u$ of the problem
\begin{equation}
\label{eq:1.2}
\left\{
\begin{array}{ll}
\displaystyle{\Delta u=0}, & x\in\Omega,\,\,\,t>0,\vspace{5pt}\\
\displaystyle{\partial_tu+\partial_\nu u=0}, & x\in\partial\Omega,\,\,\, t>0,\vspace{5pt}\\
\displaystyle{u(x,0)=\varphi_b(x)},\qquad & x\in\partial\Omega.
\end{array}
\right.
\end{equation}
For bounded domains this convergence was established in \cite{Gal} and for
the half-space $\Omega={\mathbb R}^N_+:={\mathbb R}^{N-1}\times{\mathbb R}_+$, $N\ge2$, in \cite{FIK02}. 
More recently, the following four theorems on the rate of this convergence have been proven in \cite{FIKL}.
\begin{theorem} 
\label{Theorem:1.1}
Let $\Omega={\mathbb R}^N_+$, $N\ge2$. Let $\varphi\in L^\infty(\Omega)$, $\varphi_b\in L^\infty(\partial\Omega)$,
$\mathfrak K\subset\overline{\Omega}$ compact and $0<\tau_1<\tau_2<\infty$. 
Then there exists $C>0$ such that 
$$
\sup_{\tau_1<t<\tau_2}\|u_\varepsilon(t)-u(t)\|_{L^\infty(\mathfrak K)}
\le C\varepsilon^{\frac{1}{2}},\qquad\varepsilon\in(0,1).
$$
\end{theorem}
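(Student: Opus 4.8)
\medskip
\noindent\emph{Sketch of a proof (plan).}

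The plan is to compare $u_\varepsilon$ with $u$ by means of an explicit barrier adapted to the dynamical boundary condition. Write $\Omega={\mathbb R}^N_+=\{(x',x_N):x'\in{\mathbb R}^{N-1},\,x_N>0\}$, so that $\partial_\nu=-\partial_{x_N}$ on $\partial\Omega=\{x_N=0\}$. A Fourier transform in $x'$ shows that \eqref{eq:1.2} is solved by the travelling representation $u(x,t)=(P_{x_N+t}\ast\varphi_b)(x')$, where $P_s$ is the Poisson kernel of ${\mathbb R}^N_+$ and $\ast$ denotes convolution in $x'$; in particular $u(\cdot,t)$ is harmonic for every $t$, $u(\cdot,0)=P_{x_N}\ast\varphi_b$ is the bounded harmonic extension of $\varphi_b$, and, since $\|\partial_sP_s\|_{L^1({\mathbb R}^{N-1})}=c_N/s$,
\[
|\partial_tu(x,t)|\le\frac{c_N\|\varphi_b\|_{L^\infty}}{x_N+t}\le\frac{c_N\|\varphi_b\|_{L^\infty}}{t},\qquad x\in\Omega,\ t>0 .
\]
First I would set $w_\varepsilon:=u_\varepsilon-u$. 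Using $\varepsilon\partial_tu_\varepsilon=\Delta u_\varepsilon$ and $\Delta u=0$, the function $w_\varepsilon$ solves a problem of the form \eqref{eq:1.1} with interior initial datum $\psi:=\varphi-u(\cdot,0)\in L^\infty(\Omega)$, boundary initial datum $0$, and the additional interior source $-\varepsilon\partial_tu$. By linearity I would split $w_\varepsilon=W_\varepsilon+\rho_\varepsilon$, where $W_\varepsilon$ solves the source-free problem with data $(\psi,0)$ and $\rho_\varepsilon$ the problem with zero data and source $-\varepsilon\partial_tu$.

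The heart of the argument is the estimate of $W_\varepsilon$ by comparison. Put $M:=\|\varphi\|_{L^\infty(\Omega)}+\|\varphi_b\|_{L^\infty(\partial\Omega)}$, so that $\|\psi\|_{L^\infty(\Omega)}\le M$, and consider
\[
\overline W_\varepsilon(x,t):=M\,\mathrm{erf}\!\left(\frac{x_N}{\sqrt{4t/\varepsilon}}\right)+2M\sqrt{\frac{\varepsilon t}{\pi}} .
\]
The first summand is an exact solution of $\varepsilon\partial_t-\partial_{x_N}^2$, is independent of $x'$, equals $M$ at $t=0$ for $x_N>0$, and vanishes on $\{x_N=0\}$; it is therefore compatible with the interior equation and the interior initial condition for $W_\varepsilon$, but \emph{not} with the dynamical boundary condition, since at $x_N=0$ it has vanishing time derivative while $\partial_\nu$ of it equals $-M\sqrt{\varepsilon/(\pi t)}$. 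The second summand is added precisely to repair this: being spatially constant it only adds the harmless term $\varepsilon\partial_t\bigl(2M\sqrt{\varepsilon t/\pi}\bigr)=\varepsilon M\sqrt{\varepsilon/(\pi t)}\ge0$ to the interior equation, it vanishes at $t=0$, and its time derivative $M\sqrt{\varepsilon/(\pi t)}$ cancels the boundary defect, so that $\partial_t\overline W_\varepsilon+\partial_\nu\overline W_\varepsilon=0$ on $\partial\Omega$. Hence $\overline W_\varepsilon$ is a bounded supersolution of the problem solved by the bounded function $W_\varepsilon$, and $-\overline W_\varepsilon$ a subsolution; by the comparison principle for bounded solutions of \eqref{eq:1.1}, $|W_\varepsilon|\le\overline W_\varepsilon$. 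Using $\mathrm{erf}(z)\le\tfrac{2}{\sqrt\pi}z$ and restricting to $x\in\mathfrak K$ (so $x_N\le R$ for some $R=R(\mathfrak K)$) and $t\in[\tau_1,\tau_2]$,
\[
|W_\varepsilon(x,t)|\le\overline W_\varepsilon(x,t)\le\left(\frac{MR}{\sqrt{\pi\tau_1}}+2M\sqrt{\frac{\tau_2}{\pi}}\right)\sqrt\varepsilon .
\]

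It remains to show that $\rho_\varepsilon$ is of lower order on $\mathfrak K\times[\tau_1,\tau_2]$. By Duhamel's formula, $\rho_\varepsilon(t)=-\int_0^t\mathcal S_\varepsilon(t-\sigma)\bigl(\partial_tu(\sigma),0\bigr)\,d\sigma$, where $\mathcal S_\varepsilon$ denotes the $L^\infty$-contractive solution operator of the source-free problem. For $\sigma\in[0,t/2]$ the rescaled time $(t-\sigma)/\varepsilon\ge\tau_1/(2\varepsilon)$ is large, and the barrier above also yields the smoothing estimate $\|\mathcal S_\varepsilon(\tau)(h,0)\|_{L^\infty(\mathfrak K)}\le C\sqrt\varepsilon\,\|h\|_{L^\infty(\Omega)}$ for $\tau\in[\tau_1/2,\tau_2]$; for $\sigma\in[t/2,t]$ one integrates by parts in $\sigma$ and exploits the harmonicity of $u$, reducing matters to the normal trace of $u$ on $\partial\Omega$ away from $t=0$. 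Combining this with $\|\partial_tu(\sigma)\|_{L^\infty}\le c_N\|\varphi_b\|_{L^\infty}\sigma^{-1}$ and a splitting at scale $\sigma\sim\varepsilon$ to absorb the singularity at the corner $(x_N,t)=(0,0)$ gives $\|\rho_\varepsilon(t)\|_{L^\infty(\mathfrak K)}=o(\sqrt\varepsilon)$ (in fact one expects the bound $C\,\varepsilon|\log\varepsilon|$); alternatively, $\rho_\varepsilon$ may be estimated directly through the Fourier transform in $x'$ and the symbol $\sqrt{|\xi|^2+\varepsilon\lambda}$ of the associated heat Dirichlet-to-Neumann operator. Adding the two estimates yields $\|u_\varepsilon(t)-u(t)\|_{L^\infty(\mathfrak K)}\le C\sqrt\varepsilon$ for $t\in[\tau_1,\tau_2]$ and $\varepsilon\in(0,1)$.

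I expect the main obstacle to be twofold. The conceptual point is to make the barrier consistent with the dynamical boundary condition: the naive Dirichlet-type barrier $M\,\mathrm{erf}(x_N/\sqrt{4t/\varepsilon})$ fails the second line of \eqref{eq:1.1}, and one has to discover the corrective term $2M\sqrt{\varepsilon t/\pi}$ — and it is exactly the fact that both summands of $\overline W_\varepsilon$ are $O(\sqrt\varepsilon)$ on $\mathfrak K\times[\tau_1,\tau_2]$ that produces the exponent $\tfrac12$. The technical point is the treatment of $\rho_\varepsilon$ near $t=0$: when $\varphi_b$ is merely bounded, $\partial_tu(\cdot,t)$ genuinely blows up like $t^{-1}$ as $t\to0^+$, so a crude Duhamel estimate diverges and one must use the parabolic smoothing of $\mathcal S_\varepsilon$ over the long rescaled horizon $\tau_1/\varepsilon$ (together with the harmonicity of $u$ and an integration by parts) in order to control this contribution.
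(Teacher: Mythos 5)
You should first note that this paper does not actually prove Theorem~\ref{Theorem:1.1}: it is quoted from \cite{FIKL}, so your sketch can only be measured against the method of that reference and against the analogous exterior-domain argument here (Theorem~\ref{Theorem:1.5}), where the rate comes from the decomposition $u_\varepsilon=v_\varepsilon+w_\varepsilon$ into a Dirichlet part and a boundary part and from $\int_0^t\|\nabla v_\varepsilon(s)\|_{L^\infty}\,ds\lesssim \varepsilon^{1/2}t^{1/2}$, with no singular interior source ever appearing. Your treatment of the source-free part $W_\varepsilon$ is correct and genuinely different in flavour: the function $M\,\mathrm{erf}\bigl(x_N/\sqrt{4t/\varepsilon}\bigr)+2M\sqrt{\varepsilon t/\pi}$ does solve $\varepsilon\partial_t-\Delta\ge 0$, satisfies $\partial_t+\partial_\nu=0$ on $\{x_N=0\}$, dominates the data $(\psi,0)$, and yields $|W_\varepsilon|\le C\sqrt\varepsilon$ on $\mathfrak K\times[\tau_1,\tau_2]$; modulo a careful invocation of a comparison principle for bounded solutions on the unbounded half-space (cf.\ \cite{vBD}), this is an attractive elementary substitute for the kernel computations of \cite{FIK02,FIKL}.

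The genuine gap is $\rho_\varepsilon$. With $\varphi_b$ merely bounded, $\|\partial_t u(\sigma)\|_{L^\infty}\sim\sigma^{-1}$ is not integrable at $\sigma=0$, and the tools you propose do not control the Duhamel integral $\int_0^t\mathcal S_\varepsilon(t-\sigma)(\partial_t u(\sigma),0)\,d\sigma$. Your smoothing bound $\|\mathcal S_\varepsilon(\tau)(h,0)\|_{L^\infty(\mathfrak K)}\le C\sqrt\varepsilon\|h\|_{L^\infty}$ (valid for $\tau\ge\tau_1/2$, by the same barrier) combined with the splitting at $\sigma\sim\varepsilon$ gives $\int_\varepsilon^{t/2}C\sqrt\varepsilon\,\sigma^{-1}\,d\sigma\sim\sqrt\varepsilon\,|\log\varepsilon|$, already worse than the target $C\sqrt\varepsilon$, while the contribution of $\sigma\in(0,\varepsilon)$ is simply divergent under these bounds: the singularity sits at $\sigma\to0^+$ uniformly in $\varepsilon$, so cutting at scale $\varepsilon$ does not remove it. Hence the asserted $o(\sqrt\varepsilon)$ (or $C\varepsilon|\log\varepsilon|$) for $\rho_\varepsilon$ is unsubstantiated. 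The integration by parts you invoke for $\sigma\in[t/2,t]$ is also not available as stated, since the pair $(u(\sigma),0)$ has a nonzero jump between its interior trace and its boundary component and so does not lie in the domain of the generator, and you have no estimate for $\partial_\tau\mathcal S_\varepsilon(\tau)$ on such data; in fact that region needs no integration by parts at all, because integrating your own barrier bound in $\sigma$ there already gives $O(\sqrt\varepsilon)$. To close the argument you would have to use the spatial localization $|\partial_t u(x,\sigma)|\le C(x_N+\sigma)^{-1}$ (the source is large only in a layer of width $\sigma$) together with finer mapping properties of $\mathcal S_\varepsilon$ on boundary-layer data---which is essentially the explicit kernel machinery of \cite{FIK02,FIKL}---or restructure the proof as in the present paper, subtracting the harmonic extension at the level of initial data and routing the coupling through the boundary term $-\partial_\nu v_\varepsilon$, so that the rate $\sqrt\varepsilon$ follows from $\|\nabla v_\varepsilon(s)\|_{L^\infty}\lesssim(\varepsilon^{-1}s)^{-1/2}$ and no $\sigma^{-1}$ singularity ever arises.
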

\begin{theorem}
\label{Theorem:1.2}
 Let $\Omega={\mathbb R}^3\setminus \overline{B_1(0)}$. 
 Let $\varphi\in L^\infty(\Omega)$ be radially symmetric such that 
$\sup_{\rho\ge 1} |\rho\varphi(\rho)|<\infty$. 
Assume further that $\varphi_b$ is constant,
$\varepsilon_0\in(0,\pi^{-1/2})$ and $0<\tau_1<\tau_2<\infty$. 
Then there exists $C>0$ such that 
$$
\sup_{\tau_1<t<\tau_2} \|u_\varepsilon(t)-u(t)\|_{L^\infty(\Omega)} 
\le C\varepsilon^{\frac{1}{2}},\qquad\varepsilon\in(0,\varepsilon_0).
$$
\end{theorem}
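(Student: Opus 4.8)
The plan is to use radial symmetry to reduce the problem to one space dimension, to control the boundary value of the solution by a Volterra estimate, and then to read off the interior bound from an explicit heat–kernel representation.

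\emph{Reduction.} By uniqueness and the radial symmetry of the data, $u_\varepsilon$ and $u$ are radial; writing $\rho=|x|\ge1$ and using $\Delta(\rho w)=\rho^{-1}\partial_\rho^2(\rho w)$ for radial $w$ in ${\mathbb R}^3$, the function $v_\varepsilon:=\rho u_\varepsilon$ solves $\varepsilon\partial_tv_\varepsilon=\partial_\rho^2v_\varepsilon$ on $\{\rho>1\}$. Since $\partial_\nu=-\partial_\rho$ on $\partial\Omega$, the dynamical condition becomes $\partial_tv_\varepsilon+v_\varepsilon-\partial_\rho v_\varepsilon=0$ at $\rho=1$, with $v_\varepsilon(\rho,0)=\rho\varphi(\rho)$ (bounded, since $\sup_{\rho\ge1}|\rho\varphi(\rho)|<\infty$) and $v_\varepsilon(1,0)=\varphi_b$. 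The same reduction applied to \eqref{eq:1.2}, together with the fact that $u_\varepsilon$ (hence its limit) vanishes at infinity, forces $u(x,t)=\varphi_b e^{-t}/|x|$, i.e. $v:=\rho u\equiv\varphi_b e^{-t}$. Thus $z_\varepsilon:=v_\varepsilon-v$ satisfies $\varepsilon\partial_tz_\varepsilon-\partial_\rho^2z_\varepsilon=\varepsilon\varphi_b e^{-t}$ on $\{\rho>1\}$, $\partial_tz_\varepsilon+z_\varepsilon-\partial_\rho z_\varepsilon=0$ at $\rho=1$, $z_\varepsilon(\rho,0)=\rho\varphi(\rho)-\varphi_b$ (bounded by some $M_0$) and $z_\varepsilon(1,0)=0$; and since $\|u_\varepsilon(t)-u(t)\|_{L^\infty(\Omega)}=\sup_{\rho>1}\rho^{-1}|z_\varepsilon(\rho,t)|$, it suffices to prove $|z_\varepsilon(\rho,t)|\le C\varepsilon^{1/2}\rho$ for $\rho>1$, $t\in[\tau_1,\tau_2]$.

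\emph{The boundary trace.} Put $h_\varepsilon(t):=v_\varepsilon(1,t)$, so that $h_\varepsilon'+h_\varepsilon=(\partial_\rho v_\varepsilon)(1,t)$ with $h_\varepsilon(0)=\varphi_b$. Representing $z_\varepsilon$ on $(1,\infty)$ through the one–dimensional Dirichlet heat kernel of diffusivity $1/\varepsilon$ in terms of its own trace $k_\varepsilon:=z_\varepsilon(1,\cdot)=h_\varepsilon-\varphi_b e^{-t}$, of its initial datum, and of the source, the Dirichlet–to–Neumann relation of the half-line heat equation gives $(\partial_\rho z_\varepsilon)(1,t)=-\tfrac{\sqrt\varepsilon}{\sqrt\pi}\int_0^t(t-s)^{-1/2}k_\varepsilon'(s)\,ds+g_\varepsilon(t)$, where (by a Gaussian moment computation) the remainder, accounting for the initial datum and the source, satisfies $|g_\varepsilon(t)|\le C\varepsilon^{1/2}t^{-1/2}$ on $(0,\tau_2]$. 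Hence $k_\varepsilon$, with $k_\varepsilon(0)=0$, solves the linear Volterra integro–differential equation $k_\varepsilon'+k_\varepsilon+\tfrac{\sqrt\varepsilon}{\sqrt\pi}\int_0^t(t-s)^{-1/2}k_\varepsilon'(s)\,ds=\Psi_\varepsilon(t)$ with $|\Psi_\varepsilon(t)|\le C\varepsilon^{1/2}t^{-1/2}$, a genuine $O(\varepsilon^{1/2})$ perturbation of $k_\varepsilon'+k_\varepsilon=0$. A weakly singular Gronwall argument (equivalently, a contraction/Neumann–series argument for this kernel, for which a smallness condition on $\varepsilon_0$ met by $\varepsilon_0<\pi^{-1/2}$ is used) then yields $\sup_{[0,\tau_2]}|z_\varepsilon(1,t)|=\sup_{[0,\tau_2]}|k_\varepsilon(t)|\le C\varepsilon^{1/2}$.

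\emph{Interior estimate.} Knowing the trace, $z_\varepsilon$ is the unique solution on $(1,\infty)$ of $\varepsilon\partial_tz-\partial_\rho^2z=\varepsilon\varphi_b e^{-t}$ with $z(1,t)=k_\varepsilon(t)$ and $z(\rho,0)=\rho\varphi(\rho)-\varphi_b$, and so splits into a Poisson integral of $k_\varepsilon$, an image–kernel integral of the initial datum, and a Duhamel integral of the source. Using $\int_0^t P_\varepsilon(\rho-1,t-s)\,ds=\mathrm{erfc}\!\big(\tfrac{(\rho-1)\sqrt\varepsilon}{2\sqrt t}\big)$ for the Poisson kernel $P_\varepsilon$, the first piece is $\le\sup_s|k_\varepsilon(s)|\le C\varepsilon^{1/2}$; since the reflected heat kernel on $(1,\infty)$ integrates to $\mathrm{erf}\!\big(\tfrac{(\rho-1)\sqrt\varepsilon}{2\sqrt t}\big)$, the second piece is at most $M_0\,\mathrm{erf}\!\big(\tfrac{(\rho-1)\sqrt\varepsilon}{2\sqrt t}\big)\le\tfrac{M_0}{\sqrt{\pi\tau_1}}(\rho-1)\varepsilon^{1/2}$ for $t\ge\tau_1$ — this is the step that uses $\tau_1>0$ and produces the $\varepsilon^{1/2}$–rate; and the third piece is $\le|\varphi_b|\int_0^t e^{-s}\,\mathrm{erf}\!\big(\tfrac{(\rho-1)\sqrt\varepsilon}{2\sqrt{t-s}}\big)\,ds\le C(\rho-1)\varepsilon^{1/2}$. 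Adding the three bounds and dividing by $\rho$ gives $\|u_\varepsilon(t)-u(t)\|_{L^\infty(\Omega)}\le C\varepsilon^{1/2}$ on $[\tau_1,\tau_2]$.

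\emph{Main obstacle.} The delicate point is the trace estimate: the Wentzell/dynamical boundary condition couples with the nonlocal Dirichlet–to–Neumann operator of the half-line heat equation into a Volterra equation with a $t^{-1/2}$ kernel, and, while for $t$ bounded away from $0$ this is a harmless $O(\varepsilon^{1/2})$ perturbation of $k_\varepsilon'+k_\varepsilon=0$, near $t=0$ both the kernel and the forcing (the latter recording the incompatibility between $\varphi$ and $\varphi_b$ at $\rho=1$) are singular, so keeping the final bound at the optimal order $\varepsilon^{1/2}$ uniformly on $[0,\tau_2]$ requires a careful quantitative fixed-point estimate; this is exactly where the restriction $\varepsilon_0<\pi^{-1/2}$ is invoked, and it is also the reason a more pedestrian route — bounding $z_\varepsilon$ directly by the comparison principle for the dynamical-boundary problem — seems to fail, since any barrier matching the $O(1)$ initial datum generates an $O(\varepsilon^{1/2}t^{-1/2})$ boundary flux that a bounded-flux correction cannot absorb near $t=0$.
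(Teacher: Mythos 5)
Your proposal is correct in substance, but note first that the present paper never proves Theorem~\ref{Theorem:1.2}: it is quoted from \cite{FIKL}, and the closest proof contained here is the general machinery of Theorem~\ref{Theorem:1.5} and Corollary~\ref{Corollary:1.1} (the decomposition $u_\varepsilon=v_\varepsilon+w_\varepsilon$ of Definition~\ref{Definition:1.1}, kernel estimates for $\Gamma_D$ and $\mathcal{K}$, and a contraction in the weighted space $X_{T,L}$), which for $N=3$, $\alpha=1$ and constant $\varphi_b$ recovers the $\varepsilon^{1/2}$ bound for all $\varepsilon\in(0,1)$ and without radial symmetry. Your route is genuinely different from that machinery: radial reduction $v=\rho u$ to a half-line problem, explicit identification of $u$, a scalar Volterra equation for the boundary trace obtained from the half-line Dirichlet-to-Neumann operator, and explicit $\mathrm{erf}/\mathrm{erfc}$ bounds for the interior; this is elementary and explicit (and close in spirit to the special-case proof in \cite{FIKL}, which is where the constant $\pi^{-1/2}$ originates), whereas the paper's approach trades explicitness for generality ($N\ge3$, no symmetry, nonconstant $\varphi_b\in C^{1,\theta}$). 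Three points should be tightened. (1) Identify $u$ by definition as $S_2(t)\varphi_b$, which by Lemma~\ref{Lemma:2.1} equals $\varphi_b e^{-t}|x|^{-1}$; problem \eqref{eq:1.2} alone does not determine $u$ uniquely (for instance $u\equiv\varphi_b$ also solves it), so the appeal to the limit of $u_\varepsilon$ vanishing at infinity is circular and unnecessary. (2) In the trace step, the nonsingular term coming from $+k_\varepsilon$ (equivalently $\int_0^t k_\varepsilon'$) has size $O(1)$ on $[0,\tau_2]$ no matter how small $\varepsilon$ is, so the parenthetical Neumann-series-by-smallness variant does not close uniformly in $\tau_2$; the weakly singular (Henry-type) Gronwall inequality you name first, or a Bielecki exponential weight on $\sup_t t^{1/2}e^{-Lt}|k_\varepsilon'(t)|$, does close, with a constant depending only on $\tau_2$, and in fact requires no smallness of $\varepsilon_0$ — the hypothesis $\varepsilon_0<\pi^{-1/2}$ is simply inherited from the explicit computation in \cite{FIKL}. (3) You implicitly use that $k_\varepsilon(0)=0$, that $k_\varepsilon$ is $C^1$ for $t>0$ with $k_\varepsilon'$ integrable at $t=0$, and uniqueness of the half-line Dirichlet problem in a class of functions of at most linear growth to justify the representation formula and the flux identity; these hold for the solutions constructed in \cite{FIKL} and here, but deserve explicit mention.
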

The upper bounds from Theorems~\ref{Theorem:1.1} and \ref{Theorem:1.2} are sharp.

\begin{theorem}
\label{Theorem:1.3}
Let $\Omega={\mathbb R}^N_+$, $N\ge 2$ and $\varphi_b\equiv 0$. 
Then there exist $\varphi\in L^\infty(\Omega)$ and a compact set 
$\mathfrak K\subset {\mathbb R}^N_+\times(0,\infty)$
such that
$$ 
u_\varepsilon(x,t)-u(x,t) =  u_\varepsilon(x,t) \ge c\varepsilon^{\frac{1}{2}},
\qquad \varepsilon\in(0,\varepsilon_0),\quad(x,t)\in \mathfrak K,
$$
for some $\varepsilon_0>0$ and $c>0$.
\end{theorem}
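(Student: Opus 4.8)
The plan is as follows. Since $\varphi_b\equiv0$, the function $u\equiv0$ solves \eqref{eq:1.2}; hence, by uniqueness for \eqref{eq:1.2}, $u\equiv0$, and it suffices to exhibit $\varphi\in L^\infty(\Omega)$, a compact set $\mathfrak K\subset{\mathbb R}^N_+\times(0,\infty)$, and $\varepsilon_0>0$, $c>0$ with $u_\varepsilon\ge c\varepsilon^{1/2}$ on $\mathfrak K$ for all $\varepsilon\in(0,\varepsilon_0)$. I will take $\varphi\equiv1$ and estimate $u_\varepsilon$ from below by the solution of the \emph{Dirichlet} heat problem in the half-space, which is explicit and, on a fixed compact set, of size $\varepsilon^{1/2}$ as $\varepsilon\to0$.

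Write $x=(x',x_N)\in{\mathbb R}^{N-1}\times(0,\infty)$, so that $\partial_\nu=-\partial_{x_N}$ on $\partial\Omega=\{x_N=0\}$, and set
\[
w_\varepsilon(x,t):=\mathrm{erf}\!\left(\frac{x_N}{2}\sqrt{\frac{\varepsilon}{t}}\right),\qquad
\mathrm{erf}(z):=\frac{2}{\sqrt\pi}\int_0^z e^{-r^2}\,dr .
\]
Then $0\le w_\varepsilon\le1$, and $w_\varepsilon$ solves $\varepsilon\partial_tw_\varepsilon-\Delta w_\varepsilon=0$ in $\Omega$, $w_\varepsilon=0$ on $\partial\Omega$, $w_\varepsilon(\cdot,0)=1$ in $\Omega$ (it is a function of $x_N$ and $t/\varepsilon$ only). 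I claim $w_\varepsilon$ is a subsolution of \eqref{eq:1.1} with $\varphi\equiv1$, $\varphi_b\equiv0$: it satisfies the interior equation with equality; on $\partial\Omega$ one computes $\partial_{x_N}w_\varepsilon(x',0,t)=(\pi t/\varepsilon)^{-1/2}>0$ and $\partial_tw_\varepsilon(x',0,t)=0$, hence $\partial_tw_\varepsilon+\partial_\nu w_\varepsilon=-(\pi t/\varepsilon)^{-1/2}<0$ on $\partial\Omega$; and $w_\varepsilon(x,0)=1=\varphi(x)$ for $x\in\Omega$, $w_\varepsilon(x',0,0)=0=\varphi_b$. Since $u_\varepsilon$ is a bounded solution of \eqref{eq:1.1} ($0\le u_\varepsilon\le1$ by comparison with the steady states $0$ and $1$), the comparison principle for \eqref{eq:1.1}, applied in the class of bounded functions so as to rule out escape of mass at spatial infinity, yields $u_\varepsilon\ge w_\varepsilon$ in $\Omega\times(0,\infty)$.

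It remains to bound $w_\varepsilon$ from below on a compact set. Choose
\[
\mathfrak K:=\{x\in{\mathbb R}^N:\ 1\le x_N\le2,\ |x'|\le1\}\times[1,2]\subset{\mathbb R}^N_+\times(0,\infty).
\]
For $(x,t)\in\mathfrak K$ the argument $z:=\frac{x_N}{2}\sqrt{\varepsilon/t}$ satisfies $\tfrac1{2\sqrt2}\,\varepsilon^{1/2}\le z\le\varepsilon^{1/2}$; moreover $\mathrm{erf}(z)\ge\frac{2}{\sqrt\pi}ze^{-z^2}\ge\frac{2e^{-1}}{\sqrt\pi}z$ for $0<z\le1$. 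Hence for any $\varepsilon_0\in(0,1]$, all $\varepsilon\in(0,\varepsilon_0)$ and all $(x,t)\in\mathfrak K$,
\[
u_\varepsilon(x,t)-u(x,t)=u_\varepsilon(x,t)\ge w_\varepsilon(x,t)=\mathrm{erf}(z)\ge\frac{2e^{-1}}{\sqrt\pi}\,z\ge\frac{e^{-1}}{\sqrt{2\pi}}\,\varepsilon^{1/2},
\]
which is the assertion with $c=e^{-1}/\sqrt{2\pi}$ (and any $\varepsilon_0\in(0,1]$).

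The only delicate point is the comparison inequality $u_\varepsilon\ge w_\varepsilon$: the chosen $\varphi\equiv1$ is not continuous at the corner $\{x_N=0\}\cap\{t=0\}$, so one should either invoke the comparison/uniqueness theory for \eqref{eq:1.1} with bounded measurable data (which underlies the analysis in \cite{FIK02,FIKL}) or argue by approximation, replacing $\varphi$ by continuous data $\varphi_\delta$ with $0\le\varphi_\delta\le1$, $\varphi_\delta\equiv0$ near $\partial\Omega$ and $\varphi_\delta\nearrow1$, together with the solutions $w_{\varepsilon,\delta}$ of the corresponding Dirichlet problems (to which the subsolution argument applies without corner issues), and then letting $\delta\to0$ to recover $u_\varepsilon\ge w_\varepsilon$. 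I expect this to be the main, though essentially routine, technical obstacle; all remaining steps are explicit.
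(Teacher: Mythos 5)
Your argument is correct, and it is essentially the same strategy that this paper uses in Section~4 to prove the exterior-ball analogue, Theorem~\ref{Theorem:1.6} (Theorem~\ref{Theorem:1.3} itself is only quoted here from \cite{FIKL}): take the solution of the \emph{Dirichlet} heat problem with the rescaled time $\varepsilon^{-1}t$, observe that nonnegativity plus the zero boundary value force $\partial_\nu\le 0$ on $\partial\Omega$, so that the rescaled function is a subsolution of \eqref{eq:1.1}, invoke the comparison principle of \cite{vBD}, and then bound the Dirichlet solution from below on a compact set. The one place where you diverge is the last step: in the half-space the Dirichlet solution for $\varphi\equiv 1$ is the explicit error function, which immediately yields the $\varepsilon^{1/2}$ rate with an explicit constant, whereas for the exterior ball the paper must instead use the Grigor'yan--Saloff-Coste two-sided heat kernel bounds (Lemma~\ref{Lemma:4.2}) and a decaying $\varphi$ compatible with \eqref{eq:1.15}. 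The only genuine loose end in your write-up is the corner incompatibility $\varphi\equiv1$ versus $\varphi_b\equiv0$ at $\{x_N=0\}\times\{t=0\}$, which you correctly flag; note that the paper's own construction sidesteps exactly this by taking $\varphi=|x|^{2-N}\chi_{\{|x|>b\}}$ with $b>1$, i.e.\ initial data vanishing near $\partial\Omega$. You could do the same (e.g.\ $\varphi=\chi_{\{x_N>1\}}$, whose Dirichlet solution is still explicit and still of order $\varepsilon^{1/2}$ on your compact set) and avoid the approximation argument altogether; otherwise your $\delta$-approximation is a routine fix. Either way the proof goes through.
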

\begin{theorem}
\label{Theorem:1.4}
Let $\Omega={\mathbb R}^3\setminus \overline{B_1(0)}$ and $\varphi_b\equiv 0$.
Then there exist a radially symmetric $\varphi\in L^\infty(\Omega)$
satisfying $\sup_{\rho\ge 1} |\rho\varphi(\rho)|<\infty$ 
and a compact set 
$\mathfrak K\subset ({\mathbb R}^3\setminus B_1(0) )\times(0,\infty)$ 
such that 
$$
 u_\varepsilon(x,t) - u(x,t) = u_\varepsilon(x,t) \ge c\varepsilon^{\frac{1}{2}},
\qquad\varepsilon\in(0,\varepsilon_0),\quad(x,t)\in \mathfrak K,
$$
for some $\varepsilon_0>0$ and $c>0$.
\end{theorem}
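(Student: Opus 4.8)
\emph{Proof strategy.}

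The plan is to reduce the problem to one space dimension, to observe that the limit equation forces $u\equiv0$, and then to obtain the lower bound for $u_\varepsilon$ alone by comparison with an explicitly solvable Dirichlet problem. For radially symmetric functions on $\Omega={\mathbb R}^3\setminus\overline{B_1(0)}$ the substitution $v(\rho,t)=\rho u(\rho,t)$, $\rho=|x|$, converts $\Delta u$ into $\rho^{-1}\partial_\rho^2 v$; writing $v_\varepsilon=\rho u_\varepsilon$, the bulk equation in \eqref{eq:1.1} becomes $\varepsilon\partial_t v_\varepsilon=\partial_\rho^2 v_\varepsilon$ for $\rho>1$, $t>0$, and the dynamical boundary condition becomes $\partial_t v_\varepsilon-\partial_\rho v_\varepsilon+v_\varepsilon=0$ at $\rho=1$. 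Since $\varphi_b\equiv0$ is constant (hence radial), the solution $u$ of \eqref{eq:1.2} is radial, harmonic and tends to $0$ at infinity, so $u=A(t)/\rho$; plugging into the boundary condition gives $A'+A=0$ with $A(0)=0$, whence $u\equiv0$. It therefore suffices to find $\varphi$ and $\mathfrak K$ so that $u_\varepsilon\ge c\varepsilon^{1/2}$ on $\mathfrak K$.

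I would take $\varphi(x):=|x|^{-1}$, which is radially symmetric, belongs to $L^\infty(\Omega)$ and satisfies $\sup_{\rho\ge1}|\rho\varphi(\rho)|=1<\infty$. The reason for this non-decaying profile is that then $v_\varepsilon(\rho,0)=\rho\varphi(\rho)\equiv1$, so under the parabolic rescaling $s=t/\varepsilon$ the equation for $v_\varepsilon$ becomes the heat equation with initial datum identically $1$, while the boundary condition degenerates, at leading order in $\varepsilon$, to the homogeneous Dirichlet condition ($v_\varepsilon(1,\cdot)$ stays at its initial value $\varphi_b=0$). The corresponding Dirichlet solution is $\operatorname{erf}\big((\rho-1)/(2\sqrt s)\big)$, which for fixed $\rho>1$ is of size $\varepsilon^{1/2}$ after undoing $s=t/\varepsilon$; had $\rho\varphi(\rho)$ decayed, this leading contribution would be of strictly higher order in $\varepsilon$, which is exactly why a non-decaying datum is needed.

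To turn this into a proof I would argue by comparison. As $(\varphi,\varphi_b)=(|x|^{-1},0)\ge0$, the maximum principle for \eqref{eq:1.1} gives $u_\varepsilon\ge0$ on $\overline\Omega\times(0,\infty)$; in particular the boundary trace satisfies $u_\varepsilon(x,t)\ge0$ for $|x|=1$, $t>0$. Let $\underline u_\varepsilon$ solve $\varepsilon\partial_t\underline u_\varepsilon=\Delta\underline u_\varepsilon$ in $\Omega\times(0,\infty)$ with $\underline u_\varepsilon=0$ on $\partial\Omega$ and $\underline u_\varepsilon(\cdot,0)=|x|^{-1}$; using $v=\rho u$ and $s=t/\varepsilon$ one finds explicitly
$$
\underline u_\varepsilon(x,t)=\frac{1}{|x|}\operatorname{erf}\!\left(\frac{|x|-1}{2}\sqrt{\frac{\varepsilon}{t}}\right),\qquad |x|>1,\ t>0 .
$$
Then $w:=u_\varepsilon-\underline u_\varepsilon$ solves the heat equation in $\Omega\times(0,\infty)$, vanishes at $t=0$, equals $u_\varepsilon(x,t)\ge0$ on $\partial\Omega$, and grows at most linearly in $|x|$ (because $u_\varepsilon$ is bounded and $0\le\underline u_\varepsilon\le1$); the comparison principle in the Tychonoff uniqueness class then yields $u_\varepsilon\ge\underline u_\varepsilon$ throughout $\Omega\times(0,\infty)$. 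Finally I would fix, say, $\mathfrak K:=\{(x,t):2\le|x|\le3,\ 1\le t\le2\}$: for $\varepsilon\in(0,1)$ the argument of $\operatorname{erf}$ on $\mathfrak K$ lies in $(0,1)$, so the elementary bound $\operatorname{erf}(z)\ge\frac{2}{\sqrt\pi}e^{-1}z$ for $z\in[0,1]$ gives $\underline u_\varepsilon(x,t)\ge c\varepsilon^{1/2}$ on $\mathfrak K$ for a suitable $c>0$, and since $u\equiv0$ this is the assertion with $\varepsilon_0=1$.

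The genuine difficulty is conceptual: one must recognise that the $\varepsilon^{1/2}$ rate is not caused by the initial layer near $t=0$ but by the slow, $O(\varepsilon^{1/2})$, diffusive exchange between the ``reservoir at infinity'' encoded in the non-decay of $\rho\varphi$ and the boundary, and that at this order the dynamical boundary condition behaves like a homogeneous Dirichlet condition; the very same mechanism underlies Theorem~\ref{Theorem:1.3} (with $\varphi\equiv1$ in the half-space). Once $\varphi$ is chosen, the only points that need a little care are the positivity of the boundary trace of $u_\varepsilon$ — a consequence of the maximum principle for \eqref{eq:1.1} — and the admissibility of the comparison argument for functions of linear growth at infinity; both are standard and available in the framework of \cite{FIKL}.
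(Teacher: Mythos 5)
Your proposal is correct. Note first that this paper does not reprove Theorem~\ref{Theorem:1.4} (it is quoted from \cite{FIKL}); its in-paper counterpart is Theorem~\ref{Theorem:1.6}, proved in Section~4, which for $N=3$ gives the same $\varepsilon^{1/2}$ lower bound. Your argument follows the same overall strategy — $u\equiv0$ for the limit problem, choose $\varphi$ with $|x|\varphi(x)$ non-decaying, bound $u_\varepsilon$ from below by the solution of the pure Dirichlet problem with the same data, evaluated at time $\varepsilon^{-1}t$ — but differs in two places. For the comparison step the paper (Lemma~\ref{Lemma:4.1}) shows that $\underline u_\varepsilon(x,t)=z(x,\varepsilon^{-1}t)$ is a subsolution of the dynamical-boundary problem itself, since $\partial_t\underline u_\varepsilon+\partial_\nu\underline u_\varepsilon=\partial_\nu z\le0$ on $\partial\Omega$, and invokes the comparison principle of \cite{vBD}; you instead compare within the Dirichlet problem, using the nonnegativity of the boundary trace of $u_\varepsilon$ (itself obtained from the same comparison principle) plus a Phragm\'en--Lindel\"of argument for bounded solutions — both routes are sound. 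For the lower bound on the Dirichlet evolution, the paper (Lemma~\ref{Lemma:4.2}) takes $\varphi=|x|^{2-N}\chi_{\{|x|>b\}}$ and uses the Gaussian lower bound for the exterior Dirichlet heat kernel from \cite{GS} to get $z(x,t)\ge Ct^{-N/2+1}$, an argument valid for all $N\ge3$; you exploit the special structure of $N=3$ (the substitution $v=\rho z$ reducing to the half-line heat equation) to write $z$ explicitly in terms of the error function and read off the rate with explicit constants. Your version is more elementary and self-contained for $N=3$ but does not generalize to higher dimensions, which is what the paper's kernel-estimate approach buys. The small verifications you defer — nonnegativity of $u_\varepsilon$ and admissibility of the comparison for bounded $w=u_\varepsilon-\underline u_\varepsilon$ (it is in fact bounded, not merely of linear growth, by Theorem~\ref{Theorem:1.5}(i)) — are indeed standard.
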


We see that for the half-space the rate does not depend on the dimension,
and we obtain the same rate $\varepsilon^{1/2}$ also for the exterior of a ball in ${\mathbb R}^3$,
which is a very different domain. 
The main motivation of this paper is the natural question whether or not other rates may occur. 
We show that for ${\mathbb R}^N\setminus \overline{B_1(0)}$ the rate depends on $N$.
\vspace{5pt}

Before we formulate our main results,
we introduce some notation. 
Let $\Gamma_D=\Gamma_D(x,y,t)$ be the Dirichlet heat kernel on $\Omega$.
Define
$$
[S_1(t)\phi](x):=\int_{\Omega}\Gamma_D(x,y,t)\phi(y)\, dy,
\qquad x\in\overline{\Omega},\quad t>0,
$$
for any measurable function $\phi$ in $\Omega$.
Let $P=P(x,y)$ be the Poisson kernel on $B=B(0,1):=\{x\in {\mathbb R}^N:|x|<1\}$, that is 
$$
P(x,y):=c_N\frac{1-|x|^2}{|x-y|^N},\qquad x\in \Bbar,\quad y\in\partial B\setminus\set{x},
$$
where $c_N$ is a constant to be chosen such that 
$\|P(x,\cdot)\|_{L^1(\partial B)}=1$ for $x\in B$ 
(see (2.28) in \cite{GT}). 
Then $P=P(x,y)$ satisfies as a function of $x$  
\begin{equation}\label{eq:1.3}
-\Delta_x P=0\quad\mbox{in}\quad B,
\qquad 
P(x,y)=\delta_y\quad\mbox{on}\quad\partial B,
\end{equation}
where $\delta_y$ is the Dirac measure on $\partial B=\partial\Omega$ at $y$. 
We denote by $K=K(x,y)$ the Kelvin transform of $P$ as a function of $x$ with respect to $B$, that is 
\begin{equation}
\label{eq:1.4}
K(x,y):=|x|^{-(N-2)}P\left(\frac{x}{|x|^2},y\right),
\qquad x\in\overline{\Omega},
\quad y\in\partial\Omega\setminus\set{x}.
\end{equation}
Set 
\begin{equation}
\label{eq:1.5}
\mathcal{K}(x,y,t):=K(e^tx,y),
\qquad x\in\overline{\Omega},
\quad y\in\partial\Omega,\quad t\ge 0,\quad e^tx\neq y. 
\end{equation} 
Then it follows from \eqref{eq:1.3} and \eqref{eq:1.4} that 
${\mathcal K}={\mathcal K}(x,y,t)$ as a function of $x$ and $t$ satisfies 
$$
\left\{
\begin{array}{ll}
\displaystyle{-\Delta_x {\mathcal K}}=0 & \mbox{in}\quad\Omega\times(0,\infty),\vspace{5pt}\\
\displaystyle{\partial_t{\mathcal K}+\partial_\nu {\mathcal K}}=0 & \mbox{on}\quad\partial\Omega\times(0,\infty),\vspace{5pt}\\
\displaystyle{{\mathcal K}(\cdot,y,0)=\delta_y} & \mbox{on}\quad\partial\Omega.
\end{array}
\right.
$$
For any nonnegative measurable function $\psi$ on $\partial\Omega$ and $t>0$,
we define 
\begin{equation}
\label{eq:1.6}
[S_2(t)\psi](x):=\int_{\partial\Omega}{\mathcal K}(x,y,t)\psi(y)\,d\sigma_y
\equiv \int_{\partial\Omega}K(e^tx,y)\psi(y)\,d\sigma_y,
\qquad x\in\overline{\Omega}.
 \end{equation}

We formulate the definition of a solution of \eqref{eq:1.1} 
by the use of the two integral kernels $\Gamma_D$ and $\mathcal K$.  
For simplicity, let $\varphi_b=\varphi_b(x)$ and $g=g(x,t)$ be continuous functions 
in $\partial\Omega$ and $\partial\Omega\times(0,\infty)$, respectively. 
Then the function
\begin{equation}  
\label{eq:1.7}
 w(x,t)=w(x',x_N,t)
 :=[S_2(t)\varphi_b](x)+\int_0^t[S_2(t-s)g(s)](x)\,ds
\end{equation}
can be defined for $x\in\Omega$ and $t>0$,
and it is a classical solution of the Cauchy problem for the Laplace equation with a nonhomogeneous dynamical boundary condition
\begin{equation}
\label{eq:1.8}
\left\{
\begin{array}{ll}
\displaystyle{-\Delta w=0},
& x\in\Omega,\quad t>0,\vspace{5pt}\\
\displaystyle{\partial_tw+\partial_\nu w=g},
& x\in\partial\Omega,\quad t>0,\vspace{5pt}\\
\displaystyle{w(x,0)=\varphi_b(x)},\quad
&x\in\partial\Omega.
\end{array}
\right.
\end{equation}
It follows from \eqref{eq:1.6} and \eqref{eq:1.7} that
\begin{equation}
\label{eq:1.9}
\begin{split}
\partial_tw(x,t):= & \,\,\int_{\partial\Omega}\partial_t\mathcal K(x,y,t)\varphi_b(y)\,d\sigma_y
+\int_{\partial\Omega}K(x,y)g(y,t)\,d\sigma_y\\
 & \,\,\,\,\,\,
+\int_0^t\int_{\partial\Omega}\partial_t\mathcal K(x,y,t-s)g(y,s)\,d\sigma_y\,ds, \qquad x\in\Omega,\quad t\in(0,T).
\end{split}
\end{equation}
Set 
\begin{equation}
\label{eq:1.10}
\Phi(x):=\varphi(x)-[S_2(0)\varphi_b](x),\qquad x\in\Omega. 
\end{equation}
Then the function  
$$
v_\varepsilon(x,t)
:=[S_1(\varepsilon^{-1}t)\Phi](x)
-\int_0^t[S_1(\varepsilon^{-1}(t-s))\partial_t w(s)](x)\, ds,\qquad x\in\Omega,\quad t\ge 0,
$$
satisfies 
\begin{equation}
\label{eq:1.11}
\left\{
\begin{array}{ll}
\displaystyle{\varepsilon \partial_tv_\varepsilon=\Delta v_\varepsilon -\varepsilon\partial_t w
},\quad
& x\in\Omega,\quad t>0,\vspace{5pt}\\
\displaystyle{v_\varepsilon=0},
& x\in\partial\Omega,\quad t>0,\vspace{5pt}\\
\displaystyle{v_\varepsilon(x,0)=\Phi(x)},
&x\in\Omega.
\end{array}
\right.
\end{equation}
If $g_\varepsilon(x,t):=-\partial_{\nu} v_\varepsilon(x,t)$ for $x\in\partial\Omega$, $t>0$, and
$w_\varepsilon$ is defined as in \eqref{eq:1.7} with $g_\varepsilon$ instead of $g$,
then it follows from \eqref{eq:1.8}, \eqref{eq:1.9} and \eqref{eq:1.11} that
\begin{equation}
\label{eq:1.12}
\left\{
\begin{array}{ll}
\displaystyle{\varepsilon \partial_t v_\varepsilon=\Delta v_\varepsilon-\varepsilon F_1[\varphi_b]
+\varepsilon F_2[v_\varepsilon]},
\qquad & x\in\Omega,\,\,\,t>0,\vspace{5pt}\\
\displaystyle{\Delta w_\varepsilon=0}, & x\in\Omega,\,\,\,t>0,\vspace{5pt}\\
\displaystyle{v_\varepsilon=0},\quad
\displaystyle{\partial_tw_\varepsilon+\partial_{\nu} w_\varepsilon=-\partial_{\nu}v_\varepsilon}, & x\in\partial\Omega,\,\,\, t>0,\vspace{5pt}\\
\displaystyle{v_\varepsilon(x,0)=\Phi(x)},\qquad & x\in\Omega,\vspace{5pt}\\
\displaystyle{w_\varepsilon(x,0)=\varphi_b(x)}, & x\in\partial\Omega,
\end{array}
\right.
\end{equation}
where
\begin{align}
\label{eq:1.13}
F_1[\varphi_b](x,t) &
:=\int_{\partial\Omega}\partial_t\mathcal K(x,y,t)\varphi_b(y)\,d\sigma_y,\\
\begin{split}
\label{eq:1.14}
F_2[v](x,t) &
:=\int_{\partial\Omega}K(x,y)\partial_{\nu}v(y,t)\,d\sigma_y
+\int_0^t\int_{\partial\Omega}\partial_t\mathcal K(x,y,t-s)\partial_{\nu}v(y,s)\,d\sigma_y\,ds. 
\end{split}
\end{align}
Furthermore,
 the function $u_\varepsilon:=v_\varepsilon+w_\varepsilon$ is a classical solution of \eqref{eq:1.1}. 
Motivated by this observation, 
we formulate the definition of a solution of \eqref{eq:1.1} via problem~\eqref{eq:1.12}. 
\begin{definition}
\label{Definition:1.1}
Let $\varphi$ and $\varphi_b$ be measurable functions 
in $\Omega$ and $\partial\Omega$, respectively.
Let $0<T\le\infty$ and 
$$
v_\varepsilon,\,\, w_\varepsilon\in C(\overline{\Omega}\times(0,T)),\quad \nabla v_\varepsilon\in C(\overline{\Omega}\times(0,T)). 
$$
We call $(v_\varepsilon,w_\varepsilon)$ a solution of \eqref{eq:1.12} in $\Omega\times(0,T)$
if $v_\varepsilon$ and $w_\varepsilon$ satisfy
\begin{align*}
\begin{split}
v_\varepsilon(x,t)
 & =[S_1(\varepsilon^{-1}t)\Phi](x)
-\int_0^t[S_1(\varepsilon^{-1}(t-s))F_1[\varphi_b](s)](x)\,ds\\
 & \qquad\qquad\qquad\qquad\quad\,\,\,
+\int_0^t[S_1(\varepsilon^{-1}(t-s))F_2[v_\varepsilon](s)](x)\,ds,
 \end{split}
 \notag
\\
w_\varepsilon(x,t)
&
=[S_2(t)\varphi_b](x)-\int_0^t[S_2(t-s)\partial_{\nu}v_\varepsilon(s)](x)\, ds,
\end{align*}
for $x\in\overline{\Omega}$ and $t\in(0,T)$.
Furthermore, 
for the solution $(v_\varepsilon,w_\varepsilon)$ of \eqref{eq:1.12} in $\Omega\times(0,T)$,
we call $u_\varepsilon:=v_\varepsilon+w_\varepsilon$ a solution of \eqref{eq:1.1} in $\Omega\times(0,T)$.
In the case when $T=\infty$, we call $(v_\varepsilon,w_\varepsilon)$ a global-in-time solution of \eqref{eq:1.12}
and $u_\varepsilon$ a global-in-time solution of \eqref{eq:1.1}.
\end{definition} 

We are ready to state the main results of this paper.
For $1\le r\le\infty$ and $\theta\in(0,1)$, we write 
$|\cdot|_{L^r}:=\|\cdot\|_{L^r(\partial\Omega)}$, $\|\cdot\|_{L^r}:=\|\cdot\|_{L^r(\Omega)}$ and $|\cdot|_{C^{1,\theta}}:=\|\cdot\|_{C^{1,\theta}(\partial\Omega)}$ for simplicity.
\begin{theorem}
\label{Theorem:1.5}
Let $N\ge3$, $\varphi\in L^\infty(\Omega)$ 
and $\varphi_b\in C^{1,\theta}(\partial\Omega)$ with $\theta\in(0,1)$. 
Assume
\begin{equation}
\label{eq:1.15}
M:=\sup_{x\in\Omega}|x|^{N-2}|\varphi(x)|<\infty.
\end{equation}
Then for every $\varepsilon\in(0,1)$ the problem \eqref{eq:1.12} possesses a unique global-in-time solution 
$(v_\varepsilon,w_\varepsilon)$. These solutions have 
the following properties:
\begin{itemize}
  \item[{\rm (i)}] 
 For any $T>0$ there exists $C_T>0$ such that for every $\varepsilon\in(0,1)$ and every $\varphi, \varphi_b$ as above
  \begin{equation}
  \label{eq:1.16}
\sup_{0<t<T}\,\left[\|v_\varepsilon(t)\|_{L^\infty}
+(\varepsilon^{-1}t)^{\frac{1}{2}}\|\nabla v_\varepsilon(t)\|_{L^\infty}
   +\|w_\varepsilon(t)\|_{L^\infty}\right]
   \le C_T(|\varphi_b|_{C^{1,\theta}}+M).
  \end{equation}
  Furthermore, 
  $$
 \nabla^j v_\varepsilon\in C^\infty(\Omega\times I)\cap BC(\overline\Omega\times I),
 \qquad
 \partial_t^\ell\nabla^j w_\varepsilon\in C^\infty(\Omega\times I)\cap BC(\overline\Omega\times I)
  $$
  for any bounded interval $I\subset(0,\infty)$ and $0\le \ell+j\le 1$.
  \item[{\rm (ii)}] 
  Let $T>0$, $\tau\in(0,T)$ and
  \begin{equation}
  \label{eq:1.17}
  \alpha=1\quad\mbox{for}\quad N=3,
  \qquad
  1<\alpha<2\quad\mbox{for}\quad N\ge4.
  \end{equation}
  Then there exists $C>0$ such that for every $\varepsilon\in(0,1)$ 
  \begin{align}
  \label{eq:1.18}
   & \sup_{\tau<t<T}\,\|v_\varepsilon(t)\|_{L^\infty}\le C\varepsilon^{\frac{\alpha}{2}},\\
   \label{eq:1.19}
   & \sup_{0<t<T}\,\|w_\varepsilon(t)-S_2(t)\varphi_b\|_{L^\infty}\le C\varepsilon^{\frac{\alpha}{2}}.
  \end{align}
\end{itemize}
\end{theorem}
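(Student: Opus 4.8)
The plan is to obtain (i) from a Banach fixed point together with a continuation argument based on explicit kernel bounds, and (ii) by decomposing the Duhamel representation of $v_\varepsilon$ and rescaling the time integral, the dimension $N$ and the restriction $\alpha<2$ entering through one elementary integral only. The tools I would assemble first are: for the Dirichlet heat semigroup $S_1$ on $\Omega$, the standard $\|S_1(t)\phi\|_{L^\infty}\le\|\phi\|_{L^\infty}$, $\|\nabla S_1(t)\phi\|_{L^\infty}\le Ct^{-1/2}\|\phi\|_{L^\infty}$ and $[\nabla S_1(t)\phi]_{C^\theta(\overline\Omega)}\le Ct^{-(1+\theta)/2}\|\phi\|_{L^\infty}$, reinforced by an improved decay on harmonically decaying data — if $|\phi(x)|\le M|x|^{-(N-2)}$, then comparing with the free heat flow of $|x|^{-(N-2)}=c_N\int_t^\infty\Gamma(x,s)\,ds\asymp(|x|+\sqrt t)^{-(N-2)}$ gives
\[\|S_1(t)\phi\|_{L^\infty}\le CM(1+t)^{-(N-2)/2},\qquad\|\nabla S_1(t)\phi\|_{L^\infty}\le CM\,t^{-1/2}(1+t)^{-(N-2)/2};\]
for the kernels $K(x,y)=c_N(|x|^2-1)|x-y|^{-N}$, $\mathcal K(x,y,t)=K(e^tx,y)\ge0$, the identities $\int_{\partial\Omega}\mathcal K(x,y,t)\,d\sigma_y=|e^tx|^{-(N-2)}$ (hence $\|S_2(t)\psi\|_{L^\infty}\le e^{-(N-2)t}\|\psi\|_{L^\infty}$) and $\int_{\partial\Omega}\partial_t\mathcal K(x,y,t)\,d\sigma_y=-(N-2)e^{-(N-2)t}|e^tx|^{-(N-2)}$, and — although $\|\partial_t\mathcal K(x,\cdot,t)\|_{L^1(\partial\Omega)}\asymp t^{-1}$ as $t\to0$ for $x\in\partial\Omega$ — the compensated bound $\int_{\partial\Omega}|\partial_t\mathcal K(x,y,t)|\,|x^\sharp-y|^{\vartheta}\,d\sigma_y\le Ct^{\vartheta-1}$ for $\vartheta\in(0,1]$, uniformly in $x\in\overline\Omega$, where $x^\sharp:=x/|x|$; and, from potential theory and the fact that \eqref{eq:1.2} is parabolic on $\partial\Omega$, the bounds $\|S_2(0)\psi\|_{C^{1,\theta}(\overline\Omega)}\le C|\psi|_{C^{1,\theta}}$ and $\|F_1[\varphi_b](t)\|_{L^\infty(\partial\Omega)}\le C|\varphi_b|_{C^{1,\theta}}$ uniformly in $t>0$ (here the full $C^{1,\theta}$ norm, not merely $C^1$, removes a logarithm), together with $\Delta_xF_1[\varphi_b](\cdot,t)=0$ in $\Omega$ and the maximum-principle bound $|F_1[\varphi_b](x,t)|\le C|\varphi_b|_{C^{1,\theta}}|x|^{-(N-2)}$.

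For (i): for $\varepsilon\in(0,1)$ and $T>0$ I would work in $X_T$ with norm $\|v\|_{X_T}:=\sup_{0<t\le T}\bigl[\|v(t)\|_{L^\infty}+(\varepsilon^{-1}t)^{1/2}\|\nabla v(t)\|_{L^\infty}+(\varepsilon^{-1}t)^{(1+\theta)/2}[\nabla v(t)]_{C^\theta(\overline\Omega)}\bigr]$ and take $\mathcal T v$ to be the right-hand side of the first integral equation in Definition~\ref{Definition:1.1}. The $\Phi$-term (note $|\Phi(x)|\le C(M+|\varphi_b|_{L^\infty})|x|^{-(N-2)}$, since $|S_2(0)\varphi_b(x)|\le|\varphi_b|_{L^\infty}\int_{\partial\Omega}|K(x,y)|\,d\sigma_y=|\varphi_b|_{L^\infty}|x|^{-(N-2)}$) and the $F_1[\varphi_b]$-term are bounded in $X_T$ by $C(M+|\varphi_b|_{C^{1,\theta}})$ via the toolkit. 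For the $F_2$-term, split $F_2=F_2^{(1)}+F_2^{(2)}$; the first part is controlled by $\|\nabla v(s)\|_{L^\infty}$ (because $\int_{\partial\Omega}|K(x,y)|\,d\sigma_y\le1$), and the second by writing $\partial_\nu v(y,r)=[\partial_\nu v(y,r)-\partial_\nu v(x^\sharp,r)]+\partial_\nu v(x^\sharp,r)$ and using the two $\partial_t\mathcal K$-bounds above, which produces $\int_0^s\bigl(\|\nabla v(r)\|_{L^\infty}+(s-r)^{\theta-1}[\nabla v(r)]_{C^\theta}\bigr)dr$. Every time integral that arises is of the form $\int_0^t(\varepsilon^{-1}(t-s))^{-\beta}(\varepsilon^{-1}s)^{-\gamma}\,ds$ with $\beta,\gamma<1$, hence finite; one checks that $\mathcal T$ maps a suitable ball of $X_T$ into itself and is a contraction for $T$ small, with constants uniform in $\varepsilon$. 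A continuation argument then produces the global solution with the bound \eqref{eq:1.16} on each $(0,T)$ (the constant $C_T$ absorbing the growth of the $F_2$-feedback over successive intervals), uniqueness being immediate from the contraction estimate. The stated smoothness follows by a standard Schauder bootstrap on compact $t$-subintervals, where the source $-\varepsilon F_1[\varphi_b]+\varepsilon F_2[v_\varepsilon]$ is Hölder, while $w_\varepsilon$ inherits its regularity from the elliptic problem it solves with boundary data $\varphi_b$ and $-\partial_\nu v_\varepsilon$.

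For (ii): fix $T>0$, $\tau\in(0,T)$ and $\alpha$ as in \eqref{eq:1.17}, and for $t\in(\tau,T)$ decompose $v_\varepsilon(t)=\mathrm I-\mathrm{II}+\mathrm{III}$ with $\mathrm I=S_1(\varepsilon^{-1}t)\Phi$, $\mathrm{II}=\int_0^tS_1(\varepsilon^{-1}(t-s))F_1[\varphi_b](s)\,ds$ and $\mathrm{III}=\int_0^tS_1(\varepsilon^{-1}(t-s))F_2[v_\varepsilon](s)\,ds$. By the improved decay, $\|\mathrm I\|_{L^\infty}\le C(M+|\varphi_b|_{L^\infty})(\varepsilon^{-1}t)^{-(N-2)/2}\le C\tau^{-(N-2)/2}\varepsilon^{(N-2)/2}\le C\varepsilon^{\alpha/2}$, since $(N-2)/2\ge\alpha/2$ for all admissible $N,\alpha$. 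For $\mathrm{II}$, substitute $s=t-\varepsilon\sigma$ and use $\|S_1(\sigma)F_1[\varphi_b](s)\|_{L^\infty}\le C|\varphi_b|_{C^{1,\theta}}(1+\sigma)^{-(N-2)/2}$ (valid since $F_1[\varphi_b](s,\cdot)$ is harmonic and $\le C|\varphi_b|_{C^{1,\theta}}|x|^{-(N-2)}$), so that
\[\|\mathrm{II}\|_{L^\infty}\le C|\varphi_b|_{C^{1,\theta}}\,\varepsilon\!\int_0^{t/\varepsilon}(1+\sigma)^{-(N-2)/2}\,d\sigma\le C|\varphi_b|_{C^{1,\theta}}\begin{cases}\varepsilon^{1/2},&N=3,\\ \varepsilon|\ln\varepsilon|,&N=4,\\ \varepsilon,&N\ge5.\end{cases}\]
This single integral is exactly where $N$ and the restriction $\alpha<2$ enter: for $N=3$ it is $\asymp\varepsilon^{1/2}=\varepsilon^{\alpha/2}$, for $N=4$ the logarithm forces $\alpha<2$, and for $N\ge5$ it is $O(\varepsilon)$; in every case $\|\mathrm{II}\|_{L^\infty}\le C\varepsilon^{\alpha/2}$. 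For $\mathrm{III}$, \eqref{eq:1.16} (with the $C^\theta$-component of the $X_T$-norm) gives $\|\nabla v_\varepsilon(s)\|_{L^\infty}\le C(M+|\varphi_b|_{C^{1,\theta}})(\varepsilon^{-1}s)^{-1/2}$ and $[\nabla v_\varepsilon(s)]_{C^\theta}\le C(M+|\varphi_b|_{C^{1,\theta}})(\varepsilon^{-1}s)^{-(1+\theta)/2}$, hence $\|F_2[v_\varepsilon](s)\|_{L^\infty(\partial\Omega)}\le C(M+|\varphi_b|_{C^{1,\theta}})\bigl(\varepsilon^{1/2}s^{-1/2}+\varepsilon^{(1+\theta)/2}s^{(\theta-1)/2}\bigr)$; the crude bound $\|\mathrm{III}\|_{L^\infty}\le\int_0^t\|F_2[v_\varepsilon](s)\|_{L^\infty}\,ds\le C\varepsilon^{1/2}$ already settles $N=3$, and rescaling as for $\mathrm{II}$ (with $F_2[v_\varepsilon](s,\cdot)$ harmonic and $|x|^{-(N-2)}$-decaying) gives $\|\mathrm{III}\|_{L^\infty}\le C\varepsilon^{3/2}(1+|\ln\varepsilon|)\le C\varepsilon^{\alpha/2}$ for $N\ge4$ — so $\mathrm{III}$ is never the bottleneck and no Gronwall loop is needed, the smallness being already carried by $(\varepsilon^{-1}s)^{-1/2}$. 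This proves \eqref{eq:1.18}. For \eqref{eq:1.19}, since $w_\varepsilon(t)-S_2(t)\varphi_b=-\int_0^tS_2(t-s)\partial_\nu v_\varepsilon(s)\,ds$, it suffices to show $\int_0^T\|\partial_\nu v_\varepsilon(s)\|_{L^\infty(\partial\Omega)}\,ds\le C\varepsilon^{\alpha/2}$; decomposing $\partial_\nu v_\varepsilon=\partial_\nu(\mathrm I-\mathrm{II}+\mathrm{III})$ and repeating the estimates with $\nabla S_1$ in place of $S_1$ (costing one more, still integrable, factor $\sigma^{-1/2}$ under the rescaling) yields contributions $\le C\varepsilon|\ln\varepsilon|$ for $N=3$ and $\le C\varepsilon$ for $N\ge4$ from $\partial_\nu\mathrm I$ and $\partial_\nu\mathrm{II}$, and $\le C\varepsilon$ from $\partial_\nu\mathrm{III}$, all $\le C\varepsilon^{\alpha/2}$.

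The main obstacle is the term $F_2^{(2)}[v_\varepsilon]$: its natural $L^1(\partial\Omega)$-kernel is only $\asymp t^{-1}$ as $t\to0$ for $x\in\partial\Omega$ and is therefore not time-integrable against the $s^{-1/2}$-singular density $\partial_\nu v_\varepsilon$; one must exploit the cancellation $\int_{\partial\Omega}\partial_t\mathcal K(x,y,t)\,d\sigma_y=O(1)$ by pairing $\partial_t\mathcal K$ with the Hölder modulus of $\partial_\nu v_\varepsilon$, which is precisely why the weighted $C^\theta$-seminorm of $\nabla v_\varepsilon$ must be carried through the fixed-point argument, and why $\varphi_b\in C^{1,\theta}$ (not merely $C^1$) is assumed. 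Establishing all the weighted pointwise and $L^1(d\sigma_y)$-bounds for $\mathcal K$, $\partial_t\mathcal K$, $\nabla_x\mathcal K$ and $\nabla_x\partial_t\mathcal K$ that this requires — uniformly up to $\partial\Omega$ and for all $t>0$ — from $\mathcal K(x,y,t)=c_N(e^{2t}|x|^2-1)|e^tx-y|^{-N}$ is the bulk of the work; by contrast, the rescaling computation that produces the rate $\varepsilon^{\alpha/2}$ is short.
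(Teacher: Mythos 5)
Your overall architecture is sound and genuinely different from the paper's in three respects. First, the paper never carries a H\"older seminorm of $\nabla v$ through the fixed point: it handles the singular kernel $\partial_t\mathcal K$ not by the cancellation $\int_{\partial\Omega}\partial_t\mathcal K\,d\sigma_y=O(1)$ paired with the H\"older modulus of $\partial_\nu v$, but by the interpolation $|\partial_t\mathcal K(x,y,t)|\le N\,e^t|x|\,t^{-(1-\beta)}(|x|-1)^{-\beta}\mathcal K(x,y,t)$, which trades the non-integrable $t^{-1}$ singularity for a boundary weight $(|y|-1)^{-\beta}\chi_{\{1\le|y|\le2\}}\in L^{N/(\alpha-1)}$ that is then absorbed by the $L^q\to L^\infty$ smoothing of the Dirichlet heat semigroup (Lemma~\ref{Lemma:3.1} and \eqref{eq:3.11}--\eqref{eq:3.15}). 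This keeps the fixed-point space at the level of $\|v\|_{L^\infty}$ and $\|\nabla v\|_{L^\infty}$ only, and avoids having to prove the boundary Schauder estimate $[\nabla S_1(t)\phi]_{C^\theta(\overline\Omega)}\le Ct^{-(1+\theta)/2}\|\phi\|_{L^\infty}$ on the exterior domain, which your route requires but which is not among the kernel bounds the paper quotes. Second, the paper obtains the contraction on all of $(0,T)$ at once via the exponential weight $e^{-Lt}$ and Lemma~\ref{Lemma:3.3}, rather than by small-time contraction plus continuation; this sidesteps the bookkeeping of restarting a norm anchored at $t=0$ in the presence of the memory term $F_2''$. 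Third, for part (ii) the paper builds the rate into the norm (the weight $1+(\varepsilon^{-1}t)^{\alpha/2}$ on $\|v(t)\|_{L^\infty}$), so \eqref{eq:1.18} drops out of $\|v_\varepsilon\|_{X_{T,L}}\le m$, whereas you re-insert the part-(i) bound into Duhamel a posteriori; your identification of the $F_1[\varphi_b]$ term and the integral $\varepsilon\int_0^{t/\varepsilon}(1+\sigma)^{-(N-2)/2}d\sigma$ as the sole source of the dimension dependence and of the restriction $\alpha<2$ agrees exactly with the paper's Lemma~\ref{Lemma:2.4}, and your bounds on the terms $\mathrm{I}$ and $\mathrm{III}$ check out.

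One concrete defect: the weight $(\varepsilon^{-1}t)^{(1+\theta)/2}$ on $[\nabla v(t)]_{C^\theta(\overline\Omega)}$ in your norm is too strong at large $\varepsilon^{-1}t$ when $N=3$. Even for the free term, the best available decay is $[\nabla S_1(\tau)\Phi]_{C^\theta}\le CM(1+\tau)^{-(N-2)/2}\max\{1,\tau^{-(1+\theta)/2}\}$ (the H\"older smoothing saturates for $\tau\ge1$, just as $h(\tau)=1$ there), so $\tau^{(1+\theta)/2}[\nabla S_1(\tau)\Phi]_{C^\theta}$ grows like $\tau^{(1+\theta)/2-(N-2)/2}$, which is unbounded precisely when $N=3$ (and borderline for $N=4$, $\theta=1$). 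Hence $S_1(\varepsilon^{-1}t)\Phi$ does not lie in your ball uniformly in $\varepsilon$, and the self-mapping step fails as written. The fix is the same device the paper uses for the gradient component of $E_\varepsilon[v]$: replace the single power by a two-regime weight such as $(\varepsilon^{-1}t)^{1/2}\min\{1,(\varepsilon^{-1}t)^{\theta/2}\}\cdot(\varepsilon^{-1}t)^{\theta/2}$, i.e.\ cap the exponent at the rate actually delivered by $(G_2)$-type decay for large times; the small-time singularity $(t-s)^{-(1+\theta)/2}$ in the Duhamel integrals remains integrable, so the rest of your scheme survives. With that correction, and with the boundary-uniform H\"older smoothing estimate for $\nabla S_1$ actually proved, your argument goes through.
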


The reason why it is natural to assume \eqref{eq:1.15} is explained in \cite[Section~7]{FIKL}.
As a corollary of Theorem~\ref{Theorem:1.5}, 
we see that the solution $u_\varepsilon=v_\varepsilon+w_\varepsilon$ of \eqref{eq:1.1} 
converges to the solution $S_2(t)\varphi_b$ of \eqref{eq:1.2}.
\begin{corollary}
\label{Corollary:1.1}
Assume the same conditions as in Theorem~{\rm\ref{Theorem:1.5}}. 
Let $\alpha$ be as in \eqref{eq:1.17} 
and
$(v_\varepsilon,w_\varepsilon)$ the solution given in Theorem~{\rm\ref{Theorem:1.5}}. 
Then $u_\varepsilon=v_\varepsilon+w_\varepsilon$ is a classical global-in-time solution of \eqref{eq:1.1}. 
Furthermore, for any $T>0$ and $\tau\in(0,T)$ there exists $C>0$ such that
$$
\sup_{\tau<t<T}\,\|u_\varepsilon(t)-S_2(t)\varphi_b\|_{L^\infty}\le C\varepsilon^{\frac{\alpha}{2}}
$$
for every $\varepsilon \in(0,1)$.
\end{corollary}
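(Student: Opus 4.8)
The plan is to separate the statement into its two parts and dispatch the convergence estimate first, since it follows at once from Theorem~\ref{Theorem:1.5}. Writing $u_\varepsilon(t)-S_2(t)\varphi_b=v_\varepsilon(t)+\bigl(w_\varepsilon(t)-S_2(t)\varphi_b\bigr)$ and using the triangle inequality together with \eqref{eq:1.18} and \eqref{eq:1.19}, we get for all $\tau<t<T$ and all $\varepsilon\in(0,1)$
\[
\|u_\varepsilon(t)-S_2(t)\varphi_b\|_{L^\infty}\le\|v_\varepsilon(t)\|_{L^\infty}+\|w_\varepsilon(t)-S_2(t)\varphi_b\|_{L^\infty}\le C\varepsilon^{\frac{\alpha}{2}},
\]
which is the desired bound (with a new constant $C$), and recall that $S_2(t)\varphi_b$ is the solution of \eqref{eq:1.2} by \eqref{eq:1.7}--\eqref{eq:1.8} with $g\equiv0$.

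It remains to check that $u_\varepsilon:=v_\varepsilon+w_\varepsilon$ is a classical global-in-time solution of \eqref{eq:1.1}. Here I would use the regularity recorded in Theorem~\ref{Theorem:1.5}(i): for every bounded interval $I\subset(0,\infty)$ the functions $v_\varepsilon,\nabla v_\varepsilon$ and $w_\varepsilon,\nabla w_\varepsilon,\partial_tw_\varepsilon$ are smooth in $\Omega\times I$ and bounded and continuous on $\overline\Omega\times I$, which is enough to read the differential identities below pointwise. From the Duhamel representation of $v_\varepsilon$ in Definition~\ref{Definition:1.1}, the fact that $S_1$ is the Dirichlet heat semigroup on $\Omega$, and standard interior parabolic regularity applied to the source terms $F_1[\varphi_b]$, $F_2[v_\varepsilon]$, one obtains that $v_\varepsilon$ satisfies $\varepsilon\partial_tv_\varepsilon=\Delta v_\varepsilon-\varepsilon F_1[\varphi_b]+\varepsilon F_2[v_\varepsilon]$ in $\Omega\times(0,\infty)$, $v_\varepsilon=0$ on $\partial\Omega\times(0,\infty)$ and $v_\varepsilon(\cdot,0)=\Phi$; similarly, the representation of $w_\varepsilon$ together with the properties \eqref{eq:1.6}--\eqref{eq:1.9} of $S_2$ gives $\Delta w_\varepsilon=0$ in $\Omega\times(0,\infty)$, $\partial_tw_\varepsilon+\partial_\nu w_\varepsilon=-\partial_\nu v_\varepsilon$ on $\partial\Omega\times(0,\infty)$ and $w_\varepsilon(\cdot,0)=\varphi_b$. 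This is precisely the derivation \eqref{eq:1.7}--\eqref{eq:1.14} preceding Definition~\ref{Definition:1.1}, run in reverse; its algebraic core is the identity $\partial_tw_\varepsilon=F_1[\varphi_b]-F_2[v_\varepsilon]$ in $\Omega\times(0,\infty)$, which is \eqref{eq:1.9} with $g$ replaced by $-\partial_\nu v_\varepsilon$.

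Combining these and using $\Delta w_\varepsilon=0$, we find for $u_\varepsilon=v_\varepsilon+w_\varepsilon$ that
\[
\varepsilon\partial_tu_\varepsilon-\Delta u_\varepsilon=\bigl(\varepsilon\partial_tv_\varepsilon-\Delta v_\varepsilon\bigr)+\varepsilon\partial_tw_\varepsilon=-\varepsilon F_1[\varphi_b]+\varepsilon F_2[v_\varepsilon]+\varepsilon\bigl(F_1[\varphi_b]-F_2[v_\varepsilon]\bigr)=0
\]
in $\Omega\times(0,\infty)$, while on $\partial\Omega$, since $v_\varepsilon\equiv0$ there forces $\partial_tv_\varepsilon\equiv0$ there,
\[
\partial_tu_\varepsilon+\partial_\nu u_\varepsilon=\bigl(\partial_tw_\varepsilon+\partial_\nu w_\varepsilon\bigr)+\partial_\nu v_\varepsilon=-\partial_\nu v_\varepsilon+\partial_\nu v_\varepsilon=0,
\]
and the initial data are $u_\varepsilon(\cdot,0)=\Phi+S_2(0)\varphi_b=\varphi$ in $\Omega$ by \eqref{eq:1.10} and $u_\varepsilon(\cdot,0)=\varphi_b$ on $\partial\Omega$ (using $[S_2(0)\varphi_b]|_{\partial\Omega}=\varphi_b$, which follows from ${\mathcal K}(\cdot,y,0)=\delta_y$). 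Hence $u_\varepsilon$ is a classical global-in-time solution of \eqref{eq:1.1}, and the corollary is proved.

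I do not expect a serious obstacle here. The only points requiring care are regularity-theoretic: passing from the integral (mild) formulation to the pointwise differential identities via interior parabolic estimates for $v_\varepsilon$ and interior elliptic estimates for $w_\varepsilon$, and the attainment of the initial traces (uniformly as $t\to0^+$ for $w_\varepsilon$, and only in the sense compatible with $\varphi\in L^\infty$ for $v_\varepsilon$). Since Theorem~\ref{Theorem:1.5}(i) already asserts exactly this regularity, for the corollary it suffices to invoke it, and the remaining work is routine bookkeeping.
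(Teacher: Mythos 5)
Your proposal is correct and follows the same route as the paper, which simply notes that the corollary is immediate from Theorem~\ref{Theorem:1.5} and Definition~\ref{Definition:1.1}; you have merely spelled out the triangle-inequality step for the rate and the reversal of the derivation \eqref{eq:1.7}--\eqref{eq:1.14} that the authors leave implicit.
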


This means that for $N\ge 4$ the convergence is faster than for $N=3$.
Moreover, we obtain an estimate from below. 
In the case of $N=3$, the rates in Corollary~\ref{Corollary:1.1} and in the following
Theorem~\ref{Theorem:1.6} coincide, and if $N=4$ they can become arbitrarily close. 
\begin{theorem}
\label{Theorem:1.6}
Let $N\ge3$.
There exists a nonnegative function $\varphi\in L^\infty(\Omega)$ with \eqref{eq:1.15}
such that the following holds: 
Let $\mathfrak K$ be a compact set in $\Omega\times(0,\infty)$.
Then there exists $C>0$ such that 
for any $\varepsilon\in(0,1)$
the corresponding solution $u_\varepsilon$ of \eqref{eq:1.1} with $\varphi_b\equiv0$ satisfies 
\begin{equation}
\label{eq:1.20}
 u_\varepsilon(x,t)-[S_2(t)\varphi_b](x)=u_\varepsilon(x,t) \ge C\varepsilon^{\frac N2 -1},
\qquad\quad(x,t)\in \mathfrak K.
\end{equation}
\end{theorem}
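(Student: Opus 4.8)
The plan is to fix once and for all one radially symmetric initial datum $\varphi$ with the slowest decay permitted by \eqref{eq:1.15}, to bound the solution $u_\varepsilon$ from below by the pure Dirichlet heat flow generated by $\varphi$, and then to show that this flow decays precisely at the rate $\varepsilon^{N/2-1}$ because $N\ge3$. Concretely I would take $h(x):=1-|x|^{-(N-2)}$, the bounded harmonic function on $\Omega$ vanishing on $\partial\Omega$, and $\varphi(x):=h(x)\,|x|^{-(N-2)}$. Then $\varphi\ge0$, $\varphi\in L^\infty(\Omega)$, $\varphi=0$ on $\partial\Omega$, and $|x|^{N-2}\varphi(x)=h(x)\in[0,1)$, so \eqref{eq:1.15} holds, and $\varphi$ is fixed before $\mathfrak K$ is given. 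Since $\varphi_b\equiv0$ forces $[S_2(t)\varphi_b]\equiv0$, it suffices to prove $u_\varepsilon\ge C\varepsilon^{N/2-1}$ on $\mathfrak K$. The first step is the comparison $u_\varepsilon\ge\tilde u_\varepsilon$ in $\Omega\times(0,\infty)$, where $\tilde u_\varepsilon(x,t):=[S_1(\varepsilon^{-1}t)\varphi](x)$ is the nonnegative bounded classical solution of $\varepsilon\partial_t\tilde u_\varepsilon=\Delta\tilde u_\varepsilon$ in $\Omega$, $\tilde u_\varepsilon=0$ on $\partial\Omega$, $\tilde u_\varepsilon(\cdot,0)=\varphi$: since $\tilde u_\varepsilon\ge0$ in $\Omega$ and vanishes on $\partial\Omega$, its exterior normal derivative there satisfies $\partial_\nu\tilde u_\varepsilon\le0$, so $z:=u_\varepsilon-\tilde u_\varepsilon$ solves the heat equation in $\Omega$ with vanishing initial datum on $\overline\Omega$ (here $\varphi|_{\partial\Omega}=0$ is used) and the dynamical boundary condition $\partial_t z+\partial_\nu z=-\partial_\nu\tilde u_\varepsilon\ge0$ on $\partial\Omega$; the comparison principle for \eqref{eq:1.1} — of the type exploited in the discussion preceding Definition~\ref{Definition:1.1} and in \cite{FIKL} — then gives $z\ge0$.

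The main step is the lower bound: for every compact $\mathfrak K_0\subset\Omega$ there exist $c_*>0$ and $\tau_0\ge1$ such that $[S_1(\tau)\varphi](x)\ge c_*\,\tau^{-(N/2-1)}$ for all $x\in\mathfrak K_0$ and $\tau\ge\tau_0$. I would prove this by restricting the representation $[S_1(\tau)\varphi](x)=\int_\Omega\Gamma_D(x,y,\tau)\varphi(y)\,dy$ to the annulus $A_\tau:=\{R_1\sqrt\tau\le|y|\le R_2\sqrt\tau\}$. For $\tau$ large $A_\tau\subset\Omega$, one has $h\ge\tfrac12$ on $A_\tau$, and $|y|^{-(N-2)}\ge R_2^{-(N-2)}\tau^{-(N-2)/2}$ there, so $\varphi(y)\ge\tfrac12 R_2^{-(N-2)}\tau^{-(N-2)/2}$ on $A_\tau$. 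For the surviving factor $\int_{A_\tau}\Gamma_D(x,y,\tau)\,dy$ I would combine two observations: first, $\Gamma_D(x,y,\tau)\le G(x-y,\tau)$ (the Gaussian kernel), which after the rescaling $y=x+\sqrt\tau\,z$ makes the $\Gamma_D(x,\cdot,\tau)$-mass outside $A_\tau$ smaller than, say, $\tfrac12\min_{\mathfrak K_0}h$ for all $\tau\ge\tau_0$, once $R_1$ is chosen small and $R_2$ large enough; second, since $h$ is bounded, harmonic in $\Omega$ and vanishes on $\partial\Omega$, uniqueness of bounded solutions yields $S_1(\tau)h=h$, hence $\int_\Omega\Gamma_D(x,y,\tau)\,dy=[S_1(\tau)\mathbf 1](x)\ge[S_1(\tau)h](x)=h(x)\ge\min_{\mathfrak K_0}h>0$ (this positivity is where $N\ge3$ enters). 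Subtracting, $\int_{A_\tau}\Gamma_D(x,y,\tau)\,dy\ge\tfrac12\min_{\mathfrak K_0}h$ uniformly, and the claim follows with a suitable $c_*>0$.

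To conclude, since $\mathfrak K$ is compact in the open set $\Omega\times(0,\infty)$, its spatial projection lies in a compact $\mathfrak K_0\subset\Omega$ and its time-component lies in $[t^-,t^+]$ with $0<t^-\le t^+<\infty$. For $(x,t)\in\mathfrak K$ put $\tau:=\varepsilon^{-1}t$. If $\tau\ge\tau_0$, the previous step gives $u_\varepsilon(x,t)\ge\tilde u_\varepsilon(x,t)=[S_1(\tau)\varphi](x)\ge c_*\,t^{-(N/2-1)}\varepsilon^{N/2-1}\ge c_*(t^+)^{-(N/2-1)}\varepsilon^{N/2-1}$. If $\tau<\tau_0$, then $\tau\ge t\ge t^-$ because $\varepsilon<1$, so $(x,\tau)$ ranges over the compact set $\mathfrak K_0\times[t^-,\tau_0]\subset\Omega\times(0,\infty)$, on which $(y,\sigma)\mapsto[S_1(\sigma)\varphi](y)$ is continuous and strictly positive (by the strong maximum principle, as $\varphi\ge0$, $\varphi\not\equiv0$), hence bounded below by some $c_2>0$; since $N/2-1>0$ this gives $u_\varepsilon(x,t)\ge[S_1(\tau)\varphi](x)\ge c_2\ge c_2\varepsilon^{N/2-1}$. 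Taking $C:=\min\{c_*(t^+)^{-(N/2-1)},c_2\}$ yields \eqref{eq:1.20}.

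I expect the second step to be the real obstacle: it is there that the exponent $N/2-1$ is produced, through the interplay between the slowest admissible tail $|x|^{-(N-2)}$ of $\varphi$ (which makes the heat mass concentrated at scale $|y|\sim\sqrt\tau$ of size $\tau^{-(N-2)/2}$) and the positivity of $h$ on $\Omega$ (which guarantees that this mass is not entirely absorbed by $\partial\Omega$), the latter being exactly where the hypothesis $N\ge3$ is used. A secondary delicate point is that the lower bound in that step must be uniform up to, but not including, $\partial\Omega$ — which is precisely why $\mathfrak K$ is required to keep a positive distance from the boundary.
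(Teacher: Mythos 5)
Your proposal is correct, and its overall skeleton coincides with the paper's: you bound $u_\varepsilon$ from below by the rescaled Dirichlet heat flow $[S_1(\varepsilon^{-1}t)\varphi](x)$ via the observation that a nonnegative solution of the Dirichlet problem has $\partial_\nu\le 0$ on $\partial\Omega$ and is therefore a subsolution of the dynamical boundary problem (this is exactly Lemma~4.1 of the paper, which invokes the comparison principle of von Below--De Coster \cite{vBD}, a more precise reference than the one you gesture at), and then you show that this Dirichlet flow decays no faster than $\tau^{-(N/2-1)}$ at the diffusive time $\tau=\varepsilon^{-1}t$. Where you genuinely diverge is in the proof of that key lower bound. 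The paper takes $\varphi(x)=|x|^{2-N}\chi_{\{|x|>b\}}$ and quotes the Grigor'yan--Saloff-Coste lower Gaussian bound for the exterior Dirichlet heat kernel \cite[Theorem~1.1]{GS}, after which a one-line radial integration gives $z(x,t)\ge Ct^{-N/2+1}$ uniformly for all $t>\tau$, so no case distinction in time is needed. You instead take $\varphi=h\,|x|^{-(N-2)}$ with $h(x)=1-|x|^{-(N-2)}$ and avoid any lower heat-kernel estimate: you localize the integral to the annulus $|y|\sim\sqrt\tau$, control the mass outside it by the whole-space Gaussian upper bound, and recover a uniform positive lower bound for the remaining mass from the identity $S_1(\tau)h=h$ (invariance of the bounded harmonic function vanishing on $\partial\Omega$, i.e.\ a survival-probability argument), which is precisely where $N\ge 3$ enters; the price is the extra compactness/strong-maximum-principle step for the regime $\varepsilon^{-1}t<\tau_0$, which the paper's uniform kernel bound makes unnecessary. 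Your route is more self-contained (it only uses the Gaussian upper bound already quoted in Section~2 plus uniqueness of bounded solutions for the exterior Dirichlet problem, which you should state as the justification of $S_1(\tau)h=h$), while the paper's route is shorter at the cost of citing the two-sided kernel estimates of \cite{GS}; both produce the same exponent $N/2-1$ from the tail $|y|^{-(N-2)}$ concentrated at scale $|y|\sim\sqrt\tau$.
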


The rest of this paper is organized as follows. 
In Section~2 we recall some properties of the Dirichlet heat kernel $\Gamma_D$ and the kernel $\mathcal K$.
Furthermore, we prepare some useful lemmata.
In Section~3, modifying the argument as in \cite{FIK02}, 
we give a proof of Theorem~\ref{Theorem:1.5}. 
In Section~4 we prove Theorem~\ref{Theorem:1.6}. 
\section{Preliminaries}
In this section we recall some properties of the Dirichlet heat kernel $\Gamma_D$ 
on the exterior $\Omega$ of the ball $B(0,1)$
and obtain some estimates of integral operators $S_1(t)$, $S_2(t)$ and $F$.
\vspace{3pt}

We first recall some properties of the semigroup $S_1(t)$.  
By  \cite[Theorem~16.3]{LSU} (see also \cite{GS, IK}) 
we find $C>0$ such that 
$$
|\nabla_x^j \Gamma_D(x,y,t)|\le Ct^{-\frac{N}{2}}h(t)^j\exp\left(-C\frac{|x-y|^2}{t}\right),
\qquad x,y\in\Omega,\quad  t>0,
$$
where 
\begin{equation}\label{eq:2.1}
h(t):=\max\{1,t^{-1/2}\}
\end{equation}
and $j\in\set{0,1}$.
Then we have: 
\begin{itemize}
\item[(${\rm G_1}$)]
There exists $c_1=c_1(N)$ such that 
$$
\|\nabla^jS_1(t)\phi\|_{L^p}\le c_1 t^{-\frac{N}{2}\left(\frac{1}{q}-\frac{1}{p}\right)}h(t)^j\|\phi\|_{L^q},
\qquad t>0,
$$
for $\phi\in L^{q,\infty}(\Omega)$,  $1\le p\le q\le\infty$ and $j\in\set{0,1}$;
\item[(${\rm G_2}$)]
Let $0\le\gamma<N$. Assume that a measurable function $f$ in $\Omega$ satisfies 
$$
|f(x)|\le|x|^{-\gamma}
$$
for almost all $x\in\Omega$. Then there exists $c_2=c_2(N,\gamma)>0$
such that 
$$
\|\nabla^jS_1(t)f\|_\infty\le c_2(1+t)^{-\frac{\gamma}{2}}h(t)^j,\qquad t>0,
$$
where $j\in\set{0,1}$
(see e.g. \cite{FKS});
\item[(${\rm G_3}$)]
Let $\phi\in L^q(\Omega)$ with $1\le q\le\infty$ and $\tau>0$. 
Then $S_1(t)\phi$ is bounded and smooth in $\overline{\Omega}\times(\tau,\infty)$.
\end{itemize} 
\vspace{5pt}

Next we recall some properties of the kernel ${\mathcal K}$ and $S_2(t)\psi$. 
By \cite[Lemmata~2.1 and~2.2]{FIK01} we have the following two lemmata.

\begin{lemma}
\label{Lemma:2.1}
Let $N\ge 3$ and ${\mathcal K}$ be as in \eqref{eq:1.5}. 
Then 
\begin{equation}
\label{eq:2.2}
 \int_{\partial\Omega}{\mathcal K}(x,y,t)\,d\sigma_y
 = \int_{\partial\Omega}K(e^tx,y)\,d\sigma_y
=(e^t|x|)^{-(N-2)}
\end{equation}
for $(x,t)\in\overline{\Omega}\times[0,\infty)$ with $e^tx\in\Omega$. 
\end{lemma}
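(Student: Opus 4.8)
The plan is to reduce \eqref{eq:2.2} to the defining normalization $\|P(z,\cdot)\|_{L^1(\partial B)}=1$ of the Poisson kernel, carried through the Kelvin transform. Write $z:=e^tx$. The hypothesis $e^tx\in\Omega$ means $|z|>1$, so the inverted point $z/|z|^{2}$ has modulus $|z|^{-1}<1$ and hence lies in $B$. In particular the map $y\mapsto P(z/|z|^{2},y)$ is defined and continuous on all of $\partial B$ (no point is excluded), and $\int_{\partial B}P(z/|z|^{2},y)\,d\sigma_y=1$ by the choice of $c_N$.

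Next I would simply unwind the definitions. By \eqref{eq:1.5} and \eqref{eq:1.4}, and since $\partial\Omega=\partial B$, the factor $|z|^{-(N-2)}$ is constant in $y$ and may be pulled out of the integral, so that
$$
\int_{\partial\Omega}{\mathcal K}(x,y,t)\,d\sigma_y
=\int_{\partial\Omega}K(z,y)\,d\sigma_y
=|z|^{-(N-2)}\int_{\partial B}P\!\left(\frac{z}{|z|^{2}},y\right)d\sigma_y
=|z|^{-(N-2)}.
$$
Since $|z|=e^t|x|$, this is exactly \eqref{eq:2.2}.

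There is no genuine obstacle here: the lemma is a direct consequence of the normalization of $P$ transported by the Kelvin transform, and the identity is already contained in \cite[Lemmata~2.1 and~2.2]{FIK01}. The only point worth a line of care is the verification $z/|z|^{2}\in B$, which is precisely why the statement imposes $e^tx\in\Omega$ rather than merely $e^tx\neq y$; and the admissibility of $x\in\overline\Omega\setminus\Omega$ causes no difficulty, since at $t=0$ with $|x|=1$ one has $e^tx=x\in\partial\Omega$, which is excluded by the hypothesis and is consistent with the right-hand side $(e^t|x|)^{-(N-2)}=1$ being the expected boundary value.
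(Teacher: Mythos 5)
Your proof is correct. The paper does not actually prove Lemma~\ref{Lemma:2.1} itself---it only cites \cite[Lemmata~2.1 and~2.2]{FIK01}---and your direct computation (unwinding \eqref{eq:1.5} and \eqref{eq:1.4}, noting that $e^tx\in\Omega$ forces $e^tx/|e^tx|^2\in B$, pulling out the constant $|e^tx|^{-(N-2)}$, and invoking the normalization $\|P(w,\cdot)\|_{L^1(\partial B)}=1$ for $w\in B$, which gives $\int_{\partial B}P(w,y)\,d\sigma_y=1$ since $P(w,\cdot)\ge 0$ there) is exactly the intended argument and supplies the omitted proof.
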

\begin{lemma}
\label{Lemma:2.2}
Let $N\ge 3$ and
$\psi$ be a nonnegative measurable function on $\partial\Omega$ such that 
$\psi\in L^\infty(\partial\Omega)$.
Then 
\begin{align}
\label{eq:2.3}
 & 
 S_2(\cdot)\psi \in C^\infty(\overline{\Omega}\times(0,\infty))\cap C^\infty(\Omega\times[0,\infty)),\\
\notag 
 &  -\Delta_x S_2(t)\psi=0\quad\mbox{in}\quad\Omega\quad\mbox{for any $t\ge 0$},\\
\notag
 &  S_2(t)\left[S_2(s)\psi\right]^b=S_2(t+s)\psi
 \quad\mbox{for $s>0$ and $t\ge 0$},\\
\label{eq:2.4}
 &  |[S_2(t)\psi](x)|\le e^{-(N-2)t}|x|^{-(N-2)}|\psi|_\infty\quad\mbox{in}\quad\overline{\Omega}\times[0,\infty). 
\end{align}
Here $[S_2(t)\psi]^b$ is the restriction of $S_2(t)\psi$ to $\partial\Omega$. 
Furthermore, for any $\theta\in(0,1)$, 
there exists $c_3>0$ such that for every nonnegative $\psi \in L^\infty(\partial\Omega)$
\begin{equation}
\label{eq:2.5}
 \|S_2(t)\psi\|_{C^{1,\theta}(\overline{\Omega})}\le c_3|\psi|_{C^{1,\theta}},
 \qquad t\ge 0.
\end{equation}
\end{lemma}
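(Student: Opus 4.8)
\textbf{Proof proposal for Lemma~\ref{Lemma:2.2}.}

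The plan is to transfer everything to the ball $B$ via the Kelvin transform and the substitution $x\mapsto e^tx$, exploiting that $K(x,y)=|x|^{-(N-2)}P(x/|x|^2,y)$ and $\mathcal K(x,y,t)=K(e^tx,y)$. First I would record the elementary fact that the Kelvin transform preserves harmonicity in dimension $N$: if $P(\cdot,y)$ is harmonic in $B$, then $x\mapsto |x|^{-(N-2)}P(x/|x|^2,y)$ is harmonic in $\Omega$, so $K(\cdot,y)$ and hence $x\mapsto \mathcal K(x,y,t)=K(e^tx,y)$ are harmonic in $\Omega$ for each fixed $t$; integrating against $\psi\,d\sigma_y$ (which is legitimate since $\psi\in L^\infty(\partial\Omega)$ and, for $x$ in a compact subset of $\Omega$, the kernel is bounded uniformly) gives $-\Delta_x S_2(t)\psi=0$ in $\Omega$ for every $t\ge0$. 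For the smoothness assertion \eqref{eq:2.3}: the map $(x,t)\mapsto e^tx$ is real-analytic from $\overline\Omega\times[0,\infty)$ into $\overline\Omega$ when $t>0$ forces $e^tx$ into the open set $\Omega$ (indeed $|e^tx|\ge e^t>1$), and $K$ is smooth on $\{(z,y):z\in\Omega,\ y\in\partial\Omega\}$ because $P$ is smooth up to the boundary away from the pole and $|z-y|$ stays bounded below; differentiation under the integral sign (justified by local boundedness of all $x$- and $t$-derivatives of $K(e^tx,y)$ uniformly in $y\in\partial\Omega$ on compact subsets) then yields $S_2(\cdot)\psi\in C^\infty$ on both $\overline\Omega\times(0,\infty)$ and $\Omega\times[0,\infty)$; note that on $\overline\Omega\times(0,\infty)$ one uses $t>0$ to keep $e^tx$ strictly inside $\Omega$ even when $|x|=1$, and on $\Omega\times[0,\infty)$ one uses $|x|>1$ to keep $e^tx\in\Omega$ even at $t=0$.

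For the bound \eqref{eq:2.4}, I would combine positivity with Lemma~\ref{Lemma:2.1}: since $P\ge0$ we have $K\ge0$ and $\mathcal K\ge0$, so
\[
|[S_2(t)\psi](x)|\le |\psi|_\infty\int_{\partial\Omega}\mathcal K(x,y,t)\,d\sigma_y
= |\psi|_\infty (e^t|x|)^{-(N-2)} = e^{-(N-2)t}|x|^{-(N-2)}|\psi|_\infty,
\]
valid on $\overline\Omega\times[0,\infty)$ because $e^tx\in\Omega$ whenever $x\in\overline\Omega$ and $(x,t)\ne(\partial\Omega,0)$, and by continuity at the excluded point. For the semigroup identity $S_2(t)[S_2(s)\psi]^b=S_2(t+s)\psi$ for $s>0$, $t\ge0$: fix $s>0$ and set $\phi:=[S_2(s)\psi]^b$ on $\partial\Omega$, which by \eqref{eq:2.3} is a well-defined function (even $C^{1,\theta}$, by the last part of the lemma, applied with $t=s$). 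The function $U(x,t):=[S_2(t)\phi](x)$ solves $-\Delta U=0$ in $\Omega$, $U(\cdot,0)=\phi=S_2(s)\psi|_{\partial\Omega}$ on $\partial\Omega$; I would verify the dynamical boundary condition $\partial_tU+\partial_\nu U=0$ for the kernel $\mathcal K$ directly from \eqref{eq:1.5}–\eqref{eq:1.4} (this is exactly the displayed system satisfied by $\mathcal K$ in the excerpt). Then both $U(\cdot,t)$ and $t\mapsto[S_2(t+s)\psi]$ are bounded solutions of the same linear harmonic/dynamical-boundary problem with the same initial datum on $\partial\Omega$, so they coincide by uniqueness; alternatively, one can prove the identity directly by a Fubini computation reducing to the reproducing property $\int_{\partial\Omega}K(e^tx,z)[S_2(s)\psi](z)\,d\sigma_z = [S_2(t+s)\psi](x)$, which in the ball is the statement that the Poisson extension of a Poisson extension restricted to $\partial B$, then Kelvin-transformed and rescaled, recomposes as $s,t$ add — this follows because for $z\in\Omega$ the harmonic function $w\mapsto K(w,z)$ on $\Omega$ has boundary trace $\delta_z$ and composing two Poisson-type kernels reproduces the harmonic extension. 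I would use the first (uniqueness) argument as the cleaner route.

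Finally, for the uniform $C^{1,\theta}$ estimate \eqref{eq:2.5}, I would write $[S_2(t)\psi](x)=\int_{\partial\Omega}K(e^tx,y)\psi(y)\,d\sigma_y$ and split: if $|x|\ge2$ the kernel and its $x$-derivatives are uniformly bounded and Hölder in $x\in\{|x|\ge2\}$ with constants independent of $t\ge0$ (the factor $e^t\ge1$ only helps), so the contribution is controlled by $|\psi|_\infty\le |\psi|_{C^{1,\theta}}$; the delicate region is a neighbourhood of $\partial\Omega$, $1\le|x|\le2$, and here one cannot pull derivatives onto $K$ uniformly because as $t\to0$ and $|x|\to1$ one approaches the singularity of $P$. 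Instead I would use the boundary trace: by continuity in $t$ it suffices to estimate at $t=0$, where $[S_2(0)\psi](x)=\int_{\partial\Omega}K(x,y)\psi(y)\,d\sigma_y$ is the Kelvin transform of the Poisson extension of $\psi$ from $\partial B$ into $B$. Classical Schauder/Poisson estimates (e.g.\ as in \cite{GT}) give $\|(\text{Poisson extension of }\psi)\|_{C^{1,\theta}(\overline B)}\le c|\psi|_{C^{1,\theta}(\partial B)}$, and the Kelvin transform is a smooth diffeomorphism between a neighbourhood of $\partial B$ in $\overline B$ and a neighbourhood of $\partial\Omega$ in $\overline\Omega$, hence bounded in $C^{1,\theta}$ norms on those compact collar neighbourhoods; this yields $\|S_2(0)\psi\|_{C^{1,\theta}(\overline\Omega)}\le c_3|\psi|_{C^{1,\theta}}$. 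For general $t>0$ one uses the semigroup property just established together with the boundary trace: $[S_2(t)\psi]^b=[S_2(t/2)\,[S_2(t/2)\psi]^b]^b$, but more directly, since $x\mapsto e^tx$ maps $\overline\Omega$ into $\{|z|\ge e^t\}\subset\Omega$ and $K(\cdot,y)$ is smooth with all derivatives bounded on $\{|z|\ge1\}\times\partial\Omega$ with bounds decaying like $|z|^{-(N-2)}$, the composed map $x\mapsto K(e^tx,y)$ is, for $t$ bounded away from $0$, manifestly $C^{1,\theta}$ in $x\in\overline\Omega$ uniformly, while for $t\in(0,t_0]$ one interpolates/continuously matches with the $t=0$ bound. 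The main obstacle is precisely this last, uniform-in-$t$ boundary-regularity step near $\partial\Omega$ as $t\to0$: the clean way around it is to invoke the classical Poisson-kernel $C^{1,\theta}$ estimate on $B$ at $t=0$ and then note that $S_2(t)$ for $t>0$ is even better behaved because $e^tx$ is pushed into the interior, so one obtains \eqref{eq:2.5} with a single constant $c_3$ by taking the maximum over the two regimes $t\in[0,t_0]$ (Kelvin transform of Poisson estimate, using continuity in $t$) and $t\ge t_0$ (direct differentiation under the integral, with the $e^{-(N-2)t}$ decay from \eqref{eq:2.4} giving uniform control).
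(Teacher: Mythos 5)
The paper itself contains no proof of this lemma: it is quoted from \cite{FIK01} (``By [Lemmata~2.1 and~2.2]{FIK01} we have the following two lemmata''), so there is no internal argument to compare yours against; judged on its own, your reconstruction is essentially correct. Harmonicity via the Kelvin transform plus dilation invariance, joint smoothness by differentiation under the integral (the key point being $|e^tx|\ge\max\{e^t,|x|\}>1$ on each of the two sets in \eqref{eq:2.3}, so $e^tx$ stays away from the singularity of $K$), and \eqref{eq:2.4} from nonnegativity of the kernel together with Lemma~\ref{Lemma:2.1} are all sound.

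Two steps need tightening. First, for the semigroup identity the route you call ``cleaner'' (uniqueness) is actually the weaker one: uniqueness for the dynamical-boundary problem on the unbounded domain $\Omega$ requires a comparison/uniqueness theorem with conditions at spatial infinity that is not available in this paper and would itself have to be proved, and identifying the two initial data already uses $S_2(0)[S_2(s)\psi]^b=S_2(s)\psi$ in $\Omega$, an instance of the very identity being established. The Fubini computation you list as an alternative is the one that closes without extra machinery: for $|z|=1$ one has $K(e^sz,y)=e^{-(N-2)s}P(e^{-s}z,y)$, and the required kernel identity reduces to $\int_{\partial B}P(w,z)P(e^{-s}z,y)\,d\sigma_z=P(e^{-s}w,y)$ with $w=e^{-t}x/|x|^2\in B$, which is the Poisson reproduction of $\zeta\mapsto P(e^{-s}\zeta,y)$, harmonic in a neighbourhood of $\overline B$. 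Second, in \eqref{eq:2.5} the uniformity of the constant as $t\to0^+$ is only asserted (``interpolates/continuously matches''); make it precise by observing from \eqref{eq:1.6} that $[S_2(t)\psi](x)=[S_2(0)\psi](e^tx)$, so for $t\in[0,t_0]$ composition with the dilation $x\mapsto e^tx$ (which maps $\overline\Omega$ into itself) inflates the $C^{1,\theta}(\overline{\Omega})$ norm of the $t=0$ extension, controlled by the classical Poisson--Schauder estimate on $B$ and the Kelvin transform as you describe, by at most the factor $e^{(1+\theta)t_0}$, while for $t\ge t_0$ differentiation under the integral with $|\nabla_z^jK(z,y)|\le C|z|^{-(N-2+j)}$ on $\{|z|\ge e^{t_0}\}$ gives bounds of order $e^{-(N-2)t}|\psi|_{L^\infty}$, as you indicate. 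With these repairs your argument is complete and is the natural proof one would expect to find in \cite{FIK01}.
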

Then we have: 
\begin{lemma}
\label{Lemma:2.3}
Let $N\ge3$ and $\psi\in C^{1,\theta}(\partial\Omega)$ with $\theta\in(0,1)$.
Let $F_1[\psi]$ be as in \eqref{eq:1.13}.
Then 
\begin{equation}
\label{eq:2.6}
|F_1[\psi](x,t)|\le c_3|x||\psi|_{C^{1,\theta}},\qquad x\in\overline\Omega,\quad t>0,
\end{equation}
and
\begin{equation}
\label{eq:2.7}
|F_1[\psi](x,t)|\le N(e^t|x|)^{-(N-2)}\frac{e^t|x|}{e^t|x|-1}|\psi|_{L^{\infty}},\qquad x\in\overline\Omega,\quad t>0,
\end{equation}
where $c_3$ is the constant given in \eqref{eq:2.5}.
\end{lemma}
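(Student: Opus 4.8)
The plan is to estimate the boundary integral defining $F_1[\psi]$ in \eqref{eq:1.13} by differentiating the kernel identity from Lemma~\ref{Lemma:2.1} and exploiting the representation $\mathcal K(x,y,t)=K(e^tx,y)$. The crucial observation is that $F_1[\psi](x,t)=\partial_t[S_2(t)\psi](x)$ when $\psi$ is smooth enough, and by the dynamical boundary condition satisfied by $\mathcal K$ one has, at least formally, $\partial_t\mathcal K=-\partial_\nu\mathcal K$ on $\partial\Omega$, which hints that $F_1$ should be controlled by the gradient of the harmonic extension $S_2(t)\psi$. More concretely, since $\mathcal K(x,y,t)=K(e^tx,y)$, the chain rule gives
\begin{equation}
\label{eq:prop-chain}
\partial_t\mathcal K(x,y,t)=(e^tx)\cdot(\nabla_z K)(e^tx,y),
\end{equation}
so that $F_1[\psi](x,t)=(e^tx)\cdot\nabla_z\Big(\int_{\partial\Omega}K(z,y)\psi(y)\,d\sigma_y\Big)\Big|_{z=e^tx}=(e^tx)\cdot(\nabla S_2(0)\psi)(e^tx)$, where $S_2(0)\psi$ is the bounded harmonic function in $\Omega$ with boundary value $\psi$; here I would be slightly careful about whether one should write this in terms of $S_2(0)\psi$ evaluated at $e^tx$ or directly differentiate under the integral, but in either reading the point is that $F_1[\psi](x,t)$ is $e^t|x|$ times a directional derivative of a harmonic extension of $\psi$, evaluated at the point $e^tx\in\Omega$.

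For the first bound \eqref{eq:2.6}: from the identification above, $|F_1[\psi](x,t)|\le e^t|x|\,\|\nabla S_2(0)\psi\|_{L^\infty(\overline\Omega)}\le e^t|x|\,\|S_2(0)\psi\|_{C^{1,\theta}(\overline\Omega)}\le c_3 e^t|x|\,|\psi|_{C^{1,\theta}}$ by \eqref{eq:2.5} with $t=0$. Wait—this produces $e^t|x|$, not $|x|$, so I would instead differentiate under the integral sign directly in the variable $x$: $F_1[\psi](x,t)=\int_{\partial\Omega}\partial_t\mathcal K(x,y,t)\psi(y)\,d\sigma_y=\partial_t[S_2(t)\psi](x)$, and since $[S_2(t)\psi](x)=[S_2(0)\,(S_2(t)\psi)^b\,](x)$ is harmonic with $C^{1,\theta}$ norm bounded by $c_3|\psi|_{C^{1,\theta}}$ uniformly in $t$ (again \eqref{eq:2.5}), I can write $\partial_t[S_2(t)\psi]$ using \eqref{eq:prop-chain} as $x\cdot(\nabla_z[\text{harmonic ext.}])$ where the harmonic extension in question is the one whose boundary data is $(S_2(0)\psi)^b$ pulled back appropriately; the factor that survives is exactly $|x|$ (not $e^t|x|$) because differentiating $K(e^tx,y)$ in $t$ at fixed $x$ brings down $\frac{d}{dt}(e^tx)=e^tx$, but then re-expressing in terms of a $t=0$ extension reabsorbs the $e^t$ into the (uniformly bounded) $C^{1,\theta}$ norm, leaving $|x|$. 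This gives \eqref{eq:2.6} with the constant $c_3$.

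For the second bound \eqref{eq:2.7}: here the strategy is different and relies on Lemma~\ref{Lemma:2.1} rather than on $C^{1,\theta}$ regularity. Differentiating the identity \eqref{eq:2.2} in $t$ gives $\int_{\partial\Omega}\partial_t\mathcal K(x,y,t)\,d\sigma_y=-(N-2)(e^t|x|)^{-(N-1)}e^t|x|=-(N-2)(e^t|x|)^{-(N-2)}$, which controls the integral of $\partial_t\mathcal K$ but not its absolute value. To get an $L^\infty$-type bound on $|\psi|_{L^\infty}$ I would estimate $\int_{\partial\Omega}|\partial_t\mathcal K(x,y,t)|\,d\sigma_y$ directly. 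Using \eqref{eq:prop-chain} and the explicit Poisson/Kelvin kernel formula \eqref{eq:1.4}, $|\partial_t\mathcal K(x,y,t)|=|(e^tx)\cdot\nabla_z K(z,y)|_{z=e^tx}\le e^t|x|\,|\nabla_z K(e^tx,y)|$, and one computes (from $K(z,y)=|z|^{-(N-2)}P(z/|z|^2,y)=c_N|z|^{-(N-2)}\frac{1-|z|^{-2}}{||z|^{-2}z-y|^N}$, which simplifies to $c_N\frac{|z|^2-1}{|z|^2|z-y|^N}\cdot|z|^{\,?}$ — I would carry out this simplification carefully) that $|\nabla_z K(z,y)|$ is bounded by a constant times $K(z,y)/(|z|-1)$ plus lower-order terms, whence $\int_{\partial\Omega}|\partial_t\mathcal K(x,y,t)|\,d\sigma_y\le N\,\frac{e^t|x|}{e^t|x|-1}\int_{\partial\Omega}K(e^tx,y)\,d\sigma_y=N\,\frac{e^t|x|}{e^t|x|-1}(e^t|x|)^{-(N-2)}$ by Lemma~\ref{Lemma:2.1}. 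Multiplying by $|\psi|_{L^\infty}$ gives \eqref{eq:2.7}. The main obstacle is the pointwise bound $|\nabla_z K(z,y)|\le C\,K(z,y)/(|z|-1)$ with the correct constant $N$: this requires the explicit differentiation of the Kelvin-transformed Poisson kernel and a careful bookkeeping of which terms scale like $1/(|z|-1)$ and which are harmless, in order to land exactly on the constant $N$ rather than some larger dimensional constant.
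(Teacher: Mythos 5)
Your overall strategy coincides with the paper's: both bounds rest on the identity $\partial_t\mathcal K(x,y,t)=e^tx\cdot(\nabla K)(e^tx,y)=x\cdot\nabla_x\mathcal K(x,y,t)$, with \eqref{eq:2.6} obtained from the uniform-in-$t$ estimate \eqref{eq:2.5} and \eqref{eq:2.7} from a pointwise bound on the radial derivative of $K$ combined with \eqref{eq:2.2}. For \eqref{eq:2.6}, though, your write-up is needlessly tangled: the clean statement is simply $F_1[\psi](x,t)=x\cdot\nabla_x[S_2(t)\psi](x)$ (the factor $e^t$ from differentiating $K(e^tx,y)$ in $t$ is exactly the Jacobian factor of $\nabla_x$ acting on $K(e^tx,y)$, so nothing needs to be ``reabsorbed''), and then \eqref{eq:2.5}, which already holds for every $t\ge0$, gives $|\nabla_x[S_2(t)\psi](x)|\le c_3|\psi|_{C^{1,\theta}}$; the detour through $S_2(0)$ and the semigroup property is not needed and, as written, your first attempt produces the wrong factor $e^t|x|$.

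The genuine gap is in \eqref{eq:2.7}: the entire content of that estimate is the pointwise inequality $|x\cdot\nabla K(x,y)|\le N\frac{|x|}{|x|-1}K(x,y)$, and you only announce it as ``the main obstacle'' requiring ``careful bookkeeping'' rather than proving it. The paper establishes it by explicitly differentiating the Kelvin-transformed Poisson kernel \eqref{eq:1.4} and arriving at the exact identity
\begin{equation*}
x\cdot\nabla K(x,y)=K(x,y)\left[2-N+\frac{2}{|x|^2-1}+\frac{N}{|x-y|^2}\left(1-x\cdot y\right)\right],
\end{equation*}
after which the two-sided bound $-\frac{1}{|x|-1}\le\frac{1-x\cdot y}{|x-y|^2}\le\frac12$ yields the constant $N$. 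Note that this computation uses the specific structure of the \emph{radial} directional derivative (via $x\cdot z=1$ with $z=x/|x|^2$, $|y|=1$, etc.); your proposed reduction $|(e^tx)\cdot\nabla_zK|\le e^t|x|\,|\nabla_zK|$ by Cauchy--Schwarz throws that structure away, and a bound on the full gradient $|\nabla_zK(z,y)|$ near the boundary singularity would not come with the same cancellations (the exact constant is immaterial for the later applications of the lemma, but the singular factor $(|z|-1)^{-1}$ must still be extracted, which again requires the explicit differentiation you have deferred). As it stands, the proposal is a correct plan for \eqref{eq:2.7} but not a proof of it.
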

\begin{proof}
Let $x\in\overline\Omega$ and $t>0$. 
We prove \eqref{eq:2.6}. 
It follows from \eqref{eq:1.5} that
\begin{equation}
\label{eq:2.8}
\begin{split}
\partial_t\mathcal K(x,y,t)
=e^tx\cdot(\nabla_x K)(e^t x,y)=x\cdot\nabla_x\mathcal K(x,y,t)\quad \text{for }\; y\in\partial\Omega.
\end{split}
\end{equation}
Then, 
by \eqref{eq:1.13}, \eqref{eq:2.8} and \eqref{eq:1.6} we have
\begin{equation}
\label{eq:2.9}
\begin{split}
F_1[\psi](x,t)
&
=\int_{\partial\Omega}\partial_t\mathcal K(x,y,t)\psi(y)\,d\sigma_y
\\
&
=\int_{\partial\Omega}x\cdot\nabla_x\mathcal K(x,y,t)\psi(y)\,d\sigma_y
=x\cdot \nabla_x[S_2(t)\psi](x).
\end{split}
\end{equation}
This together with \eqref{eq:2.5} implies \eqref{eq:2.6}.

We prove \eqref{eq:2.7}. 
Let $i$, $j\in\{1,\dots,N\}$ and $y\in\partial\Omega$. 
By \eqref{eq:1.4} we have
\begin{equation}
\label{eq:2.10}
\begin{split}
\partial_{x_i}K(x,y)
&
=(2-N)|x|^{-N}x_i P\left(\frac{x}{|x|^2},y\right)+|x|^{-(N-2)}\partial_{x_i}P\left(\frac{x}{|x|^2},y\right)
\\
&
=(2-N)\frac{x_i}{|x|^2}K(x,y)+|x|^{-(N-2)}\sum_{j=1}^N\frac{\partial z_j}{\partial x_i}\partial_{z_j}P\left(z,y\right),
\end{split}
\end{equation}
where $z=x/|x|^2$. 
Since 
\begin{equation*}
\begin{split}
 & \frac{\partial z_j}{\partial x_i}=\delta_{ij}|z|^2-2z_iz_j,\\
 & \partial_{z_j}P(z,y)=-c_N|z-y|^{-N-2}\bigg(2z_j|z-y|^2+N(1-|z|^2)(z_j-y_j)\bigg),
\end{split}
\end{equation*}
we obtain 
\begin{equation}
\label{eq:2.11}
\begin{split}
&
\sum_{j=1}^N\frac{\partial z_j}{\partial x_i}\partial_{z_j}P\left(z,y\right)
\\
&
=-c_N|z-y|^{-N-2}\sum_{j=1}^N(\delta_{ij}|z|^2-2z_iz_j)\bigg(2z_j|z-y|^2+N(1-|z|^2)(z_j-y_j)\bigg)
\\
&
=c_N|z-y|^{-N-2}\bigg[2z_i|z|^2|z-y|^2+N(1-|z|^2)(|z|^2z_i+|z|^2y_i-2z_i(z\cdot y))\bigg]
\\
&
=P(z,y)\bigg[2z_i|z|^2(1-|z|^2)^{-1}+N|z-y|^{-2}(|z|^2z_i+|z|^2y_i-2z_i(z\cdot y))\bigg].
\end{split}
\end{equation}
Since $x\cdot z=1$, $|x|^{-1}=|z|$, $|x|^2(z\cdot y)=2(x\cdot y)$ and $|y|=1$, 
by \eqref{eq:1.4}, \eqref{eq:2.10} and \eqref{eq:2.11} we see that  
\begin{equation}
\label{eq:2.12}
\begin{split}
 & x\cdot\nabla K(x,y)
 =(2-N)K(x,y)+|x|^{-(N-2)}P(z,y)\\
 & \qquad\times\biggr[2(x\cdot z)\frac{|z|^2}{1-|z|^2}+N|z-y|^{-2}(|z|^2(x\cdot z)+|z|^2(x\cdot y)-2(x\cdot z)(z\cdot y))\biggr]\\
 & =K(x,y)\left[2-N+\frac{2}{|x|^2-1}+\frac{N}{|z-y|^2|x|^2}(1-x\cdot y)\right]\\
 & =K(x,y)\left[2-N+\frac{2}{|x|^2-1}+\frac{N}{|x-y|^2}(1-x\cdot y)\right].
\end{split}
\end{equation}
Since
$$
\frac{1}{2}\ge \frac{1-x\cdot y}{|x-y|^2}
\ge -\frac{|x|-1}{|x|^2-2x\cdot y+1}
\ge -\frac{|x|-1}{|x|^2-2|x|+1}=-\frac{1}{|x|-1},
$$
it follows from \eqref{eq:2.12} 
that
\begin{equation}
\label{eq:2.13}
|x\cdot\nabla K(x,y)|\le N\frac{|x|}{|x|-1}K(x,y). 
\end{equation}
This together with \eqref{eq:2.9} and \eqref{eq:2.2} implies 
\begin{align*}
|F_1[\psi](x,t)|
 & \le\int_{\partial\Omega} e^tx\cdot(\nabla_x K)(e^t x,y)|\psi(y)|\,d\sigma_y\\
 & \le N\frac{e^t|x|}{e^t|x|-1}|\psi|_{L^\infty}\int_{\partial\Omega}K(e^tx,y,t)\, d\sigma_y
\le N (e^t|x|)^{-(N-2)}\frac{e^t|x|}{e^t|x|-1}|\psi|_{L^{\infty}}.
\end{align*}
Thus \eqref{eq:2.7} holds, and the proof of Lemma~\ref{Lemma:2.3} is complete.
\end{proof}
By Lemma~\ref{Lemma:2.3} we obtain the following lemma. 
\begin{lemma}
\label{Lemma:2.4}
Let $N\ge3$ and $\theta\in(0,1)$. For $\psi \in C^{1,\theta}(\partial\Omega)$ set
\begin{equation}
\label{eq:2.14}
D_\varepsilon[\psi](x,t):=\int_0^t[S_1(\varepsilon^{-1}(t-s))F_1[\psi](s)](x)\,ds
\end{equation}
for $x\in\overline\Omega$, $t>0$ and $\varepsilon>0$.
Then $D_\varepsilon[\psi]$ and $\nabla D_\varepsilon[\psi]$ are bounded 
and smooth in $\overline\Omega\times(\tau,\infty)$ for $\tau>0$.
Furthermore, there exists $C>0$ such that 
for every $\psi \in C^{1,\theta}(\partial\Omega)$
\begin{eqnarray}
\label{eq:2.15}
 & & \|D_\varepsilon[\psi](t)\|_{L^\infty}
 \le C\varepsilon^{\frac{\alpha}{2}}t^{\frac{2-\alpha}{2}}|\psi|_{C^{1,\theta}},\\
\label{eq:2.16}
 & & \|\nabla D_\varepsilon[\psi](t)\|_{L^\infty}
 \le C\varepsilon^{\frac{\alpha}{2}}t^{\frac{2-\alpha}{2}}|\psi|_{C^{1,\theta}},
\end{eqnarray}
for $t>0$ and $\varepsilon>0$,
where $\alpha$ is as in \eqref{eq:1.17}.
\end{lemma}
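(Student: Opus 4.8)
The plan is to estimate $D_\varepsilon[\psi]$ by splitting the time integral in \eqref{eq:2.14} at a point $s=s_*(t)$ to be chosen, using the two complementary bounds on $F_1[\psi]$ from Lemma~\ref{Lemma:2.3}. For $s$ near $t$ (where $\varepsilon^{-1}(t-s)$ is small) the elementary bound \eqref{eq:2.6}, namely $|F_1[\psi](\cdot,s)|\le c_3|x|\,|\psi|_{C^{1,\theta}}$, combined with the growth-free consequence of $(\mathrm{G_1})$ or $(\mathrm{G_2})$ (note $|x|$ is not bounded on $\Omega$, so one should instead feed the estimate $|x\cdot\nabla[S_2(s)\psi](x)|\le c_3|\psi|_{C^{1,\theta}}$ that actually came out of \eqref{eq:2.9}, i.e. the bound on $F_1$ is in $L^\infty(\Omega)$, giving $\|S_1(\varepsilon^{-1}(t-s))F_1[\psi](s)\|_{L^\infty}\le c_3|\psi|_{C^{1,\theta}}$ with no time weight), controls the short-time piece of the integral by $C|\psi|_{C^{1,\theta}}\cdot(t-s_*)$. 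For $s$ away from $t$ one uses \eqref{eq:2.7}: since $|F_1[\psi](x,s)|\le N(e^s|x|)^{-(N-2)}\frac{e^s|x|}{e^s|x|-1}|\psi|_{L^\infty}$, and on $\Omega$ one has $|x|\ge1$ so $\frac{e^s|x|}{e^s|x|-1}$ is bounded by a constant once $s$ is bounded below (and by $(1-e^{-s})^{-1}\sim s^{-1}$ as $s\to0$), this decays like $e^{-(N-2)s}$ in $s$; then apply $(\mathrm{G_1})$ with $p=q=\infty$ so that $S_1$ contributes no further growth, and integrate the resulting $\int_0^{s_*}$ of something like $e^{-(N-2)s}(1-e^{-s})^{-1}\,ds$, which is $O(1)$ for $N\ge4$ but diverges logarithmically near $s=0$ for $N=3$, producing a factor $\log$ or a small power of $s_*$.

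The key step is then the optimization: balancing $(t-s_*)$ against the bound from the far piece, with the $\varepsilon$-dependence entering through the constraint that the short-time regime is exactly where $\varepsilon^{-1}(t-s)\lesssim1$, i.e. $t-s_*\sim\varepsilon$. Choosing $s_*=t-c\varepsilon$ (for $\varepsilon$ small compared to $t$) makes the near piece $O(\varepsilon)|\psi|_{C^{1,\theta}}$ and the far piece $O(1)|\psi|_{C^{1,\theta}}$ — but that only gives $O(\varepsilon^0)$, so the splitting must be done more carefully. The right device, mirroring the half-space argument of \cite{FIK02}, is to write $D_\varepsilon[\psi](t)=\int_0^t[S_1(\varepsilon^{-1}s)F_1[\psi](t-s)](x)\,ds$ after the change of variables $s\mapsto t-s$, and then to integrate by parts in $s$, or to insert the identity $F_1[\psi](\cdot,\sigma)=\partial_\sigma(\text{something})$ coming from \eqref{eq:2.8}–\eqref{eq:2.9}: indeed $F_1[\psi](x,s)=\partial_s\big([S_2(s)\psi](x)\big)$ because $\partial_t\mathcal K=\partial_s\mathcal K$ under the flow. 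Using $\partial_s[S_2(s)\psi]=F_1[\psi](s)$, an integration by parts converts one power of $s$ (or of $t-s$) in the kernel estimate into the gain: $\frac{d}{ds}S_1(\varepsilon^{-1}(t-s))=\varepsilon^{-1}\Delta S_1(\varepsilon^{-1}(t-s))$ is too singular, so instead one integrates by parts moving $\partial_s$ onto $S_2(s)\psi$, getting boundary terms $[S_1(\varepsilon^{-1}\cdot)S_2(\cdot)\psi]$ at $s=0,t$ plus $-\varepsilon^{-1}\int_0^t \Delta S_1(\varepsilon^{-1}(t-s))S_2(s)\psi\,ds$; then use $\|\Delta S_1(\sigma)f\|_{L^\infty}=\|\partial_\sigma S_1(\sigma)f\|_{L^\infty}$ and the kernel bound $\|\nabla^2 S_1(\sigma)f\|_{L^\infty}\le C\sigma^{-1}h(\sigma)^0\|f\|_{L^\infty}$ together with \eqref{eq:2.4}, $\|S_2(s)\psi\|_{L^\infty}\le e^{-(N-2)s}|\psi|_{L^\infty}$.

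The main obstacle is getting the correct power $\varepsilon^{\alpha/2}$ with the stated $t$-weight $t^{(2-\alpha)/2}$ and, in particular, pinning down why $\alpha=1$ is forced when $N=3$ but any $\alpha<2$ is allowed when $N\ge4$: this is exactly where the $L^1_s$-integrability of the decay of $F_1[\psi](\cdot,s)$ near $s=0$, weighted against the heat-semigroup smoothing, changes character. Concretely, after reducing (by the manipulations above) to estimating $\varepsilon^{-\alpha/2}\int_0^t (\varepsilon^{-1}(t-s))^{-\alpha/2}\,\|F_1[\psi](s)\|_{L^\infty}\,ds$ with $\|F_1[\psi](s)\|_{L^\infty}\lesssim \min\{1,s^{-1}e^{-(N-2)s}\}|\psi|_{C^{1,\theta}}$ — using \eqref{eq:2.6} for $s$ large and \eqref{eq:2.7} for $s$ small — one must check $\int_0^t (t-s)^{-\alpha/2}\min\{1,s^{-1}\}\,ds<\infty$: near $s=0$ this needs $\alpha/2<1$ always, and near $s=0$ the factor $s^{-1}$ (present only because of the $\frac{e^s|x|}{e^s|x|-1}$ blow-up) must be compensated, which for $N=3$ leaves no room beyond $\alpha=1$, whereas for $N\ge4$ the exponential $e^{-(N-2)s}$ is irrelevant near $0$ but the finer structure of $K(e^sx,y)$ on $\Omega$ gives an extra power of $s$ that kills the $s^{-1}$, restoring the full range $1<\alpha<2$. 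I would carry this out by: (1) the change of variables and splitting $\int_0^t=\int_0^{t/2}+\int_{t/2}^t$; (2) on $\int_{t/2}^t$, use \eqref{eq:2.6} and $(\mathrm{G_1})$ with $j=0$, $p=q=\infty$, bounding by $C\int_{t/2}^t(\varepsilon^{-1}(t-s))^{-\alpha/2}\,ds\cdot|\psi|_{C^{1,\theta}}=C\varepsilon^{\alpha/2}(t/2)^{(2-\alpha)/2}|\psi|_{C^{1,\theta}}$; (3) on $\int_0^{t/2}$, here $t-s\ge t/2$ so $S_1$ gives a factor $(\varepsilon^{-1}t)^{-\alpha/2}$ and $\int_0^{t/2}\|F_1[\psi](s)\|_{L^\infty}ds\le C|\psi|_{C^{1,\theta}}$ (finite by the above integrability discussion, using \eqref{eq:2.7} near $s=0$ and \eqref{eq:2.6} away), yielding $C\varepsilon^{\alpha/2}t^{-\alpha/2}\cdot t\cdot|\psi|_{C^{1,\theta}}$; combining gives \eqref{eq:2.15}; (4) repeat verbatim for $\nabla D_\varepsilon[\psi]$, now taking $j=1$ in $(\mathrm{G_1})$, which introduces $h(\varepsilon^{-1}(t-s))$, but on $\int_{t/2}^t$ one must check $\int_{t/2}^t (\varepsilon^{-1}(t-s))^{-\alpha/2}h(\varepsilon^{-1}(t-s))\,ds$ stays $\le C\varepsilon^{\alpha/2}t^{(2-\alpha)/2}$ — it does, because $h(\sigma)=\sigma^{-1/2}$ only for $\sigma<1$ and $\int_0^1\sigma^{-(\alpha+1)/2}d\sigma<\infty\iff\alpha<1$... which fails at $\alpha=1$; so for the gradient one instead differentiates $F_1$-via-$S_2$ using \eqref{eq:2.5}/\eqref{eq:2.6} directly and applies $(\mathrm{G_1})$ with $j=0$ to $\nabla F_1[\psi]$, avoiding the extra $h$, and the same two-region estimate closes; (5) smoothness and boundedness on $\overline\Omega\times(\tau,\infty)$ follow from $(\mathrm{G_3})$ applied to the integrand together with continuity of $F_1[\psi]$, exactly as for $S_1(t)\phi$. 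The delicate book-keeping at $N=3$, $\alpha=1$ — especially for the gradient estimate — is the part I expect to require the most care.
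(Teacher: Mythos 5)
There is a genuine gap: the mechanism that actually produces the factor $\varepsilon^{\alpha/2}$ is missing from your argument. The paper's proof rests on a \emph{spatial} decay estimate that is uniform in $s$: combining \eqref{eq:2.6} on $1\le|y|\le2$ (where $|y|$ is comparable to $|y|^{-\alpha}$) with \eqref{eq:2.7} on $|y|\ge2$ (where $e^s|y|-1\ge\tfrac12e^s|y|$) gives $|F_1[\psi](y,s)|\le C(|\psi|_{C^{1,\theta}}+|\psi|_{L^\infty})|y|^{-\alpha}$ for all $y\in\Omega$ and all $s>0$, with \emph{no} singularity in $s$. Feeding this into property $(\mathrm{G_2})$ with $\gamma=\alpha$, $j=0$ yields $\|S_1(\varepsilon^{-1}(t-s))F_1[\psi](s)\|_{L^\infty}\le C(\varepsilon^{-1}(t-s))^{-\alpha/2}|\psi|_{C^{1,\theta}}$, and a single integration in $s$ (no splitting at $t/2$, no integration by parts) gives \eqref{eq:2.15} since $\alpha<2$. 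Your step (2) writes down exactly this factor $(\varepsilon^{-1}(t-s))^{-\alpha/2}$ but attributes it to $(\mathrm{G_1})$ with $p=q=\infty$, which gives no decay at all; with that choice the near piece is $O(t)$, not $O(\varepsilon^{\alpha/2}t^{(2-\alpha)/2})$. The integration-by-parts device fares no better: it produces $\varepsilon^{-1}\int_0^t\|\Delta S_1(\varepsilon^{-1}(t-s))S_2(s)\psi\|_{L^\infty}\,ds\lesssim\int_0^t(t-s)^{-1}e^{-(N-2)s}\,ds=\infty$. Relatedly, your explanation of the dimension dependence of $\alpha$ is not the right one: the constraint $\alpha\le1$ for $N=3$ comes from the decay rate $|y|^{-(N-2)}$ of $F_1[\psi]$ at spatial infinity in \eqref{eq:2.7} (which is only $|y|^{-1}$ when $N=3$), not from the short-time singularity of $e^s|x|/(e^s|x|-1)$, and the ``extra power of $s$'' you invoke for $N\ge4$ is neither established nor needed.

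For the gradient estimate your own diagnosis is correct — $\int_0^1\sigma^{-(\alpha+1)/2}\,d\sigma$ diverges for $\alpha\ge1$ — but the proposed repair (applying $(\mathrm{G_1})$ with $j=0$ to $\nabla F_1[\psi]$) does not work: the Dirichlet heat semigroup on an exterior domain does not commute with $\nabla$, and \eqref{eq:2.5} controls only first derivatives of $S_2(t)\psi$, so no bound on $\nabla F_1[\psi]$ is available. The paper's resolution is to split at $s=t-\varepsilon$ and, on the short-time piece where $h(\tau_\varepsilon)=\tau_\varepsilon^{-1/2}$, to apply $(\mathrm{G_2})$ with the \emph{weaker} spatial decay $\gamma=\alpha-1$ (which $F_1[\psi]$ also satisfies), so that the total power is $h(\tau_\varepsilon)\tau_\varepsilon^{-(\alpha-1)/2}=\tau_\varepsilon^{-\alpha/2}$, which is integrable; on the long-time piece $h\equiv1$ and $\gamma=\alpha$ works as before. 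Trading spatial decay against the singular factor $h$ is the idea you would need to add to close \eqref{eq:2.16}.
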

\begin{proof}
We first prove \eqref{eq:2.15}.
By \eqref{eq:2.6} there is $C_1>0$ such that 
\begin{equation}
\label{eq:2.17}
|F_1[\psi](y,s)|\le \f{C_1}{4}|\psi|_{C^{1,\theta}}|y|
\le C_1|\psi|_{C^{1,\theta}}|y|^{-\alpha}
\le C_1|\psi|_{C^{1,\theta}}|y|^{-(\alpha-1)}
\end{equation}
for $y\in\Omega$ with $1\le |y|\le 2$, $s>0$  
and every $\psi \in C^{1,\theta}(\partial\Omega)$. 
Since 
$$
e^s|y|-1\ge\frac{1}{2}e^s|y|\quad\mbox{for}\quad |y|\ge 2,\,\,s>0, 
$$ 
by \eqref{eq:2.7} and \eqref{eq:1.17} we obtain
\begin{equation}
\label{eq:2.18}
|F_1[\psi](y,s)|\le 2N|\psi|_{L^\infty} |y|^{-(N-2)}\le 2N|\psi|_{L^\infty} |y|^{-\alpha}
\le 2N|\psi|_{L^\infty} |y|^{-(\alpha-1)}
\end{equation}
for every $y\in\Omega$ with $|y|\ge 2$, $s>0$ and $\psi \in C^{1,\theta}(\partial\Omega)$ 
Since $0\le\alpha<N$, 
by \eqref{eq:2.17} and \eqref{eq:2.18} 
we apply property~$(G_2)$ with $\gamma=\alpha$ and $j=0$ to obtain $C_2>0$ such that 
\begin{equation}
\label{eq:2.19}
|D_\varepsilon[\psi](x,t)|\le C_2(|\psi|_{C^{1,\theta}}+|\psi|_{L^\infty})
\int_0^t \tau_\varepsilon^{-\frac{\alpha}{2}}\,ds
\le \f{2C_2}{1-\f{\alpha }2}|\psi|_{C^{1,\theta}}\varepsilon^{\frac{\alpha}{2}}t^{\frac{2-\alpha}{2}}
\end{equation}
for 
$\psi \in C^{1,\theta}(\partial\Omega)$, $x\in\overline\Omega$, $t>0$ and $\varepsilon>0$, 
where $\tau_\varepsilon:=\varepsilon^{-1}(t-s)$. Thus \eqref{eq:2.15} holds. 
Furthermore, 
since $h(\tau_\varepsilon)=1$ for $t> \varepsilon$ and $s\in(0,t-\varepsilon)$, 
similarly to \eqref{eq:2.19}, 
by property~$(G_2)$ with $\gamma=\alpha$ and $j=1$ we have 
\begin{equation}
\label{eq:2.20}
\begin{split}
 & \biggr|\int_0^{t-\varepsilon}
\nabla [S_1(\tau_\varepsilon)F_1[\psi](s)](x)\,ds\biggr|
  \le C_2(|\psi|_{C^{1,\theta}}+|\psi|_{L^\infty})
\int_0^{t-\epsilon} \tau_\varepsilon^{-\frac{\alpha}{2}}\,ds
\le \f{2C_2}{1-\frac{\alpha}2}|\psi|_{C^{1,\theta}}\varepsilon^{\frac{\alpha}{2}}t^{\frac{2-\alpha}{2}}
\end{split}
\end{equation}
for every $\psi \in C^{1,\theta}(\partial\Omega)$, $x\in\overline\Omega$ and $t\ge\varepsilon>0$.
On the other hand, 
since $0\le\alpha-1<N$ 
and $h(\tau_\varepsilon)=\tau_\varepsilon^{-1/2}$ for $s\in[\max\{0,t-\varepsilon\},t]$, 
by \eqref{eq:2.17} and \eqref{eq:2.18} 
we apply property~$(G_2)$ with $\gamma=\alpha-1$ and $j=1$ to obtain $C_3>0$ such that 
\begin{equation}
\label{eq:2.21}
\begin{split}
 & \biggr|\int_{\max\{0,t-\varepsilon\}}^t
\nabla [S_1(\tau_\varepsilon)F_1[\psi](s)](x)\,ds\biggr|\\
 & \le C_3(|\psi|_{C^{1,\theta}}+|\psi|_{L^\infty})
\int_{\max\{0,t-\varepsilon\}}^t \tau_\varepsilon^{-\frac{1}{2}}\tau_\varepsilon^{-\frac{\alpha-1}{2}}\,ds
\le C_3|\psi|_{C^{1,\theta}}\varepsilon^{\frac{\alpha}{2}}t^{\frac{2-\alpha}{2}}
\end{split}
\end{equation}
for 
every $\psi \in C^{1,\theta}(\partial\Omega)$, $x\in\overline\Omega$, $t>0$ and $\varepsilon>0$. 
By \eqref{eq:2.20} and \eqref{eq:2.21} we have \eqref{eq:2.16}. 

We now fix $\psi \in C^{1,\theta}(\partial\Omega)$ and $\varepsilon >0$. 
It remains to prove that 
$D_\varepsilon[\psi]$ and $\nabla D_\varepsilon[\psi]$ are bounded and smooth 
in $\overline\Omega\times(\tau,\infty)$ for $\tau>0$.
It follows from the semigroup property of $S_1(t)$ that 
\begin{equation*}
\begin{split}
 & D_\varepsilon[\psi](x,t)
 =\int_0^t[S_1(\varepsilon^{-1}(t-s))F_1[\psi](s)](x)\,ds\\
 & =S_1(\varepsilon^{-1}(t-\tau/2))D_\varepsilon[\psi](x,\tau/2)
+\int_{\tau/2}^t[S_1(\varepsilon^{-1}(t-s))F_1[\psi](s)](x)\,ds
\end{split}
\end{equation*}
for $x\in\overline\Omega$ and $0<\tau<t<\infty$. 
We observe from \eqref{eq:2.15} and (${\rm G_3}$) that 
$$
(x,t)\mapsto S_1(\varepsilon^{-1}(t-\tau/2))D_\varepsilon[\psi](x,\tau/2)
$$
is bounded and smooth in $\overline\Omega\times(\tau,\infty)$.
On the other hand it holds from \eqref{eq:1.13} that 
$$
(x,t)\mapsto F_1[\psi](x,t)=\partial_t[S_2(t)\psi](x).
$$
Then, by \eqref{eq:2.3}
we apply the same argument as in \cite[Section~16, Chapter~4]{LSU}
to see that 
$$
\int_{\tau/2}^t[S_1(\varepsilon^{-1}(t-s))F_1[\psi](s)](x)\,ds
$$
is bounded and smooth in $\overline\Omega\times(\tau,\infty)$.
Therefore we deduce that 
$D_\varepsilon[\psi]$ and $\nabla D_\varepsilon[\psi]$ are bounded and smooth 
in $\overline\Omega\times(\tau,\infty)$.
Thus Lemma~\ref{Lemma:2.4} follows.  
\end{proof}
%
\section{Proof of Theorem~\ref{Theorem:1.5}}
We introduce some notation. 
Let $T\in(0,\infty)$, $\varepsilon\in(0,1)$ and $\alpha$ be as in \eqref{eq:1.17}.
Let $L>0$. Set 
$$
X_{T,L}:=\bigg\{v\mid v, \nabla v\in C(\overline\Omega\times(0,T))\,:\,\|v\|_{X_{T,L}}<\infty\bigg\},
\quad
\|v\|_{X_{T,L}}:=\sup_{0<t<T}e^{-Lt}E_\varepsilon[v](t),
$$
where 
$$
E_\varepsilon[v](t):=\left(1+(\varepsilon^{-1}t)^{\frac{\alpha}{2}}\right)\|v(t)\|_{L^\infty}
+(\varepsilon^{-1}t)^{\frac{1}{2}}\left(1+(\varepsilon^{-1}t)^{\frac{\alpha-1}{2}}\right)\|\nabla v(t)\|_{L^\infty}.
$$
Then $X_{T,L}$ is a Banach space equipped with the norm $\|\cdot\|_{X_{T,L}}$. 
For the proof of assertion~(i) of Theorem~\ref{Theorem:1.5},
we will apply the contraction mapping theorem in $X_{T,L}$ 
to find a fixed point of 
\begin{equation}
\label{eq:3.1}
Q_\varepsilon[v](t)
:=S_1(\varepsilon^{-1}t)\Phi
-D_\varepsilon[\varphi_b](t)+\int_0^tS_1(\varepsilon^{-1}(t-s))F_2[v](s)\,ds,
\end{equation}
where $\Phi$, $F_2[v]$ and $D_\varepsilon[\varphi_b]$ are as in \eqref{eq:1.10}, \eqref{eq:1.14} and \eqref{eq:2.14}, respectively. 
To this end, we prepare two lemmata. 
\begin{lemma}
\label{Lemma:3.1}Let $N\ge3$ and $\beta\in(0,1)$.
There exists $C>0$ such that for every $T\in(0,\infty)$ and $L>0$,
\begin{equation}
\label{eq:3.2}
F_2[v](x,t)\le C(\varepsilon^{-1}t)^{-\frac{1}{2}}e^{Lt}|x|^{-(N-2)}\bigg\{1+|x|\bigg(\frac{t}{|x|-1}\bigg)^\beta\bigg\}\|v\|_{X_{T,L}}
\end{equation}
for $x\in\Omega$, $0<t<T$, $\varepsilon\in(0,1)$ and $v\in X_{T,L}$. 
\end{lemma}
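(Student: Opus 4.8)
The plan is to bound the two constituents of $F_2[v]$ separately: the boundary term $I_1(x,t):=\int_{\partial\Omega}K(x,y)\,\partial_\nu v(y,t)\,d\sigma_y$ and the memory term $I_2(x,t):=\int_0^t\int_{\partial\Omega}\partial_t\mathcal K(x,y,t-s)\,\partial_\nu v(y,s)\,d\sigma_y\,ds$, in such a way that the resulting constant $C$ depends only on $N$ and $\beta$. Note $|x|>1$ for $x\in\Omega$, so all the quantities below are well defined. The one structural fact about $v$ I will use is that, by the very definition of $\|\cdot\|_{X_{T,L}}$, $(\varepsilon^{-1}s)^{\frac12}\|\nabla v(s)\|_{L^\infty}\le E_\varepsilon[v](s)\le e^{Ls}\|v\|_{X_{T,L}}$, hence
$$
\|\nabla v(s)\|_{L^\infty}\le(\varepsilon^{-1}s)^{-\frac12}e^{Ls}\|v\|_{X_{T,L}}\le\varepsilon^{\frac12}s^{-\frac12}e^{Lt}\|v\|_{X_{T,L}}\qquad(0<s<t<T),
$$
together with the trivial bound $|\partial_\nu v(y,s)|\le\|\nabla v(s)\|_{L^\infty}$.

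For $I_1$ I would simply pull $\|\nabla v(t)\|_{L^\infty}$ out of the integral and invoke $\int_{\partial\Omega}K(x,y)\,d\sigma_y=|x|^{-(N-2)}$, which is the case $t=0$ of \eqref{eq:2.2}. This at once gives $|I_1(x,t)|\le(\varepsilon^{-1}t)^{-\frac12}e^{Lt}|x|^{-(N-2)}\|v\|_{X_{T,L}}$, which accounts for the summand $1$ in the braces of \eqref{eq:3.2}.

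The core of the argument is $I_2$, and it starts with a kernel estimate. By \eqref{eq:2.8} we have $\partial_t\mathcal K(x,y,\tau)=(e^\tau x)\cdot(\nabla_x K)(e^\tau x,y)$, so \eqref{eq:2.13} applied at $z=e^\tau x\in\Omega$ together with \eqref{eq:2.2} yields
$$
\int_{\partial\Omega}|\partial_t\mathcal K(x,y,\tau)|\,d\sigma_y\le N\,\frac{e^\tau|x|}{e^\tau|x|-1}\,(e^\tau|x|)^{-(N-2)}=N\,\frac{(e^\tau|x|)^{-(N-3)}}{e^\tau|x|-1}.
$$
Since $N\ge3$ and $e^\tau|x|\ge|x|\ge1$, the numerator is $\le|x|^{-(N-3)}=|x|^{-(N-2)}|x|$; and since $e^\tau\ge1+\tau$ and $|x|\ge1$, the denominator satisfies $e^\tau|x|-1\ge(|x|-1)+\tau$. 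Hence
$$
\int_{\partial\Omega}|\partial_t\mathcal K(x,y,\tau)|\,d\sigma_y\le N\,|x|^{-(N-2)}\,\frac{|x|}{(|x|-1)+\tau}.
$$
Inserting this into $I_2$ with $\tau=t-s$, extracting $\|\nabla v(s)\|_{L^\infty}\le\varepsilon^{\frac12}s^{-\frac12}e^{Lt}\|v\|_{X_{T,L}}$, and then using the elementary inequality $(|x|-1)+(t-s)\ge(|x|-1)^\beta(t-s)^{1-\beta}$ (Young, valid since $\beta\in(0,1)$) followed by $\int_0^t s^{-\frac12}(t-s)^{\beta-1}\,ds=c_\beta\,t^{\beta-\frac12}$ with $c_\beta:=\int_0^1\sigma^{-1/2}(1-\sigma)^{\beta-1}\,d\sigma<\infty$ (finiteness needs $\beta>0$), I obtain, after writing $\varepsilon^{\frac12}t^{\beta-\frac12}=(\varepsilon^{-1}t)^{-\frac12}t^\beta$ and $|x|\,(|x|-1)^{-\beta}\,t^\beta=|x|\,(t/(|x|-1))^\beta$,
$$
|I_2(x,t)|\le N c_\beta\,(\varepsilon^{-1}t)^{-\frac12}e^{Lt}|x|^{-(N-2)}\,|x|\Big(\frac{t}{|x|-1}\Big)^\beta\|v\|_{X_{T,L}}.
$$
Summing the estimates for $I_1$ and $I_2$ and taking $C:=\max\{1,\,N c_\beta\}$, which depends only on $N$ and $\beta$ and is uniform in $T,L,\varepsilon$ and $v$, gives \eqref{eq:3.2}.

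The step I expect to be the real obstacle is the kernel bound: the naive estimate $\frac{e^\tau|x|}{e^\tau|x|-1}\le\frac{|x|}{|x|-1}$ would only deliver the weaker factor $|x|\,\frac{t}{|x|-1}$, that is, the exponent $\beta=1$, so one has to retain the additive $\tau$ in $e^\tau|x|-1\ge(|x|-1)+\tau$. That is precisely what allows Young's inequality to peel off the $(t-s)$-singularity so that it is absorbed by the integrable weight $s^{-1/2}(t-s)^{\beta-1}$, thereby trading the factor $(|x|-1)^{-1}$ for $(|x|-1)^{-\beta}$ at the cost of the harmless power $t^\beta$ — exactly the shape demanded by \eqref{eq:3.2}. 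Everything else is routine bookkeeping.
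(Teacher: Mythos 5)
Your proposal is correct and follows essentially the same route as the paper: the same splitting of $F_2$ into the boundary term and the memory term, the bound $|I_1|\le\|\nabla v(t)\|_{L^\infty}|x|^{-(N-2)}$ via \eqref{eq:2.2}, and the same kernel estimate from \eqref{eq:2.8} and \eqref{eq:2.13} combined with an interpolation of the denominator $e^\tau|x|-1$ between $\tau$ and $|x|-1$ to produce the $(t/(|x|-1))^\beta$ factor after the Beta-function integral. The only cosmetic difference is that you pass through $e^\tau|x|-1\ge(|x|-1)+\tau$ and weighted AM--GM, whereas the paper uses $e^\tau|x|-1\ge\max\{\tau,|x|-1\}\ge\tau^{1-\beta}(|x|-1)^\beta$ directly.
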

\noindent
\begin{proof}
Let $T>0$, $\varepsilon\in(0,1)$ and $v\in X_{T,L}$. 
It follows from \eqref{eq:1.14} that 
\begin{equation}
\label{eq:3.3}
F_2[v](x,t)=F_2'[v](x,t)+F_2''[v](x,t)
\end{equation}
for $x\in\Omega$ and $0<t<T$, 
where 
\begin{align*}
 & F_2'[v](x,t):=\int_{\partial\Omega}K(x,y)\partial_{\nu}v(y,t)\, d\sigma_y,\\
 & F_2''[v](x,t):=\int_0^t\int_{\partial\Omega}\partial_t \mathcal K(x,y,t-s)\partial_{\nu}v(y,s)\, d\sigma_y\,ds.
\end{align*}
Since 
\begin{equation}
\label{eq:3.4}
\sup_{0<t<T}\,e^{-Lt}(\varepsilon^{-1}t)^{\frac{1}{2}}\|\nabla v(t)\|_{L^\infty}\le\|v\|_{X_{T,L}}, 
\end{equation}
by \eqref{eq:2.2} we see that 
\begin{equation}
\label{eq:3.5}
\begin{split}
|F_2'[v](x,t)|
&\le \int_{\partial\Omega}K(x,y)|\partial_{\nu}v(y,t)|\, d\sigma_y\\
&
 \le \|\nabla v(t)\|_{L^\infty}|x|^{-(N-2)}
\le (\varepsilon^{-1}t)^{-\frac{1}{2}}e^{Lt}|x|^{-(N-2)}\|v\|_{X_{T,L}}
\end{split}
\end{equation}
for $x\in\Omega$ and $t>0$. 
On the other hand,
since
$$
e^t|x|-1\ge e^t-1\ge t,\qquad
e^t|x|-1\ge |x|-1,
$$
for $x\in\Omega$ and $t>0$, 
for any $\beta\in(0,1)$
it follows from \eqref{eq:2.8} and \eqref{eq:2.13} that
$$
|\partial_t \mathcal K(x,y,t)|
\le N\frac{e^t|x|}{e^t|x|-1}\mathcal K(x,y,t)
\le N\frac{e^t|x|}{t^{1-\beta}(|x|-1)^\beta}\mathcal K(x,y,t)
$$
for $x\in\Omega$ and $0<t<T$. 
Then, by \eqref{eq:2.2} and \eqref{eq:3.4} we obtain
\begin{equation}
\label{eq:3.6}
\begin{split}
|F_2''[v](x,t)|
& \le N\frac{|x|}{(|x|-1)^\beta}\int_0^t \frac{e^{t-s}}{(t-s)^{1-\beta}}
\int_{\partial\Omega}\mathcal K(x,y,t-s)\|\nabla v(s)\|_{L^\infty}\,d\sigma_y\,ds\\
&  =N\frac{|x|}{(|x|-1)^\beta}\int_0^t \frac{e^{t-s}}{(t-s)^{1-\beta}}(e^{t-s}|x|)^{-(N-2)}\|\nabla v(s)\|_{L^\infty}\,ds\\
& \le N\frac{|x|^{-(N-3)}}{(|x|-1)^\beta}\|v\|_{X_{T,L}}
\int_0^t (t-s)^{-1+\beta}(\varepsilon^{-1}s)^{-\frac{1}{2}}e^{Ls}\,ds\\
&
\le C\varepsilon^{\frac{1}{2}}\frac{|x|^{-(N-3)}}{(|x|-1)^\beta}t^{-\frac{1}{2}+\beta}e^{Lt}\|v\|_{X_{T,L}}
\\
&
=  C(\varepsilon^{-1}t)^{-\frac{1}{2}}e^{Lt}|x|^{-(N-3)}\bigg(\frac{t}{|x|-1}\bigg)^\beta\|v\|_{X_{T,L}}
\end{split}
\end{equation}
for $x\in\Omega$ and $0<t<T$ and $C=N\int_0^1(1-\sigma)^{\beta-1}\sigma^{-\f12}d\sigma$. 
Therefore, by \eqref{eq:3.3}, \eqref{eq:3.5} and \eqref{eq:3.6} we have \eqref{eq:3.2}.
Thus Lemma~\ref{Lemma:3.1} follows. 
\end{proof}
\begin{lemma}
\label{Lemma:3.2}Let $N\ge 3$. 
For any $T\in(0,\infty)$, $L>0$, $v\in X_{T,L}$ and $\varepsilon\in(0,1)$,
set
\begin{equation}
\label{eq:3.7}
\tilde{D}_\varepsilon[v](t):=
\int_0^tS_1(\varepsilon^{-1}(t-s))F_2[v](s)\,ds.
\end{equation}
Then for every $T>0$ there exists $L_*>0$ such that 
\begin{equation}
\label{eq:3.8}
\|\tilde{D}_\varepsilon[v]\|_{X_{T,L}}\le\frac{1}{2}\|v\|_{X_{T,L}}
\end{equation}
for $v\in X_{T,L}$, $\varepsilon\in(0,1)$ and $L\ge L_*$.
Furthermore,
for any $0<\tau<T$ and $j\in\{0,1\}$,
$$
\nabla^j\tilde{D}_\varepsilon[v]\in C^\infty(\Omega\times(\tau,T))\cap BC^1(\overline\Omega\times(\tau,T)).
$$
\end{lemma}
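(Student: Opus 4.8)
The plan is to split the argument into the two estimates contained in the $X_{T,L}$-norm — the sup-bound weighted by $1+(\varepsilon^{-1}t)^{\alpha/2}$ and the gradient bound weighted by $(\varepsilon^{-1}t)^{1/2}(1+(\varepsilon^{-1}t)^{(\alpha-1)/2})$ — and to control each by feeding the pointwise estimate \eqref{eq:3.2} from Lemma~\ref{Lemma:3.1} into the kernel bounds $(\mathrm{G}_1)$ and $(\mathrm{G}_2)$ for $S_1$. First I would record that, by \eqref{eq:3.2}, the integrand $F_2[v](s)$ decomposes into two types of terms as a function of $y$: a term of size $(\varepsilon^{-1}s)^{-1/2}e^{Ls}|y|^{-(N-2)}\|v\|_{X_{T,L}}$, and a term of size $(\varepsilon^{-1}s)^{-1/2}e^{Ls}|y|^{-(N-3)}(s/(|y|-1))^{\beta}\|v\|_{X_{T,L}}$. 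For the first type one applies $(\mathrm{G}_1)$ with $p=q=\infty$ (for the sup-bound) and $j=1$ (for the gradient), noting $N-2\ge 1$, or alternatively $(\mathrm{G}_2)$ with $\gamma=N-2$; this produces a factor $(1+\varepsilon^{-1}(t-s))^{-(N-2)/2}h(\varepsilon^{-1}(t-s))^j$ times the $s$-integrable weight $(\varepsilon^{-1}s)^{-1/2}e^{Ls}$. The time integral $\int_0^t$ of the product of these factors is estimated by a beta-function computation exactly as in \eqref{eq:3.6}, and the resulting power of $t$ and $\varepsilon$ is matched against the weights defining $E_\varepsilon$.

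The more delicate contribution is the second type, because of the boundary singularity $(|y|-1)^{-\beta}$ near $\partial\Omega$ and the only mild decay $|y|^{-(N-3)}$ at infinity (which for $N=3$ is no decay at all). Here I would split $\Omega$ into $\{1<|y|<2\}$ and $\{|y|\ge 2\}$ just as in the proof of Lemma~\ref{Lemma:2.4}: on the outer region $|y|^{-(N-3)}(|y|-1)^{-\beta}\lesssim |y|^{-(N-3-\beta)}$, which since $\beta<1$ has decay exponent $N-3-\beta$ that can be taken in $(0,N)$ after possibly enlarging it to an admissible $\gamma$ (for $N=3$ one simply uses boundedness on bounded regions together with the $e^{-(N-2)t}$-type decay, or chooses $\gamma$ slightly positive); on the inner region $1<|y|<2$ one uses that $(|y|-1)^{-\beta}$ is integrable in $\mathbb R^N$ since $\beta<1<N$, so the corresponding part of $F_2[v](s)$ lies in $L^{q}$ for a suitable $q$ and one applies $(\mathrm{G}_1)$. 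In both cases the extra factor $s^{\beta}$ appearing in \eqref{eq:3.2} only improves the time integrability near $s=0$. The key point — and the reason the exponential weight $e^{-Lt}$ is in the norm — is that after performing the $s$-integral one is left with a constant times $e^{Lt}$ divided by $L^{\delta}$ for some $\delta>0$ (coming from $\int_0^t e^{-L(t-s)}(t-s)^{-1+\beta}\,ds \lesssim L^{-\beta}e^{Lt}$, together with the polynomial-in-$t$ factors bounded on $(0,T)$); choosing $L=L_*(T)$ large enough makes this constant at most $1/2$, giving \eqref{eq:3.8}.

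For the regularity assertion I would argue as for $D_\varepsilon[\psi]$ in Lemma~\ref{Lemma:2.4}: use the semigroup identity
$$
\tilde D_\varepsilon[v](t)=S_1(\varepsilon^{-1}(t-\tau/2))\tilde D_\varepsilon[v](\tau/2)
+\int_{\tau/2}^t S_1(\varepsilon^{-1}(t-s))F_2[v](s)\,ds
$$
for $\tau/2<t<T$. By \eqref{eq:3.8} (and the $\varepsilon$ now fixed) $\tilde D_\varepsilon[v](\tau/2)\in L^\infty(\Omega)$, so the first term is bounded and $C^\infty$ in $\overline\Omega\times(\tau,T)$ by $(\mathrm{G}_3)$; for the integral term one notes that $F_2[v](\cdot,s)$ is, away from $s=0$, a smooth bounded function of its spatial variable on $\overline\Omega$ (it solves $-\Delta_x F_2[v]=0$ in $\Omega$ and has the boundedness supplied by \eqref{eq:3.2}), so the standard parabolic regularity argument of \cite[Chapter~4, Section~16]{LSU}, already invoked in Lemma~\ref{Lemma:2.4}, applies verbatim to give boundedness and smoothness, including up to first-order spatial derivatives, in $\overline\Omega\times(\tau,T)$; continuity of $\nabla\tilde D_\varepsilon[v]$ up to $\partial\Omega$ together with the Laplace/heat structure yields membership in $BC^1(\overline\Omega\times(\tau,T))$. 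The main obstacle throughout is bookkeeping the interplay between the three singular or near-singular factors — $(\varepsilon^{-1}s)^{-1/2}$ in time, $(|y|-1)^{-\beta}$ near the boundary, and the weak spatial decay for small $N$ — and verifying in each region that the exponent fed to $(\mathrm{G}_1)$ or $(\mathrm{G}_2)$ is admissible and produces, after the $s$-integration against $e^{-L(t-s)}$, exactly the $\varepsilon$- and $t$-powers demanded by the definition of $E_\varepsilon$.
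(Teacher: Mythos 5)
Your plan follows essentially the same route as the paper: feed the pointwise bound \eqref{eq:3.2} into $(\mathrm{G}_1)$/$(\mathrm{G}_2)$ after splitting $\Omega$ into the near-boundary region (where $(|y|-1)^{-\beta}$ is handled by $L^q$-integrability) and the far region (where the power decay is used), control the resulting time integrals by a beta-function-type computation, and exploit the weight $e^{-Lt}$ to make the contraction constant small for large $L$ --- which is exactly the content of the paper's Lemma~\ref{Lemma:3.3}, there proved with some extra care for the prefactor $t^\gamma$ near $t=0$ that your heuristic $\int_0^t e^{Ls}(t-s)^{-1+\beta}\,ds\lesssim L^{-\beta}e^{Lt}$ glosses over but which works out the same way. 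The regularity argument via the semigroup identity and \cite[Chapter~4, Section~16]{LSU} also matches the paper's proof.
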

For the proof of Lemma~\ref{Lemma:3.2} we prepare the following lemma. 
\begin{lemma}
\label{Lemma:3.3}
Let $0\le a<1$ and $0\le b<1$ be such that $0\le a+b<1$. 
Let $\gamma\ge 0$ and $T>0$. 
Then, for any $\delta>0$, 
there exists $L_*\ge 1$ such that 
$$
\sup_{0<t<T}e^{-Lt}t^\gamma\int_0^t e^{Ls}s^{-a}(t-s)^{-b}\,ds\le\delta\quad\mbox{for}\quad L\ge L_*.
$$
\end{lemma}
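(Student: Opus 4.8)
The plan is to prove Lemma~\ref{Lemma:3.3} by a splitting argument on the convolution integral, using the exponential weight $e^{Ls}$ to make the main contribution small when $L$ is large, and treating the short-time boundary behaviour separately so that the singularity at $s=0$ (from $s^{-a}$) and at $s=t$ (from $(t-s)^{-b}$) are both absorbed.

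First I would substitute $s=t\sigma$, $\sigma\in(0,1)$, to get
$$
t^\gamma\int_0^t e^{Ls}s^{-a}(t-s)^{-b}\,ds
= t^{\gamma+1-a-b}\int_0^1 e^{Lt\sigma}\sigma^{-a}(1-\sigma)^{-b}\,d\sigma,
$$
so the quantity to bound is $t^{\gamma+1-a-b}e^{-Lt}\int_0^1 e^{Lt\sigma}\sigma^{-a}(1-\sigma)^{-b}\,d\sigma$. Since $a+b<1$, the exponent $\gamma+1-a-b$ is positive (or at least $\ge\gamma\ge0$), and $\sigma^{-a}(1-\sigma)^{-b}$ is integrable on $(0,1)$ with $\int_0^1\sigma^{-a}(1-\sigma)^{-b}\,d\sigma = B(1-a,1-b) =: B_0 < \infty$. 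Setting $\lambda := Lt\in(0,LT]$, it suffices to show that
$$
\sup_{0<t<T}\; t^{\gamma+1-a-b}\, e^{-\lambda}\!\int_0^1 e^{\lambda\sigma}\sigma^{-a}(1-\sigma)^{-b}\,d\sigma
$$
can be made $\le\delta$ by choosing $L$ large.

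Next I would split the $\sigma$-integral at $\sigma = 1-\eta$ for a small $\eta\in(0,1)$ to be fixed. On $(1-\eta,1)$ one has $e^{\lambda\sigma}\le e^{\lambda}$, so $e^{-\lambda}\int_{1-\eta}^1 e^{\lambda\sigma}\sigma^{-a}(1-\sigma)^{-b}\,d\sigma \le (1-\eta)^{-a}\int_{1-\eta}^1(1-\sigma)^{-b}\,d\sigma \le (1-\eta)^{-a}\,\tfrac{\eta^{1-b}}{1-b}$, which, multiplied by the bounded factor $t^{\gamma+1-a-b}\le T^{\gamma+1-a-b}$, is smaller than $\delta/2$ once $\eta$ is chosen small enough (this choice depends only on $\delta,\gamma,a,b,T$, not on $L$). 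On the remaining range $(0,1-\eta)$ one has $e^{\lambda\sigma}\le e^{\lambda(1-\eta)}$, so
$$
e^{-\lambda}\!\int_0^{1-\eta} e^{\lambda\sigma}\sigma^{-a}(1-\sigma)^{-b}\,d\sigma
\le e^{-\lambda\eta}\,B_0,
$$
and multiplying by $t^{\gamma+1-a-b}\le T^{\gamma+1-a-b}$ gives a bound $T^{\gamma+1-a-b}B_0\,e^{-L t\eta}$. Since $t\mapsto t^{\gamma+1-a-b}e^{-Lt\eta}$ is maximised at $t=(\gamma+1-a-b)/(L\eta)$ with value $C(\gamma,a,b)(L\eta)^{-(\gamma+1-a-b)}$ (and is bounded by $T^{\gamma+1-a-b}$ anyway), this term tends to $0$ as $L\to\infty$; hence it is $\le\delta/2$ for all $L\ge L_*$, with $L_*$ depending on $\delta,\gamma,a,b,T$ and on the already-fixed $\eta$. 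Adding the two contributions yields the claimed bound $\le\delta$ for $L\ge L_*$, and enlarging $L_*$ if necessary ensures $L_*\ge1$.

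**The main obstacle** is really just bookkeeping: one must make sure the cutoff $\eta$ is chosen \emph{before} $L$ and depends only on the fixed data $(\delta,\gamma,a,b,T)$, so that the subsequent choice of $L_*$ is legitimate; and one should double-check the edge case where $\gamma+1-a-b$ could be small but is still positive (it equals $\gamma+(1-a-b)>0$ since $a+b<1$), so the power of $t$ never causes trouble near $t=0$ and is uniformly bounded near $t=T$. No delicate estimate is needed beyond the elementary bounds $e^{\lambda\sigma}\le e^\lambda$ and $e^{\lambda\sigma}\le e^{\lambda(1-\eta)}$ on the two pieces together with the finiteness of the Beta function $B(1-a,1-b)$.
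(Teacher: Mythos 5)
Your proof is correct, and it takes a genuinely different route from the paper's. You normalize the integral by the substitution $s=t\sigma$, which packages the $t$-dependence into the single prefactor $t^{\gamma+1-a-b}$ with a strictly positive exponent (since $a+b<1$), and then split the $\sigma$-integral into just two pieces at $\sigma=1-\eta$. The paper instead splits the original $s$-integral into three pieces $(0,\mu t)$, $(\mu t,(1-\mu)t)$, $((1-\mu)t,t)$; its middle piece only extracts $\int_0^t e^{Ls}\,ds=(e^{Lt}-1)/L$, i.e.\ an $L^{-1}$ gain multiplied by $t^{\gamma-a-b}$, whose exponent can be negative, forcing a case distinction between $\gamma\ge a+b$ and $\gamma<a+b$ and an explicit maximization of $f(t,L)=t^{\gamma-a-b}(1-e^{-Lt})/L$. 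Your decomposition avoids all of that: on $(0,1-\eta)$ you keep genuine exponential decay $e^{-L\eta t}$ against the uniformly bounded, positively-powered factor $t^{\gamma+1-a-b}$, so the elementary bound $\sup_{t>0}t^{p}e^{-ct}=(p/c)^{p}e^{-p}\to 0$ as $c\to\infty$ finishes the argument. The only point worth being careful about — fixing $\eta$ from $(\delta,\gamma,a,b,T)$ before choosing $L_*$ — you handle explicitly. Your version is shorter and arguably cleaner; the paper's buys nothing extra here beyond being phrased directly in the unsubstituted variable.
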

\begin{proof}
Let $T>0$, $\gamma\ge0$ and $\delta>0$. 
For any $\mu\in(0,1)$ and $L>0$, we have
\begin{equation*}
\begin{split}
 & 
 \int_0^t e^{Ls}s^{-a}(t-s)^{-b}\,ds
 \\
 & 
 =\bigg(\int_0^{\mu t}+\int_{\mu t}^{(1-\mu)t}+\int_{(1-\mu)t}^t\bigg) e^{Ls}s^{-a}(t-s)^{-b}\,ds
 \\
 & 
 \le (1-\mu)^{-b}t^{-b}e^{Lt}\int_0^{\mu t}s^{-a}\,ds+\mu^{-a-b}t^{-a-b}\int_0^t e^{Ls}\,ds
 +(1-\mu)^{-a}t^{-a}e^{Lt}\int_{(1-\mu)t}^t(t-s)^{-b}\,ds
 \\
 & 
 =
 \frac{1}{1-a}(1-\mu)^{-b}\mu^{1-a}t^{1-a-b}e^{Lt}
 +\frac{\mu^{-a-b}t^{-a-b}}{L}(e^{Lt}-1)
 +\frac{1}{1-b}(1-\mu)^{-a}\mu^{1-b}t^{1-a-b}e^{Lt}
\end{split}
\end{equation*}
for $0<t<T$ and $L>0$. 
Then, since $a+b<1$ and $\gamma\ge0$, taking a sufficiently small $\mu\in(0,1/2)$ if necessary,
we obtain 
\begin{equation}
\label{eq:3.9}
\begin{split}
 e^{-Lt}t^\gamma\int_0^t e^{Ls}s^{-a}(t-s)^{-b}\,ds
 & 
 \le C(\mu^{1-a}+\mu^{1-b})t^{1-a-b+\gamma}+\mu^{-a-b}t^{-a-b+\gamma}\frac{1-e^{-Lt}}{L}
 \\
 & 
 \le C(\mu^{1-a}+\mu^{1-b})T^{1-a-b+\gamma}+\mu^{-a-b}t^{-a-b+\gamma}\frac{1-e^{-Lt}}{L}
 \\
 &
 \le \frac{\delta}{2}+\mu^{-a-b}t^{-a-b+\gamma}\frac{1-e^{-Lt}}{L}
\end{split}
\end{equation}
for $0<t<T$ and $L>0$, where $C=2^b(1-a)^{-1}+2^a(1-b)^{-1}$.
Let 
\[
 f(t,L):=t^{\gamma-a-b}\f{1-e^{-Lt}}{L}, \qquad t\in(0,T),\, L>0,
\]
Then we see that in the case of $\gamma\ge a+b$, 
$0\le f(t,L)\le T^{\gamma-a-b}L^{-1}$ for all $t\in(0,T)$ and $L>0$, 
and the choice $L_*=2\mu^{-a-b}\delta^{-1}T^{\gamma-a-b}$ verifies the lemma. 
If, on the other hand, $\gamma<a+b$, then for every $L>0$, $\limsup_{t\to0^+} f(t,L)=0$, and thus 
\[
 t_L:=\argmax\displaylimits_{t\in(0,T)} f(t,L)
\]
exists and satisfies $t_L\in(0,T)$ and 
\[
 \f{d}{dt} f(t,L)\vert_{t=t_L} = 0\; \text{ and hence }\;
 1-e^{-Lt_L} = \f{L}{a+b-\gamma}t_Le^{-Lt_L},
\]
so that 
\[
 f(t,L)\le f(t_L,L)=\frac{t_L^{\gamma+1-a-b}}{a+b-\gamma}e^{-Lt_L}\quad \text{for all } t\in(0,T), L>0.
\]
As, for any $L>0$,  
\[
 \sup_{s>0} s^{\gamma+1-a-b}e^{-Ls} = L^{-\gamma-(1-a-b)}(\gamma+1-a-b)^{\gamma+1-a-b}e^{-(\gamma+1-a-b)}, 
\]
we may conclude that also in the case $\gamma<a+b$, $\sup_{t\in(0,T)} f(t,L)\to 0$ as $L\to\infty$, which together with \eqref{eq:3.9} completes the proof of Lemma~\ref{Lemma:3.3}.
\end{proof}

We prove Lemma~\ref{Lemma:3.2}.
\begin{proof}[\textit{\textbf{Proof of Lemma~\ref{Lemma:3.2}}}]
Let $0<T<\infty$ and let $\alpha$ be as in \eqref{eq:1.17}. 
Let
\begin{equation}
\label{eq:3.10}
0<\beta<\min\left\{\frac{\alpha-1}{N},2-\alpha\right\} \quad\mbox{if}\quad N\ge 4,
\qquad
0<\beta<\frac{1}{4}\quad\mbox{if}\quad N=3. 
\end{equation}
It follows from \eqref{eq:3.2} and \eqref{eq:1.17} that with  $C_1>0$ as in \eqref{eq:3.2} 
\begin{equation}
\label{eq:3.11}
\begin{split}
|F_2[v](y,s)|
 & \le C_1e^{Ls}(\varepsilon^{-1}s)^{-\frac{1}{2}}\|v\|_{X_{T,L}}
 \biggr[|y|^{-(N-2)}+s^\beta \eta_\beta(y)+s^\beta|y|^{-(N-3+\beta)}\chi_{\{|y|>2\}}\biggr]\\
 & \le C_1e^{Ls}\varepsilon^{\frac{1}{2}}s^{-\frac{1}{2}}\|v\|_{X_{T,L}}
 \biggr[|y|^{-(\alpha-1)}+s^\beta\eta_\beta(y)+
s^\beta|y|^{-(\alpha-1+\beta)}\chi_{\{|y|>2\}}\biggr]
\end{split}
\end{equation}
for $y\in\Omega$, $L>0$ and $0<s<T$, where $\eta_\beta(y):=(|y|-1)^{-\beta}\chi_{\{1\le|y|\le 2\}}$. 
By \eqref{eq:3.10} we have
\begin{equation}
\label{eq:3.12}
\eta_\beta\in L^{\frac{N}{\alpha-1}}(\Omega)\quad\mbox{if}\quad N\ge 4,
\qquad
\eta_\beta\in L^4(\Omega)\quad\mbox{if}\quad N\ge 3. 
\end{equation}

For $s>0$ and $\tau>0$, set
\begin{equation}
\label{eq:3.13}
 I(s,\tau):=
 s^{-\frac{1}{2}}
 \left[\tau^{-\frac{\alpha-1}{2}}
 +s^\beta\tau^{-d}+s^\beta \tau^{-\frac{\alpha-1+\beta}{2}}\right]
 \end{equation}
where $d=(\alpha-1)/2$ if $N\ge 4$ and $d=3/8$ if $N=3$. 
Then it follows that 
\begin{equation}
\label{eq:3.14}
I(s,\varepsilon^{-1}\tau)\le \varepsilon^{\frac{\alpha-1}{2}}I(s,\tau)
\quad\mbox{for}\quad s>0,\,\,\tau>0,\,\,\varepsilon \in(0,1).
\end{equation}
By \eqref{eq:3.11} and \eqref{eq:3.12} 
we apply properties~$(G_2)$ and $(G_1)$ to obtain $C_2>0$ such that
\begin{equation}
\label{eq:3.15}
\begin{split}
 & \|\nabla^j S_1(\tau_\varepsilon)F_2[v](s)\|_{L^\infty}\\
 & \le C_2e^{Ls}\varepsilon^{\frac{1}{2}}s^{-\frac{1}{2}}\|v\|_{X_{T,L}}h(\tau_\varepsilon)^j
\left[(1+\tau_\varepsilon)^{-\frac{\alpha}{2}}
+s^\beta\tau_\varepsilon^{-d}+s^\beta(1+\tau_\varepsilon)^{-\frac{\alpha-1+\beta}{2}}\right]\\
 & \le C_2e^{Ls}\varepsilon^{\frac{1}{2}}\|v\|_{X_{T,L}}
 \times\left\{
 \begin{array}{ll}
  I(s,\tau_\epsilon) & \mbox{if}\quad j=0 \quad \mbox{or}\quad j=1,\,\,\tau_\varepsilon\ge 1,\vspace{3pt}\\
  \tau_\epsilon^{-\frac{1}{2}}I(s,\tau_\epsilon) & \mbox{if}\quad j=1,\,\,0<\tau_\varepsilon<1,
\end{array}
\right.
\end{split}
\end{equation}
for $L>0$, $0<s<t$, where $\tau_\varepsilon=\varepsilon^{-1}(t-s)$ and $h$ is as in \eqref{eq:2.1}.
By \eqref{eq:3.7}, \eqref{eq:3.15} and \eqref{eq:3.14} we have 
\begin{equation}
\label{eq:3.16}
\begin{split}
\|\tilde{D}_\varepsilon[v](t)\|_{L^\infty} 
& \le\int_0^t\|S_1(\tau_\varepsilon)F_2[v](s)\|_{L^\infty}\,ds\\
 & \le C_2\varepsilon^{\frac{1}{2}}\|v\|_{X_{T,L}}\int_0^t 
 e^{Ls}I(s,\tau_\varepsilon)\,ds\le C_2\varepsilon^{\frac{\alpha}{2}}\|v\|_{X_{T,L}}\int_0^t e^{Ls}I(s,t-s)\,ds
\end{split}
\end{equation}
for $t>0$, $L>0$ and $\varepsilon\in(0,1)$.
This implies that
\begin{equation*}
\begin{split}
  e^{-Lt}\left(1+(\varepsilon^{-1}t)^{\frac{\alpha}{2}}\right)\|\tilde{D}_\varepsilon[v](t)\|_{L^\infty}
 & \le C_2e^{-Lt}\left(1+(\varepsilon^{-1}t)^{\frac{\alpha}{2}}\right)\varepsilon^{\frac{\alpha}{2}}\|v\|_{X_{T,L}}\int_0^t e^{Ls}I(s,t-s)\,ds\\
 & \le C_2\|v\|_{X_{T,L}}e^{-Lt}\left(1+t^{\frac{\alpha}{2}}\right)\int_0^t e^{Ls}I(s,t-s)\,ds
\end{split}
\end{equation*}
for $t>0$, $L>0$ and $\varepsilon\in(0,1)$. 
Then, by Lemma~\ref{Lemma:3.3} with \eqref{eq:3.13}, 
taking a sufficiently large $L\ge 1$ if necessary, we obtain 
\begin{equation}
\label{eq:3.17}
\sup_{0<t<T}
e^{-Lt}\left(1+(\varepsilon^{-1}t)^{\frac{\alpha}{2}}\right)\|\tilde{D}_\varepsilon[v](t)\|_{L^\infty}
\le\frac{1}{4}\|v\|_{X_{T,L}}
\end{equation}
for $0<\varepsilon<1$. 
Similarly to \eqref{eq:3.16}, it follows from \eqref{eq:3.14} and \eqref{eq:3.15} that
\begin{equation*}
\begin{split}
 & \|\nabla\tilde{D}_\varepsilon[v](t)\|_{L^\infty}
 \le\biggr(\int_0^{\max\{t-\varepsilon,0\}}+\int_{\max\{t-\varepsilon,0\}}^t\biggr)
 \|\nabla S_1(\tau_\varepsilon)F_2[v](s)\|_{L^\infty}\,ds\\
 & \le C_2\varepsilon^{\frac{1}{2}}\|v\|_{X_{T,L}}
 \left\{\int_0^{\max\{t-\varepsilon,0\}} e^{Ls}I(s,\tau_\varepsilon)\,ds
 +\int_{\max\{t-\varepsilon,0\}}^t e^{Ls}\tau_\varepsilon^{-\frac{1}{2}}I(s,\tau_\varepsilon)\,ds\right\}\\
 & \le C_2\|v\|_{X_{T,L}}
 \left\{\varepsilon^{\frac{\alpha}{2}}\int_0^t e^{Ls}I(s,t-s)\,ds
 +\varepsilon^{\frac{\alpha+1}{2}}\int_0^t e^{Ls}(t-s)^{-\frac{1}{2}}I(s,t-s)\,ds\right\}
\end{split}
\end{equation*}
for $0<t<T$ and $L>0$. 
Then we have 
\begin{equation*}
\begin{split}
 & e^{-Lt}(\varepsilon^{-1}t)^{\frac{1}{2}}\left(1+(\varepsilon^{-1}t)^{\frac{\alpha-1}{2}}\right)
 \|\nabla\tilde{D}_\varepsilon[v](t)\|_{L^\infty}\\
 & \le C_2\|v\|_{X_{T,L}}e^{-Lt}(t^{\frac{1}{2}}+t^{\frac{\alpha}{2}})
\left\{\int_0^t e^{Ls}I(s,t-s)\,ds+\int_0^t e^{Ls}(t-s)^{-\frac{1}{2}}I(s,t-s)\,ds\right\}
\end{split}
\end{equation*}
for $t>0$, $L>0$ and $0<\varepsilon<1$.
Similarly to \eqref{eq:3.17}, by Lemma~\ref{Lemma:3.3}, we obtain
\begin{equation}
\label{eq:3.18}
\sup_{0<t<T}
e^{-Lt}(\varepsilon^{-1}t)^{\frac{1}{2}}\left(1+(\varepsilon^{-1}t)^{\frac{\alpha-1}{2}}\right)
\|\nabla\tilde{D}_\varepsilon[v](t)\|_{L^\infty}
\le\frac{1}{4}\|v\|_{X_{T,L}}
\end{equation}
for $0<\varepsilon<1$ and sufficiently large $L\ge1$. 
Combining \eqref{eq:3.17} and \eqref{eq:3.18}, we deduce that in this case 
$$
\|\tilde{D}_\varepsilon[v]\|_{X_{T,L}}\le\frac{1}{2}\|v\|_{X_{T,L}}
$$
for $0<\varepsilon<1$. Thus \eqref{eq:3.8} holds. 
On the other hand, for $v\in X_{T,L}$,
it follows from \eqref{eq:1.14} that 
$F_2\in C(\overline\Omega\times(0,T))$.
Then, applying the parabolic regularity theorem (see e.g. \cite{LSU}, cf. proof of Lemma~\ref{Lemma:2.4}),
we deduce that
$$
\nabla^j\tilde{D}_\varepsilon[v]\in C^\infty(\Omega\times(\tau,T))\cap
BC(\overline\Omega\times(\tau,T))
$$
for any $0<\tau<T$ and $j\in\{0,1\}$.
Therefore we complete the proof of Lemma~\ref{Lemma:3.2}.
\end{proof}

Now we are ready to prove Theorem~\ref{Theorem:1.5}.
\begin{proof}[\textbf{
 Proof of Theorem~\ref{Theorem:1.5}}]
It follows from \eqref{eq:1.10}, \eqref{eq:1.15}, \eqref{eq:2.4} and \eqref{eq:1.17} that 
$$
|\Phi(x)|\le |\varphi(x)|+|[S_2(0)\varphi_b(x)]|\le |x|^{-(N-2)}(M+|\varphi_b|_{L^\infty})
\le |x|^{-\alpha}(M+|\varphi_b|_{L^\infty})
$$
for all $x\in\overline\Omega$. 
Then, by $(G_2)$ we find $c_*>0$ such that 
$$
\|\nabla^j S_1(t)\Phi\|_{L^\infty}\le c_*(M+|\varphi_b|_{L^\infty})(1+t)^{-\frac{\alpha}{2}}h(t)^j
$$
for $t>0$ and $j\in\{0,1\}$ and with $h$ from \eqref{eq:2.1}. 
Let $0<T<\infty$ and $L\ge 1$. Then 
\begin{equation}
\label{eq:3.19}
\begin{split}
 & E_\varepsilon[S_1(\varepsilon^{-1}t)\Phi](t)\\
 & \le c_*(M+|\varphi_b|_{L^\infty})
 \left(1+(\varepsilon^{-1}t)^{\frac{\alpha}{2}}\right)(1+\varepsilon^{-1}t)^{-\frac{\alpha}{2}}\\
 & \qquad
 +c_*(M+|\varphi_b|_{L^\infty})(\varepsilon^{-1}t)^{\frac{1}{2}}\left(1+(\varepsilon^{-1}t)^{\frac{\alpha-1}{2}}\right)
(1+\varepsilon^{-1}t)^{-\frac{\alpha}{2}}h(\varepsilon^{-1}t)\\
 & \le 4c_*(M+|\varphi_b|_{L^\infty})
\end{split}
\end{equation}
for $t>0$ and $0<\varepsilon<1$. 
Furthermore, by Lemma~\ref{Lemma:3.2}, 
taking a sufficiently large $L\ge 1$ if necessary, 
we see that 
\begin{equation}
\label{eq:3.20}
\|\tilde{D}_\varepsilon[v]\|_{X_{T,L}}\le\frac12\|v\|_{X_{T,L}},\qquad v\in X_{T,L},
\end{equation}
for $0<t<T$ and $0<\varepsilon<1$.
For this choice of $L$, on the other hand, by Lemma~\ref{Lemma:2.4} we find $C_L>0$ such that 
\begin{equation}
\label{eq:3.21}
\begin{split}
 & e^{-Lt}E_\varepsilon[D_\varepsilon[\varphi_b]](t)\\
 & \le C|\varphi_b|_{C^{1,\theta}}e^{-Lt}
 \left(1+(\varepsilon^{-1}t)^{\frac{\alpha}{2}}\right)\varepsilon^{\frac{\alpha}{2}}t^{\frac{2-\alpha}{2}}\\
 & \qquad
 +C|\varphi_b|_{C^{1,\theta}}e^{-Lt}
 (\varepsilon^{-1}t)^{\frac{1}{2}}\left(1+(\varepsilon^{-1}t)^{\frac{\alpha-1}{2}}\right)
\varepsilon^{\frac{\alpha}{2}}t^{\frac{2-\alpha}{2}}h(\varepsilon^{-1}t)
 \le C_L|\varphi_b|_{C^{1,\theta}}
\end{split}
\end{equation}
for $t>0$ and $0<\varepsilon<1$. 
Set 
\begin{equation}
\label{eq:3.22}
m:=2\bigg\{4c_*(M+|\varphi_b|_{L^\infty})+C_L|\varphi_b|_{C^{1,\theta}}\bigg\}.
\end{equation}
We deduce from \eqref{eq:3.1}, \eqref{eq:3.19}, \eqref{eq:3.21}, \eqref{eq:3.20} and \eqref{eq:3.22} that 
\begin{equation}
\label{eq:3.23}
\begin{split}
 & \|Q_\varepsilon[v]\|_{X_{T,L}}\\
 & \le\sup_{0<t<T}e^{-Lt}E_\varepsilon[S_1(\varepsilon^{-1}t)\Phi](t)
+\sup_{0<t<T}e^{-Lt}E_\varepsilon[D_\varepsilon[\varphi_b]](t)
+\|\tilde{D}_\varepsilon[v]\|_{X_{T,L}}\\
 & 
\le 4c_*(M+|\varphi_b|_{L^\infty})+C_L|\varphi_b|_{C^{1,\theta}}
+\frac{1}{2}\|v\|_{X_{T,L}}
\le m
\end{split}
\end{equation}
for $v\in X_{T,L}$ with $\|v\|_{X_{T,L}}\le m$ and $0<\varepsilon<1$.
Similarly, we deduce from \eqref{eq:3.20} that
\begin{equation}
\label{eq:3.24}
\left\|Q_\varepsilon[v_1]-Q_\varepsilon[v_2]\right\|_{X_{T,L}}
=\|\tilde{D}_\varepsilon[v_1-v_2]\|_{X_{T,L}}
\le\frac{1}{2}\|v_1-v_2\|_{X_{T,L}}
\end{equation}
for $v_1, v_2\in X_{T,L}$. 
By \eqref{eq:3.23} and \eqref{eq:3.24} 
applying the contraction mapping theorem, for every $\varepsilon \in(0,1)$
we find a unique $v_\varepsilon\in X_{T,L}$ with $\|v_\varepsilon\|_{X_{T,L}}\le m$ 
such that 
$$
v_\varepsilon=Q_\varepsilon[v_\varepsilon]
=S_1(\varepsilon^{-1}t)\Phi
-D_\varepsilon[\varphi_b](t)
+\tilde{D}_\varepsilon[v_\varepsilon](t)
\quad\mbox{in}\quad X_{T,L}.
$$
In particular, it follows from \eqref{eq:3.23} and \eqref{eq:3.22} that with some $C>0$
$$
\|v_\varepsilon\|_{X_{T,L}}\le C(|\varphi_b|_{C^{1,\theta}}+M)\qquad\text{for every } \varepsilon \in(0,1).
$$
Furthermore, 
by $(G_3)$ and Lemmata~\ref{Lemma:2.4},~\ref{Lemma:3.2}
we see that 
$$
\nabla^jv_\varepsilon\in C^\infty(\Omega\times(T_1,T_*))\cap BC(\overline\Omega\times(T_1,T_*))
$$
for any $0<T_1<T_*$ and $j\in\{0,1\}$.

On the other hand, set
$$
w_\varepsilon(x,t)=[S_2(t)\varphi_b](x)+\int_0^t[S_2(t-s)\partial_\nu v_\varepsilon(s)](x)\, ds
$$
for $x\in\overline\Omega$, $t\in(0,T)$ and $\varepsilon \in(0,1)$. 
By \eqref{eq:2.4} and \eqref{eq:3.22} we have that with some $C>0$ 
\begin{equation}
\label{eq:3.25}
\begin{split}
\|w_\varepsilon(t)\|_{L^\infty}
 & \le \|S_2(t)\varphi_b\|_{L^\infty}+\int_0^t\|S_2(t-s)\partial_\nu v_\varepsilon(s)\|_{L^\infty}\,ds\\
 & \le |\varphi_b|_{L^\infty}+\int_0^t|\nabla v_\varepsilon(s)|_{L^\infty}\,ds\\
 & \le \frac{m}{8c_*}+\int_0^t(\varepsilon^{-1}s)^{-\frac{1}{2}}
 \left(1+(\varepsilon^{-1}s)^{\frac{\alpha-1}{2}}\right)^{-1}e^{Ls}\|v_\varepsilon\|_{X_{T,L}}\, ds\\ 
 & \le \frac{m}{8c_*}+\varepsilon^{\frac{\alpha}{2}}e^{LT}m\int_0^ts^{-\frac{\alpha}{2}}\, ds
  \le C(|\varphi_b|_{C^{1,\theta}}+M)<\infty
\end{split}
\end{equation}
for $t\in(0,T)$ and $0<\varepsilon<1$.
Furthermore, 
it follows from \eqref{eq:1.9}, \eqref{eq:1.13} and \eqref{eq:1.14} that
$$
\partial_t w(x,t)=F_1[\varphi_b](x,t)+F_2[v](x,t).
$$
Then,
applying similar arguments as in Lemmata~\ref{Lemma:2.3} and~\ref{Lemma:3.1},
we see that 
$$
\partial_t w\in BC(\overline\Omega\times(T_1,T))
$$
for $0<T_1<T$.
This together with \eqref{eq:2.3} and \eqref{eq:3.25} implies that
$$
\partial_t^\ell\nabla^jw_\varepsilon
\in C^\infty(\Omega\times(T_1,T))\cap BC(\overline\Omega\times(T_1,T))
$$ 
for $0<T_1<T$ and $0\le \ell+j\le 1$.
Therefore we deduce that 
$(v_\varepsilon,w_\varepsilon)$ is a solution of \eqref{eq:1.12} in $\Omega\times(0,T)$. 

Let $(\tilde{v}_\varepsilon,\tilde{w}_\varepsilon)$ be a global-in-time solution of \eqref{eq:1.12} 
satisfying \eqref{eq:1.16}.
Since 
$$
v_\varepsilon-\tilde{v}_\varepsilon
=Q_\varepsilon[v_\varepsilon]-Q_\varepsilon[\tilde{v}_\varepsilon]
=\tilde{D}_\varepsilon[v_\varepsilon-\tilde{v}_\varepsilon]\quad\mbox{in}\quad X_{T,L},
$$
by \eqref{eq:3.8} we have 
$$
\|v_\varepsilon-\tilde{v}_\varepsilon\|_{X_{T,L}}\le\frac{1}{2}\|v_\varepsilon-\tilde{v}_\varepsilon\|_{X_{T,L}}.
$$
This implies that $v_\varepsilon=\tilde{v}_\varepsilon$ in $X_{T,L}$. 
Therefore we deduce that $(v_\varepsilon,w_\varepsilon)$ is a unique solution of \eqref{eq:1.12} 
satisfying \eqref{eq:1.16}.

It remains to prove assertions~(i) and (ii).
Assertion~(i) and \eqref{eq:1.18} immediately follow from $\|v_\varepsilon\|_{X_{T,L}}\le m$, \eqref{eq:3.22} and \eqref{eq:3.25}.  
On the other hand,
by \eqref{eq:3.25} we have
\begin{equation*}
\begin{split}
&
\|w_\varepsilon(t)-S_2(t)\varphi_b\|_{L^\infty}
\le
\int_0^t\|S_2(t-s)\partial_{x_N}v_\varepsilon(s)\|_{L^\infty}\, ds
\\
&\quad
\le
\int_0^t|\nabla v_\varepsilon(s)|_{L^\infty}\,ds
\le C\|v_\varepsilon\|_{X_{T,L}}\varepsilon^{\frac{\alpha}{2}}(1+T^{\frac{2-\alpha}{2}}e^{LT})
\end{split}
\end{equation*}
for all $t\in(0,T)$.
This implies \eqref{eq:1.19}.
Thus assertion~(ii) follows,
and the proof of Theorem~\ref{Theorem:1.5} is complete.
\end{proof}

\noindent
\begin{proof}[\textbf{Proof of Corollary~\ref{Corollary:1.1}.}]
Corollary~\ref{Corollary:1.1} immediately follows from Theorem~\ref{Theorem:1.5} and Definition~\ref{Definition:1.1}.
\end{proof}

%
\section{Estimates from below}
%
\begin{lemma}
\label{Lemma:4.1}
Let $\varepsilon\in(0,1)$, $\varphi_b\equiv0$ and $\varphi$ be nonnegative and satisfy \eqref{eq:1.15}.
Let $z$ be a solution of
\begin{equation}
\label{eq:4.1}
\partial_t z-\Delta z=0\quad\mbox{in $\Omega\times(0,\infty)$},
\qquad
z=0\quad \mbox{on $\partial\Omega\times(0,\infty)$},
\qquad
z(\cdot,0)=\varphi\quad \mbox{in $\Omega$}.
\end{equation}
Set $\underline{u}_\varepsilon(x,t)=z(x, \varepsilon^{-1}t)$ for $(x,t)\in\Omega\times[0,\infty)$.
Then the solution $u_\varepsilon$ of \eqref{eq:1.1} satisfies
\begin{equation}
\label{eq:4.2}
u_\varepsilon(x,t)\ge\underline{u}_\varepsilon(x,t),\qquad (x,t)\in\Omega\times(0,\infty).
\end{equation}
\end{lemma}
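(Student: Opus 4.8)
The natural strategy is a comparison argument. The function $\underline{u}_\varepsilon(x,t) = z(x,\varepsilon^{-1}t)$ solves the rescaled heat equation $\varepsilon\partial_t\underline{u}_\varepsilon - \Delta\underline{u}_\varepsilon = 0$ in $\Omega\times(0,\infty)$ (since $\partial_t z - \Delta z = 0$), vanishes on $\partial\Omega\times(0,\infty)$, and has initial data $\varphi = \underline{u}_\varepsilon(\cdot,0)$ in $\Omega$. Thus $\underline{u}_\varepsilon$ matches $u_\varepsilon$ in the PDE in $\Omega$ and in the initial datum in $\Omega$, but on the boundary it has the Dirichlet condition $\underline{u}_\varepsilon = 0$ rather than the dynamical condition $\partial_t u_\varepsilon + \partial_\nu u_\varepsilon = 0$ with $u_\varepsilon(\cdot,0) = \varphi_b \equiv 0$ on $\partial\Omega$. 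I would first record that $\underline{u}_\varepsilon \ge 0$ by the maximum principle (as $\varphi \ge 0$), so that on $\partial\Omega$ its outward normal derivative satisfies $\partial_\nu\underline{u}_\varepsilon \le 0$ (Hopf's lemma), i.e. $\underline{u}_\varepsilon$ is a subsolution of the dynamical boundary condition: $\partial_t\underline{u}_\varepsilon + \partial_\nu\underline{u}_\varepsilon = \partial_\nu\underline{u}_\varepsilon \le 0$ on $\partial\Omega$, with $\underline{u}_\varepsilon(\cdot,0) = 0 = \varphi_b$ on $\partial\Omega$.

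Next I would set $W := u_\varepsilon - \underline{u}_\varepsilon$ and show $W \ge 0$. The function $W$ satisfies $\varepsilon\partial_t W - \Delta W = 0$ in $\Omega\times(0,\infty)$, $W(\cdot,0) = \varphi - \varphi = 0$ in $\Omega$, and on $\partial\Omega\times(0,\infty)$ we have $\partial_t W + \partial_\nu W = -(\partial_t\underline{u}_\varepsilon + \partial_\nu\underline{u}_\varepsilon) = -\partial_\nu\underline{u}_\varepsilon \ge 0$, with $W(\cdot,0) = 0$ on $\partial\Omega$. Both the interior equation, the interior initial data, the boundary (dynamical) "inequality", and the boundary initial data are consistent with $W \ge 0$, so one expects $W \ge 0$ by a comparison principle adapted to the dynamical boundary condition. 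To make this rigorous I would use the Duhamel/integral representation underlying Definition~\ref{Definition:1.1}: one can write $u_\varepsilon$ (resp. $\underline{u}_\varepsilon$) in terms of the positivity-preserving kernels $\Gamma_D$ and $\mathcal K$ and the boundary flux $-\partial_\nu v_\varepsilon$, exploiting that $\Gamma_D \ge 0$, $\mathcal K \ge 0$, and $S_2(t)$ preserves nonnegativity (Lemma~\ref{Lemma:2.2}). Alternatively, and more directly, I would argue by contradiction via a first touching time: if $W$ first becomes negative, it does so either in the interior — excluded by the strong parabolic maximum principle applied on $\{W < 0\}$ — or on $\partial\Omega$ at some $(x_0,t_0)$ where $W(x_0,t_0) = 0$, $W \ge 0$ before $t_0$, so $\partial_t W(x_0,t_0) \le 0$, while Hopf's lemma at a boundary minimum of the spatial profile gives $\partial_\nu W(x_0,t_0) < 0$ unless $W \equiv 0$ near $(x_0,t_0)$; combined with $\partial_t W + \partial_\nu W \ge 0$ this forces $W \equiv 0$ in a neighborhood, contradicting the assumption that $t_0$ is a first touching time of negativity. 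A standard perturbation (subtracting $\delta e^{\lambda t}$ for small $\delta>0$, large $\lambda$) handles the degenerate equality cases.

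The main obstacle is the delicate interplay between the dynamical (non-standard) boundary condition and the maximum principle: the usual parabolic comparison theorems do not directly apply because the boundary condition involves $\partial_t$ on $\partial\Omega$. I expect to spend most effort justifying the comparison step rigorously at the level of regularity provided by Definition~\ref{Definition:1.1} — either by reducing it to monotonicity of the integral operators $S_1(t)$, $S_2(t)$ and the kernel positivity (which is the cleanest route given that the solution is defined through those operators), or by a careful Hopf-lemma argument at a first touching time with the standard $\delta e^{\lambda t}$ regularization. Once $W \ge 0$ is established on $\Omega\times(0,\infty)$, inequality \eqref{eq:4.2} follows immediately.
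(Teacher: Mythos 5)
Your argument is essentially the paper's: you verify that $\underline{u}_\varepsilon$ is a subsolution of \eqref{eq:1.1} (it satisfies $\varepsilon\partial_t\underline{u}_\varepsilon-\Delta\underline{u}_\varepsilon=0$, has the right initial data, and on $\partial\Omega$ one has $\partial_t\underline{u}_\varepsilon+\partial_\nu\underline{u}_\varepsilon=\partial_\nu z(\cdot,\varepsilon^{-1}\cdot)\le 0$ because $z\ge 0$ in $\Omega$ and $z=0$ on $\partial\Omega$ — note that for this sign no Hopf lemma is needed, the one-sided difference quotient suffices), and then conclude by comparison. The only divergence is the last step: the paper does not reprove the comparison principle for dynamical boundary conditions but simply cites von Below and De Coster \cite[Theorem~2.2]{vBD}, whereas you sketch a proof of it. Your touching-time/Hopf sketch is the standard route and would work, but as written it has two loose ends you should be aware of: (i) $\Omega$ is unbounded, so a ``first touching point'' need not exist without using boundedness and decay of $u_\varepsilon-\underline{u}_\varepsilon$ at spatial infinity (or a Phragm\'en--Lindel\"of-type argument) in addition to the $\delta e^{\lambda t}$ perturbation; (ii) the alternative ``kernel positivity'' route is less immediate than you suggest, since the representation of $u_\varepsilon$ in Definition~\ref{Definition:1.1} involves the boundary flux $-\partial_\nu v_\varepsilon$, whose sign is not known a priori, so positivity of $\Gamma_D$ and $\mathcal K$ alone does not yield the comparison. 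Citing the established comparison theorem, as the paper does, avoids both issues.
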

\begin{proof}
By nonnegativity of $z$ in $\Omega$ and the homogeneous Dirichlet boundary condition in \eqref{eq:4.1},
we see that $\partial_\nu z\le 0$ on $\partial\Omega$.
>From \eqref{eq:4.1} we conclude that $\underline u_\varepsilon$ solves
$$
\varepsilon\partial_t\underline u_\varepsilon-\Delta\underline u_\varepsilon =0\quad\mbox{in $\Omega\times(0,\infty)$},
\qquad
\underline u_\varepsilon(\cdot,0)=\varphi\quad\mbox{in $\Omega$}
$$
and
$$
\partial_t\underline u_\varepsilon(x,t)+\partial_\nu\underline u_\varepsilon(x,t)
=\varepsilon^{-1}\partial_tz(x,\varepsilon^{-1}t)+\partial_\nu z(x,\varepsilon^{-1}t)
=\partial_\nu z(x,\varepsilon^{-1}t)\le 0
$$
on $\partial\Omega\times(0,\infty)$.
Therefore, $\underline u_\varepsilon$ is a subsolution of \eqref{eq:1.1},
while $u_\varepsilon$ is a supersolution,
and applying the comparison principle (see \cite[Theorem~2.2]{vBD}), we obtain \eqref{eq:4.2}.
\end{proof}
\begin{lemma}
\label{Lemma:4.2}
Let $b>1$ and put
\begin{equation}
\label{eq:4.3}
\varphi(x)=|x|^{2-N}\chi_{\{|x|>b\}},\qquad x\in\Omega.
\end{equation}
Then for any compact set $\mathfrak K_*\subset\Omega$ and $\tau>0$
there exists $C>0$ such that the solution $z$ of \eqref{eq:4.1} satisfies
\begin{equation}
\label{eq:4.4}
z(x,t)\ge Ct^{-\frac{N}{2}+1},\qquad x\in\mathfrak K_*,\quad t>\tau.
\end{equation}
\end{lemma}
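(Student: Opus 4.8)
The plan is to prove the lower bound by retaining, in the representation $z(x,t)=\int_{\Omega}\Gamma_D(x,y,t)\varphi(y)\,dy$, only the contribution of a spherical shell whose radius scales like $\sqrt t$. Fix $\mathfrak K_*\subset\Omega$ compact, set $d_*:=\operatorname{dist}(\mathfrak K_*,\partial\Omega)>0$, $R_*:=\max_{x\in\mathfrak K_*}|x|$, and for $t>0$ put $A_t:=\{y:\sqrt t<|y|<2\sqrt t\}$. Once $t>b^2$ we have $A_t\subset\{|y|>b\}\subset\Omega$, so $\varphi\equiv|\cdot|^{2-N}$ on $A_t$, and since $\Gamma_D\ge0$ and $\varphi\ge0$,
\[
z(x,t)\ \ge\ \int_{A_t}\Gamma_D(x,y,t)\,|y|^{2-N}\,dy .
\]
As $\int_{A_t}|y|^{2-N}\,dy=|{\mathbb S}^{N-1}|\int_{\sqrt t}^{2\sqrt t}r\,dr=\tfrac32|{\mathbb S}^{N-1}|\,t$, everything reduces to the heat kernel bound
\begin{equation}
\label{eq:plan-hk}
\Gamma_D(x,y,t)\ \ge\ c_1\,t^{-N/2},\qquad x\in\mathfrak K_*,\ \ y\in A_t,\ \ t\ge t_1,
\end{equation}
with $c_1>0$ and $t_1$ depending only on $N,d_*,R_*,b$; indeed \eqref{eq:plan-hk} then gives $z(x,t)\ge\tfrac32 c_1|{\mathbb S}^{N-1}|\,t^{1-N/2}$ for $t\ge t_1$.

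To establish \eqref{eq:plan-hk} I would combine transience with an interior Harnack argument (it is also an immediate consequence of the known two‑sided Gaussian estimates for the Dirichlet heat kernel in the exterior of a compact set). Since $N\ge3$, the diffusion generated by $\Delta$ is transient and hits $\partial\Omega$ from $x$ with probability $|x|^{2-N}$, so the survival probability obeys
\[
\int_{\Omega}\Gamma_D(x,y,t)\,dy\ \ge\ 1-|x|^{2-N}\ \ge\ 1-(1+d_*)^{2-N}=:c_0>0\qquad(x\in\mathfrak K_*,\ t>0).
\]
Using $\Gamma_D\le G$ with $G$ the free Gaussian kernel, a crude volume bound on $\{|y|\le\sqrt s/M\}$ and a Gaussian tail bound on $\{|y|\ge M\sqrt s\}$ show that, for $M=M(N,c_0)$ large and $s$ large, the $\Gamma_D(x,\cdot,s)$‑mass of $\{|y|\le\sqrt s/M\}\cup\{|y|\ge M\sqrt s\}$ is at most $c_0/2$; combined with the survival bound this yields $\int_{\{\sqrt s/M<|y|<M\sqrt s\}}\Gamma_D(x,y,s)\,dy\ge c_0/2$. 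Taking $s=t/2$ and using that this shell has Lebesgue measure $\le CM^N t^{N/2}$, we get a point $z_0=z_0(x,t)$ with $\sqrt t/(2M)<|z_0|<M\sqrt t$ and $\Gamma_D(x,z_0,t/2)\ge c_3\,t^{-N/2}$. Every point of that shell and every point of $A_t$ lie at distance $\gtrsim_M\sqrt t$ from $\partial\Omega$ for $t$ large, so $(y,s)\mapsto\Gamma_D(x,y,s)$ — a nonnegative solution of the heat equation in the interior region at scale $\sqrt t$ — can be propagated from $(z_0,t/2)$ to $(y,t)$ for any $y\in A_t$ through a number of parabolic cylinders bounded in terms of $M$ only; the interior parabolic Harnack inequality (see \cite{LSU}) along this chain gives $\Gamma_D(x,y,t)\ge c_1\,t^{-N/2}$, i.e.\ \eqref{eq:plan-hk}.

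Finally, \eqref{eq:plan-hk} yields \eqref{eq:4.4} for $t\ge t_1$. For $\tau<t\le t_1$ one uses that $z(x,t)=[S_1(t)\varphi](x)$ is continuous on $\overline\Omega\times(\tau,\infty)$ (smoothing of the Dirichlet heat semigroup, $\varphi\in L^\infty$) and strictly positive there, since $\varphi\ge0$, $\varphi\not\equiv0$ and $\Gamma_D>0$ on $\Omega\times\Omega\times(0,\infty)$; hence $m:=\min_{\mathfrak K_*\times[\tau,t_1]}z>0$, and as $t\mapsto t^{1-N/2}$ is nonincreasing, $z(x,t)\ge m\ge m\tau^{N/2-1}t^{1-N/2}$ on $\mathfrak K_*\times[\tau,t_1]$. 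Then $C:=\min\{\tfrac32 c_1|{\mathbb S}^{N-1}|,\,m\tau^{N/2-1}\}>0$ works. The main obstacle is \eqref{eq:plan-hk}: because $x$ stays at bounded distance from $\partial\Omega$, one must not try to reach it from the bulk by a Harnack chain (that would cost a spurious power of $t$, the domain being "thin" near $\partial\Omega$ at scale $\sqrt t$); instead the role of $x$ is encoded through the single scale‑free quantity $\int_{\Omega}\Gamma_D(x,\cdot,t)$, bounded below by transience alone, and Harnack is used only in the interior.
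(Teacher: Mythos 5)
Your proof is correct, and at its core it rests on the same fact as the paper's: a pointwise lower bound for $\Gamma_D$ of on-diagonal size $t^{-N/2}$ away from $\partial\Omega$, integrated against $\varphi$ over a region of scale $\sqrt t$ where $\int r\,dr$ produces the factor $t$. The paper gets there in one step by quoting the two-sided Gaussian estimate of Grigor'yan--Saloff-Coste (\cite{GS}, Theorem 1.1) for $|x|,|y|>a>1$ and integrating over all of $\{|y|>b\}$ with the crude bound $|x-r\omega|\le(1+\beta)r$; the resulting estimate $z(x,t)\ge C t^{1-N/2}\exp(-c/t)$ holds for \emph{all} $t>0$, so no case distinction in $t$ is needed. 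You instead restrict to the annulus $|y|\sim\sqrt t$, where only the weaker bound $\Gamma_D(x,y,t)\ge c\,t^{-N/2}$ is required, and you offer a genuinely different, self-contained derivation of that bound: transience gives a uniform lower bound on the surviving mass $\int_\Omega\Gamma_D(x,\cdot,t)$, a Gaussian upper bound plus pigeonhole localizes a point $z_0$ with $\Gamma_D(x,z_0,t/2)\gtrsim t^{-N/2}$ in the shell $|y|\sim\sqrt t$, and an interior parabolic Harnack chain (of length depending only on $M$ after parabolic rescaling) propagates this to all of $A_t$ — correctly keeping Harnack away from the boundary and letting the survival probability, not a chain to $x$, encode the dependence on $x$. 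What your route buys is independence from the rather deep input \cite{GS}; what it costs is the Harnack-chain step, which is only sketched (the uniformity in $t$ of the chain length and of the time increments should be spelled out), and the separate treatment of $\tau<t\le t_1$ by compactness and strict positivity of $z$, which the paper's version avoids. Both arguments are sound.
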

\begin{proof} 
Let $\mathfrak K_*$ be a compact set in $\Omega$.
Then we can take $a\in(1,b)$ and $\beta>0$ such that $a<|x|\le\beta b$ for all $x\in\mathfrak K_*$.
Since it follows form \cite[Theorem~1.1]{GS} that there are $C_1>0$ and $C_2>0$ such that
$$
\Gamma_D(x,y,t)\ge C_1t^{\frac{N}{2}}\exp\bigg(-C_2\frac{|x-y|^2}{t}\bigg)
$$
for all $x,y\in\Omega$ with $|x|>a$, $|y|>a$ and $t>0$,
by \eqref{eq:4.3} we have $C_3>0$ satisfying
\begin{align*}
z(x,t)=\int_\Omega\Gamma_D(x,y,t)\varphi(y)\,dy
&
=\int_b^\infty\int_{\mathbb S^{N-1}}r^{2-N}\Gamma_D(x,r\omega,t)r^{N-1}\,d\omega\,dr
\\
&
\ge
C_1\int_b^\infty\int_{\mathbb S^{N-1}}rt^{-\frac{N}{2}}\exp\bigg(-C_2\frac{|x-r\omega|^2}{t}\bigg)\,d\omega\,dr
\\
&
\ge
C_1|\mathbb{S}^{N-1}|t^{-\frac{N}{2}}\int_b^\infty r\exp\bigg(-C_2\frac{(1+\beta)^2r^2}{t}\bigg)\,dr
\\
&
=
C_3t^{-\frac{N}{2}+1}(1+\beta)^{-2}\exp\bigg(-C_2\frac{(1+\beta)^2b^2}{t}\bigg)
\end{align*}
for all $x\in\Omega$ with $a<|x|\le\beta b$ and $t>0$.
This implies \eqref{eq:4.4}, thus Lemma~\ref{Lemma:4.2} follows.
\end{proof}

Now we are ready to prove Theorem~\ref{Theorem:1.6}.
\begin{proof}[\textbf{Proof of Theorem~\ref{Theorem:1.6}}]
Let $\mathfrak K$ be a compact set in $\Omega$ such that
$\mathfrak K\subset\mathfrak K_*\times[t_1,t_2]$ for some compact set $\mathfrak K_*\subset\Omega$ 
and $0<t_1<t_2<\infty$,
and let $\varphi$ be as in Lemma~\ref{Lemma:4.2}.
Then, applying Lemma~\ref{Lemma:4.1} and Lemma~\ref{Lemma:4.2} to $\tau:=t_1$, 
we see that there exists $C_*>0$ such that
$$
u_\varepsilon(x,t)\ge z(x,\varepsilon^{-1}t)\ge C_*(\varepsilon^{-1}t)^{-\frac{N}{2}+1}
\ge C_*t_2^{-\frac{N}{2}+1}\varepsilon^{\frac{N}{2}-1}
$$
for all $(x,t)\in\mathfrak K$.
This implies \eqref{eq:1.20}, and the proof of Theorem~\ref{Theorem:1.6} is complete.
\end{proof}

\noindent
{\bf Acknowledgment.}
The first author was supported in part by the Slovak Research and Development Agency 
under the contract No. APVV-18-0308 and by VEGA grant 1/0347/18.
The second and third authors of this paper were supported in part 
by JSPS KAKENHI Grant Numbers JP 19H05599.
The third author was also supported in part by JSPS KAKENHI Grant Numbers JP 16K17629 
and JP 20K03689.


\end{document}